\documentclass{amsart}
\usepackage[margin = 1in]{geometry}

\usepackage{amsmath,amsthm,amssymb}
\usepackage{mathtools}
\usepackage{mathrsfs}
\usepackage{stmaryrd}
\usepackage{enumitem}
\usepackage{hyperref}
\usepackage{multicol}
\usepackage{mathdots}
\allowdisplaybreaks

\usepackage{tikz-cd}
\usepackage{caption}
\usepackage{subcaption}
\usetikzlibrary{decorations.markings,arrows,quotes,angles,positioning,calc}
\tikzset{vertex/.style={circle, minimum size=1pt, inner sep=1pt, fill}}

\newcommand{\C}{{\mathbb{C}}}
\newcommand{\R}{{\mathbb{R}}}
\newcommand{\Z}{{\mathbb{Z}}}
\newcommand{\N}{{\mathbb{N}}}

\renewcommand{\P}{{\mathcal{P}}}
\newcommand{\Pw}{{\P_w}}

\renewcommand{\t}{{\mathfrak{t}}}

\newcommand{\ui}{{\underline{i}}}
\newcommand{\uj}{{\underline{j}}}

\newcommand{\omu}{{\overline{\mu}}}
\newcommand{\olambda}{{\overline{\lambda}}}
\newcommand{\ogamma}{{\overline{\gamma}}}
\newcommand{\ow}{{\overline{w}}}

\newcommand{\trop}{{\mathrm{trop}}}

\newtheorem{theorem}{Theorem}[section]
\newtheorem{lemma}[theorem]{Lemma}
\newtheorem{proposition}[theorem]{Proposition}
\newtheorem{corollary}[theorem]{Corollary}

\theoremstyle{definition}
\newtheorem{definition}[theorem]{Definition}
\newtheorem{question}[theorem]{Question}
\newtheorem{example}[theorem]{Example}
\newtheorem{remark}[theorem]{Remark}

\newtheorem{claim}{Claim}
\newenvironment{claimproof}[1]{\par\noindent\underline{Proof:}\space#1}{\hfill $\blacksquare$}

\title{Lower affine MV polytopes of rank 2}
\author{Kathlyn Dykes}
\address{Tutte Institute of Mathematics and Computing}
\email{\texttt{kathlyndykes@gmail.com}}
\date{}

\begin{document}

\begin{abstract}
When $G$ is a complex reductive algebraic group, MV polytopes are in bijection with the non-negative tropical points of the unipotent group of $G$. In this paper, we prove a similar theorem for certain subclasses of rank 2 affine MV polytopes.  

For the Kac-Moody group $\widehat{SL_2}$, an affine MV polytopes splits into three subpolytopes: a lower, a middle and an upper polytope. The lower polytopes are natural generalizations of finite-type polytopes with highest vertex labelled by an arbitrary Weyl element. We extend the known results from this subclass of finite-type MV polytopes to the case of lower affine MV polytopes of rank 2. We prove that for an element $w$ of the affine Weyl group, the class of lower affine MV polytopes with highest vertex $w$ are in bijection with the non-negative tropical points of the reduced double Bruhat cell labelled by $w^{-1}$. To do this, we describe the BZ data of a rank 2 affine MV polytope and show that certain generalized minor functions satisfy the conditions of a lower polytope. As any upper polytope is a reflection of some lower polytope, a analogous result will hold for the class of upper affine MV polytopes of rank 2. 
\end{abstract}

\maketitle

\tableofcontents

\section{Introduction}

For $G$ a complex reductive algebraic group, the representations of the Langlands dual group $G^\vee$ are intricately related to the intersection homology of the affine Grassmanian by the geometric Satake correspondence from \cite{MirkovicVilonen2007}. By this correspondence, Mirkov\'{i}c and Vilonen define a canonical basis for each representation using certain subvarieties of the affine Grassmanian, called Mirkov\'{i}c-Vilonen (MV) cycles. To better understand these varieties, Anderson \cite{PolytopeCalc}  introduced the idea of studying their moment polytopes. In \cite{MVpolytopes}, Kamnitzer gives a complete description of these polytopes, called MV polytopes, providing a combinatorial framework to better understand MV cycles and the corresponding bases. 


When $G$ is an affine Kac-Moody group, the representation theory of $G$ is very similar to that of an algebraic group, so there is hope that the geometric theory of MV cycles will extend. 
Unfortunately, in this case the affine Grassmanian can no longer be viewed as a geometric object and the standard definition of an MV cycle is not well defined. In \cite{CoulombBranchesof}, Braverman, Finkelberg and Nakajima conjecture that geometric spaces called Coulomb branches will replace the affine Grassmanian in the generalization of the geometric Satake correspondence to the affine case. There is also an analogue of MV cycles, called double MV cycles, introduced by Muthiah \cite{DoubleMVcyles}. 

On the other hand, affine MV polytopes are well understood. They have been defined using preprojective algebras \cite{AffineMVpolytopes}, KLR algebras \cite{MVpolytopesandKLR} and affine PBW bases \cite{AffinePBWBases}. As in the finite case, affine MV polytopes have a purely combinatorial definition. The motivation for this paper is to contribute to the work of realizing affine MV polytopes as the tropical points of some variety, as \cite{Geometryofcanonical} had done for the finite case. More specifically, we would like to answer the following question. 
\begin{question}\label{question:1}
 When we take $G$ to be an affine Kac-Moody group, can we generalize the results of the finite case to find a bijection between the tropical points of a variety and affine MV polytopes?
\end{question}

We simplify this question to the case where $G = \widehat{SL_2}$ so that we only consider affine MV polytopes of rank 2. As Baumann, Kamnitzer and Tingley proved that affine MV polytopes are characterized by the fact that their 2-faces are rank 2 MV polytopes  \cite{AffineMVpolytopes}, an answer to this question for rank 2 affine MV polytopes will be the first step towards an understanding of the general case.

Even by considering only 2-dimensional affine MV polytopes, this question is complicated, as these polytopes have a decoration. Instead, we can split an affine MV polytope into three polytopes: a lower, a middle and an upper polytope. The middle polytope is four sided and comes with a decoration. Meanwhile the upper and lower polytopes are closely related to the subsets of MV polytopes studied in \cite{MVBruhat} and are a natural generalization of the finite case.

In this paper, we will answer a simplified version of Question \ref{question:1} by considering only upper and lower affine polytopes. First, we recall affine MV polytopes in Section \ref{section:background}. In Section \ref{section:BZdata}, we describe the BZ data of lower affine MV polytopes. In Section \ref{section:tropicalgeometry}, we describe the topical geometry of reduced double Bruhat cells and prove that the tropical points of these cells correspond to these lower affine MV polytopes. 

\section{Background}\label{section:background}

We consider the affine Kac Moody group of type $A_1^{(1)}$, denoted by $G = \widehat{SL_2}$, and its corresponding affine Lie algebra $\widehat{sl_2}$. Let $\alpha_0, \alpha_1$ be the simple roots where $\alpha_1$ is the root for $SL_2$ and $\alpha_0$ is the affine root. Let $\alpha^\vee_0, \alpha^\vee_1$ be the simple coroots, set $\delta = \alpha_0^\vee + \alpha_1^\vee$  and denote $\t_\R = \text{span}_{\R}\{\alpha_0^\vee, \alpha_1^\vee\}$. Let $\t_\R^*$ be the dual space and let $\langle \cdot, \cdot \rangle: \t_\R \times \t_\R^* \rightarrow \R$ be the natural pairing. Let $\omega_0, \omega_1$ be the fundamental weights, i.e. $\langle \alpha_i^\vee,\omega_j \rangle = \delta_{i,j}$.  Denote the set of roots by $\Delta$ and the coroots by $\Delta^\vee$. The set of positive coroots are $\Delta^\vee_+ = \{ \alpha^\vee_0 + i \delta, \alpha^\vee_1 + i \delta, (i+1) \delta : i \in \mathbb{Z}_{\geq0}\}$ while the set of negative coroots are $\Delta^\vee_- = - \Delta^\vee_+$. It follows that $\Delta^\vee = \Delta^\vee_+ \cup \Delta^\vee_-$.

Let $W$ be the affine Weyl group associated to $G$. The Weyl group can be viewed as the reflection group on the coroots generated by the simple reflections, $s_1, s_2$. These reflections act on the coroots by:
\begin{align*}
s_0(\alpha^\vee_0) = - \alpha^\vee_0, && s_0(\alpha^\vee_1) = \alpha^\vee_0 + \delta, && s_1(\alpha^\vee_0) = \alpha^\vee_1 + \delta, && s_1(\alpha^\vee_1) = - \alpha_1. 
\end{align*}

\begin{definition}
A convex polytope $P$ in span$_\Z\{\alpha^\vee_0, \alpha^\vee_1\}$ is an $\widehat{sl_2}$ \emph{GGMS polytope} if all edges are parallel to coroots in $\Delta^\vee$. 
\end{definition}

We can label the vertices of a GGMS polytope $P$ by $\mu_k, \mu^k, \omu_k, \omu^k$ for $ k \in \mathbb{N}$. For $k \geq 1$, there exists $a_k, a^k, \bar{a}_k, \bar{a}^k \in \N$ such that 
 \begin{align*}
 \mu_k - \mu_{k-1} &= a_k(\alpha^\vee_1 + (k-1)\delta),  & \mu^{k-1} - \mu^k &= a^k (\alpha^\vee_0 + (k-1) \delta)), \\
 \omu_k - \omu_{k-1} &= \bar{a}_k (\alpha^\vee_0 + (k-1) \delta), & \omu^{k-1} - \omu^k & = \bar{a}^k (\alpha^\vee_1 + (k-1)\delta).
 \end{align*}
The limits $\lim_{k \rightarrow \infty} \mu_k,\lim_{k \rightarrow \infty} \mu^k,\lim_{k \rightarrow \infty} \omu_k,\lim_{k \rightarrow \infty} \omu^k$ exist and we denote them by 
\begin{align}\label{equation:polytopelimits}
\lim_{k \rightarrow \infty} \mu_k = \mu_\infty, && \lim_{k \rightarrow \infty} \mu^k =  \mu^\infty,&& \lim_{k \rightarrow \infty} \omu_k =  \omu_\infty,  && \lim_{k \rightarrow \infty} \omu^k = \omu^\infty.
\end{align}  

We define a \emph{decorated GGMS polytope} as a GGMS polytope $P$ along with two sequences of non-negative integers $\lambda = (\lambda_1 \geq \lambda_2, \dots)$ and $\olambda = (\olambda_1 \geq \olambda_2, \dots)$ with  $\mu^\infty - \mu_\infty = |\lambda| \delta$ and $\omu^\infty - \omu_\infty = |\olambda| \delta$. 

We consider the MV polytopes of $P$ as defined in \cite{Rank2affine}:
\begin{definition}[{\cite[Definition 3.4]{Rank2affine}}]\label{definition:affinepolytope}
An $\widehat{sl_2}$ MV polytope $P$ is a decorated GGMS polytope such that:
\begin{enumerate}[label=(\roman*)]
 \item If the lines between $\mu_\infty, \omu_\infty$ and $\mu^\infty, \omu^\infty$ are parallel then $\lambda = \olambda$, otherwise one is obtained from the other by removing a part of size $\langle \mu_\infty - \omu_\infty, \omega_0 + \omega_1 \rangle $,
 \item $\lambda_1, \olambda_1 \leq \langle \mu_\infty - \omu_\infty, \omega_0 + \omega_1 \rangle$,
 \item\label{condition:lower} For $k \geq 2$, $\max \{ \langle \omu_k - \mu_{k-1},\omega_1 \rangle, \langle \mu_k - \omu_{k-1}, \omega_0 \rangle \}=0$,
 \item\label{condition:upper} For $k \geq 2$, $\min \{ \langle \omu^k - \mu^{k-1}, \omega_0 \rangle, \langle \mu^k - \omu^{k-1}, \omega_1 \rangle \}=0$.
\end{enumerate}
\end{definition}

For an example of one of these polytopes, see Figure \ref{polytope}. Note that $\langle \mu_\infty - \omu_\infty, \omega_0 + \omega_1 \rangle$ is the ``width'' of the polytope. 

Due to their importance, we will refer to the conditions \ref{condition:lower} and \ref{condition:upper} as the \emph{lower} and \emph{upper diagonal relations}, respectively. These conditions have a visual interpretation. 
Consider the line in the $\alpha_1^\vee$ direction passing through $\omu_{k-1}$ and the line in the $\alpha_0^\vee$ direction passing through $\mu_{k-1}$. The diagonal relation guarantees that $\mu_k$ is on or below the first line while $\omu_k$ is on or below the second line, with at least one of these vertices on their respective lines. If a vertex is on the diagonal, we call this line an \emph{active diagonal}. In Figure \ref{polytope}, the diagonal relations are shown as the blue lines, where the solid lines are active diagonals and the dotted lines are not active. 

\begin{figure}[ht]
\[
\begin{tikzpicture}
\node[vertex] (O) at (0,0) [label=below:$\mu_0$] {};
\node[vertex] (U1) at (1.2,0.3)[label=below right:$\mu_1$] {};
\node[vertex] (U2) at (3, 1.65)[label=below right:$\mu_2$] {};
\node[vertex] (U3) at (4.2, 3.15)[label=below right:$\mu_3$] {};
\node[vertex] (U4) at (4.8, 4.2) {};
\node[vertex] (V3) at (4.8, 5.4) {};
\node[vertex] (V2) at (3, 7.65)[label=above right:$\mu^2$] {};
\node[vertex] (V1) at (1.8, 8.55)[label=above right:$\mu^1$] {};
\node[vertex] (T) at (-0.6,9.15)[label=above:$\mu^0$] {};
\node[vertex] (Y1) at (-1.8,8.85)[label=above left:$\omu^1$] {};
\node[vertex] (Y2) at (-3.6, 7.5) {};
\node[vertex] (W3) at (-3.6, 2.1) {};
\node[vertex] (W2) at (-3, 1.35)[label=below left:$\omu_2$] {};
\node[vertex] (W1) at (-1.8, 0.45)[label=below left:$\omu_1$] {};

\node at (-5.45, 7.5) {$\omu^\infty = \cdots = \omu^3=\omu^2$};
\node at (-5.45, 2.1) {$\omu_\infty = \cdots = \omu_4=\omu_3$};
\node at (6.2, 5.4) {$\mu^3 = \cdots = \mu^\infty$};
\node at (6.2, 4.2) {$\mu_4 = \cdots = \mu_\infty$};

\draw (Y2) -- (V3);
\draw (W3) -- (U4);

\draw[thick] (O) -- (U1) -- (U2) -- (U3) -- (U4) -- (V3) -- (V2) -- (V1) -- (T) -- (Y1) -- (Y2) -- (W3) -- (W2) -- (W1) -- (O);

\draw[thick, blue] (U1) -- (W2);
\draw[thick, blue] (W1) -- (U2);

\draw[thick, blue] (W2) -- (U3);
\draw[thick, dotted, blue] (U2) -- (-3.6, 3.3);

\draw[thick, blue] (Y1) -- (V2);
\draw[thick, dotted, blue] (Y2) -- (1.6, 8.60);

\end{tikzpicture}
\]
\caption{A rank 2 affine MV polytope}
\label{polytope}
\end{figure}

These polytopes naturally split into three sub-polytopes: a lower polytope, an upper polytope and a middle polytope. For an MV polytope $P$, we can define the lower and upper sub-polytopes of $P$ as
\begin{align*}
L(P) &= \text{conv} \{ \mu_k, \omu_k : k \in \N\}, & U(P) & = \text{conv}\{ \mu^k, \omu^k : k \in \N\}.
\end{align*}
The middle polytope is the convex hull $M(P) = \text{conv}\{ \mu_\infty, \mu^\infty, \omu_\infty, \omu^\infty \}$  along with the decorations $\lambda, \olambda$. 

\begin{lemma}\label{lemma:lumGGMS}
For an MV polytope $P$, the subpolytopes $L(P), U(P)$ and $M(P)$ are GGMS polytopes. 
\end{lemma}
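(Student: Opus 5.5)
The plan is to check directly, from the definition of a GGMS polytope, that each subpolytope is a convex polytope whose edges are all parallel to coroots. Convexity holds by construction, since each is defined as a convex hull. The content of the lemma is therefore the identification of the vertices and edges of $L(P)$, $U(P)$ and $M(P)$. I will use the vertex labelling of $P$. The boundary of $P$ is traced, in cyclic order, as
\[
\mu_0,\mu_1,\mu_2,\dots,\mu_\infty,\ \mu^\infty,\dots,\mu^1,\mu^0=\omu^0,\ \omu^1,\dots,\omu^\infty,\ \omu_\infty,\dots,\omu_1,\omu_0=\mu_0.
\]
Along this boundary the successive edge vectors are $a_k(\alpha_1^\vee+(k-1)\delta)$, $a^k(\alpha_0^\vee+(k-1)\delta)$, and so on. Their slopes are monotone in $k$: the directions $\alpha_1^\vee+(k-1)\delta$ rotate toward $\delta$ as $k\to\infty$, and so do $\alpha_0^\vee+(k-1)\delta$.

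First I will treat $L(P)$. The points $\mu_0,\mu_1,\dots,\mu_\infty$ and $\omu_0,\omu_1,\dots,\omu_\infty$ are vertices, or at worst boundary points, of the convex polygon $P$. They form a connected arc of $\partial P$ running from $\omu_\infty$ through $\mu_0$ to $\mu_\infty$. A standard fact applies here: the convex hull of a boundary arc of a convex polygon is the region bounded by the arc together with the chord joining its endpoints. Hence the boundary of $L(P)$ consists of the edges of $P$ along that arc, which are parallel to $\alpha_1^\vee+(k-1)\delta$ and $\alpha_0^\vee+(k-1)\delta$, together with the single segment from $\omu_\infty$ to $\mu_\infty$. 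The finitely-many-vertices issue is harmless. The limits \eqref{equation:polytopelimits} exist and the $a_k$ are integers, so only finitely many $a_k,\bar a_k$ are nonzero and each arc has finitely many genuine vertices. The argument for $U(P)$ is symmetric: its boundary consists of the upper arc from $\omu^\infty$ through $\mu^0$ to $\mu^\infty$, plus the segment from $\omu^\infty$ to $\mu^\infty$. Similarly, $M(P)$ is the quadrilateral bounded by the two chords just described and by the two sides $[\mu_\infty,\mu^\infty]$ and $[\omu_\infty,\omu^\infty]$ of $P$. These two sides are parallel to $\delta$, because $\mu^\infty-\mu_\infty=|\lambda|\delta$ and $\omu^\infty-\omu_\infty=|\olambda|\delta$.

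The remaining and main step is to show that the chord $\mu_\infty-\omu_\infty$ (and likewise $\mu^\infty-\omu^\infty$) is parallel to a coroot. Here I will compute using the edge decomposition:
\[
\mu_\infty-\omu_\infty=\Big(\sum_k a_k\Big)\alpha_1^\vee-\Big(\sum_k \bar a_k\Big)\alpha_0^\vee+\Big(\sum_k (k-1)(a_k-\bar a_k)\Big)\delta .
\]
Whether this vector is a multiple of a coroot is not automatic for an arbitrary GGMS polytope. This is exactly where the MV conditions must be used. I expect the lower diagonal relation \ref{condition:lower} to force the $\alpha_0^\vee$ and $\alpha_1^\vee$ components of the chord to balance so that it is a multiple of $\alpha_1^\vee-\alpha_0^\vee$, hmm, which is not a coroot. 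So I anticipate the real intended reading is different: since $\delta$ is itself (a multiple of) an imaginary coroot, and the sides $[\mu_\infty,\mu^\infty]$ are parallel to $\delta$, I would instead try to show the chord is parallel to $\pm\delta$ direction complement, i.e. reinterpret via $\langle\cdot,\omega_0+\omega_1\rangle$ as ``width''. Concretely, I would first try to prove $\langle \mu_\infty-\omu_\infty,\,\cdot\,\rangle$ makes the chord horizontal in the figure's sense, and compare with the paper's coroot conventions. If that fails, the fallback is to use condition (i), which ties $\lambda,\olambda$ to the width and to parallelism of the two chords, to pin down the direction. This identification of the chord direction is the main obstacle. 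Every other edge is inherited from $P$ and is a coroot direction by hypothesis.
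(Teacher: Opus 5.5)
\textbf{There is a genuine gap.} The one step with real content is identifying the direction of the chord $\mu_\infty-\omu_\infty$ (and of $\mu^\infty-\omu^\infty$). You never carry this step out, and your expectation about its outcome is wrong. The lower diagonal relation does not make the $\alpha_0^\vee$- and $\alpha_1^\vee$-coefficients of the chord balance; it makes one of them \emph{vanish}.

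In your decomposition, note that $\delta=\alpha_0^\vee+\alpha_1^\vee$ contributes to both coefficients. The $\alpha_0^\vee$-coefficient is $\langle \mu_\infty-\omu_\infty,\omega_0\rangle=\sum_k (k-1)a_k-\sum_k k\bar a_k$. The $\alpha_1^\vee$-coefficient is $\langle \mu_\infty-\omu_\infty,\omega_1\rangle=\sum_k k a_k-\sum_k (k-1)\bar a_k$. Up to sign, these are exactly the two quantities in the lower diagonal relation written in Lusztig data.

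So the chord is not in the direction $\alpha_1^\vee-\alpha_0^\vee$ (horizontal in the figure). For instance, in Figure~\ref{polytope} the chord from $\omu_3$ to $\mu_4$ is parallel to the edge $[\mu_0,\mu_1]$, i.e.\ to $\alpha_1^\vee$. Neither the width $\langle\cdot,\omega_0+\omega_1\rangle$ nor condition (i) on the decorations pins down this direction, so your proposed fallbacks do not close the gap.

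\textbf{The missing argument is short.} Only finitely many $a_k,\bar a_k$ are nonzero, so pick $k$ with $\mu_k=\mu_{k+1}=\mu_\infty$ and $\omu_k=\omu_{k+1}=\omu_\infty$. The lower diagonal relation \ref{condition:lower} at index $k+1$ then reads
\[
\max\{\langle \omu_\infty-\mu_\infty,\omega_1\rangle,\ \langle \mu_\infty-\omu_\infty,\omega_0\rangle\}=0,
\]
so one of these two pairings is zero. Since $\langle\alpha_i^\vee,\omega_j\rangle$ is $1$ for $i=j$ and $0$ otherwise, vanishing of the $\omega_0$-pairing puts the chord in $\Z\alpha_1^\vee$. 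Vanishing of the $\omega_1$-pairing puts it in $\Z\alpha_0^\vee$. Integrality of the length is automatic, since both endpoints are lattice points.

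Likewise, the upper diagonal relation \ref{condition:upper} at a large index gives $\min\{\langle \omu^\infty-\mu^\infty,\omega_0\rangle,\langle\mu^\infty-\omu^\infty,\omega_1\rangle\}=0$. This yields the same conclusion for $\mu^\infty-\omu^\infty$.

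With this step supplied, the rest of your outline goes through and matches the paper's proof. The edges of $L(P)$ and $U(P)$ are those inherited from $P$ plus one chord each. $M(P)$ is bounded by the two chords and the two sides parallel to $\delta$.
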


\begin{proof}
The only thing we need to show is that any new edges in these subpolytopes are still in the direction of a coroot and have integer length. In $L(P)$, the only new edge we introduce is $\mu_\infty -\omu_\infty$. Let $k\in N$ be such that $\mu_k = \mu_\infty$ and $\omu_\infty = \omu_k$. First, notice that 
\begin{align*}
\mu_\infty - \omu_\infty &= \mu_k - \omu_k = \sum_{i=1}^k (\mu_{i} - \mu_{i-1}) - \sum_{i=1}^k (\omu_k - \omu_{k-1})\\&=  \sum_{i=1}^k a_i(\alpha_1^\vee + (i-1)\delta) - \sum_{i=1}^k \bar{a}_i(\alpha_0^\vee + (i-1) \delta)
\end{align*}
and thus $\langle \mu_\infty - \omu_\infty, \omega_0 + \omega_1 \rangle = \sum_{i=1}^k (2i-1)a_i - \sum_{i=1}^k  (2i-1) \bar{a}_k \in \Z$. 

The $k+1$ lower diagonal relation \ref{condition:lower} in Definition \ref{definition:affinepolytope} gives $\min \{ \langle \omu_\infty - \mu_\infty, \omega_1 \rangle, \langle \mu_\infty - \omu_\infty, \omega_0 \rangle \} =0$. Thus one of these is zero and hence the edge $\mu_{\infty} - \omu_\infty \in \Z \alpha_0^\vee$ or $\Z \alpha_1^\vee$.

A similar proof works to show that the edge $\mu^\infty - \omu^\infty$ in $U(P)$ is also in $\Z\alpha_0^\vee$ or $\Z\alpha_1^\vee$. As $M(P)$ shares the edges $\mu_\infty - \omu_\infty$ and $\mu^\infty - \omu^\infty$ with $U(P)$ and $L(P)$, then all edges of $M(P)$ are in simple coroot directions with lengths in $\Z$ as well. Thus these three subpolytopes are GGMS polytopes. 
\end{proof}

Note that the previous lemma does not necessarily hold for a GGMS polytope $P$ as these polytopes may not satisfy the diagonal relations. 

We can define subsets of affine MV polytopes inspired by these subpolytopes.  
\begin{definition}\label{definition:lower,upper,middle}
A \emph{lower affine MV polytope} is an affine MV polytope such that $\mu_\infty = \mu^0$ or $\omu_\infty = \mu^0$. An \emph{upper affine MV polytope} is an affine MV polytope such that $\mu^\infty = \mu_0$ or $\omu^\infty = \mu_0$. 

A \emph{middle affine MV polytope} is an affine MV polytope such that $\mu_\infty=\mu_0 $ or $\mu^\infty=\mu^0$ and $\omu_\infty=\mu_0$ or $\omu^\infty= \mu^0 $ along with the decoration $\lambda, \olambda$.
\end{definition}

We call $a_n$ the length of the edge between $\mu_n$ and $\mu_{n+1}$, while $\bar{a}_n$ is the length of the edge between $\omu_n$ and $\omu_{n+1}$. Similarly, let $a^n$ be the length of the edge between $\mu^n$ and $\mu^{n+1}$ and $\bar{a}^n$ the length of the edge between $\omu^n$ and $\omu^{n+1}$. As in the finite case, we can define the Lusztig data as the sequence of lengths along a minimal path from $\mu_0$ to $\mu^0$ in the polytope. 

\begin{definition}
The \emph{right Lusztig data} are $(a_n, \lambda_n, a^n)_{n \in \N}$. The \emph{left Lusztig data} are $(\bar{a}_n, \olambda_n, \bar{a}^n)_{n \in \N}$. 
\end{definition}

Although the vertex data $(\mu_k, \omu_k, \mu^k, \omu^k)_{k \in \N}$ of an affine MV polytope $P$ are an infinite collection, the polytope $P$ can be written as a convex hull of a finite set and hence only a finite subset of the vertex data will be distinct. As we will see in Section \ref{section:highestvertexpoly}, all lower affine MV polytopes have highest vertex $w$ for some Weyl group element $w$. Similarly, every upper affine MV polytope is an upper MV polytope of lowest vertex $v$ for some $v\in W$. 

\section{BZ data of affine MV polytopes}\label{section:BZdata}

We define the BZ data of affine polytopes in a similar way to the finite case in \cite{MVpolytopes}. First, suppose that $s_{i_1} s_{i_2} \dots s_{i_k}$ is a reduced word in the Weyl group. For an integer $k \geq 1$, there are exactly two Weyl group elements of length $k$: one starting with $s_0$ and the other starting with $s_1$. Denote the unique Weyl group element of length $k$ starting with $s_0$ as $\ow_k$ and denote the length $k$ element starting with $s_1$ as $w_k$. 

We will call weights of the form $s_{i_1} s_{i_2} \dots s_{i_k} \omega_{i_k}$ \textit{chamber weights of level $i_k$} and label the set of chamber weights by $\Gamma$. We will denote the weight $s_{i_1} s_{i_2} \dots s_{i_k}\omega_{i_k}$ by $\ogamma_{k+1}$ if $i_1 =0$, $\gamma_{k+1}$ when $i_1 =1$ and set $\gamma_1 = \omega_0$, $\ogamma_1=\omega_1$.

For a fixed $w \in W$, denote $\Gamma^w = \{ s_{i_1} \cdots s_{i_k} \omega_{i_k} : s_{i_1} \cdots s_{i_k} \leq w\} \cup \{ \omega_0, \omega_1\} \subset \Gamma$, where $\leq$ is the strong Bruhat order.

\begin{remark}\label{remark:gammaw}
As the root system is rank 2, these subsets of chamber weights are easy to describe. For $w = w_m$, these sets are given by: 
\begin{align*}
\Gamma^w = \{ \gamma_{k_1}, \ogamma_{k_2}: 1 \leq k_1 \leq m+1, 1 \leq k_2 \leq m \}, && \Gamma \setminus \Gamma^w = \{ \gamma_{k_1}, \ogamma_{k_2}: k_1 \geq m+2, k_2 \geq m+1\}.
\end{align*}
Similarly for $w = \ow_m$, these sets are given by:
\begin{align*}
\Gamma^w = \{ \gamma_{k_1}, \ogamma_{k_2}: 1 \leq k_1 \leq m, 1 \leq k_2 \leq m+1 \}, && \Gamma \setminus \Gamma^w = \{ \gamma_{k_1}, \ogamma_{k_2}: k_1 \geq m+1, k_2 \geq m+2\}.
\end{align*}
\end{remark} 

\begin{definition}\label{definition:affineBZdata}
Let $P$ be a GGMS polytope. Define the hyperplane data of $P$ as the collection $(M_\gamma, M^\gamma)_{\gamma \in \Gamma}$ where we define the lower hyperplane data $(M_\gamma)_{\gamma \in \Gamma}$ by $M_{\ogamma_k} = \langle \omu_k, \ogamma_k \rangle$ and $M_{\gamma_k} = \langle \mu_k, \gamma_k \rangle$. Define the upper hyperplane data $(M^\gamma)_{\gamma \in \Gamma}$ by $M^{\ogamma_k} = \langle \omu^k, - \gamma_k\rangle$ and $M^{\gamma_k} = \langle \mu^k, -\ogamma_k \rangle$.
\end{definition}

For ease of notation, we will denote $M_{\ogamma_k} = M_{\overline{k}}, M_{\gamma_k} = M_{k}, M^{\ogamma_k} =M^{\overline{k}}, M^{\gamma_k} = M^k$. Also, set $\gamma_k^* := \ogamma_k$ and $\ogamma_k^* := \gamma_k$. The hyperplane data of a polytope will define the polytope $P$. 

\begin{lemma}
For $P$ a GGMS polytope, $P = \{ x \in \t_\R: \langle x,  \gamma \rangle \leq M_\gamma, \langle x, -\gamma^* \rangle \leq M^\gamma,  \forall \gamma \in \Gamma\}$.
\end{lemma}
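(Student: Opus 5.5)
We prove the equality by showing that each inequality is a supporting half-plane of $P$ whose boundary line contains an edge of $P$ (possibly degenerate), and that every edge of $P$ arises this way. Since a compact convex polygon equals the intersection of the closed half-planes supporting its edges, this gives $P \subseteq \{\cdots\}$ from supporting-ness and the reverse inclusion from edge coverage.

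\textbf{Step 1: compute the chamber weights.} I will first compute the chamber weights explicitly from the action of $s_0,s_1$ on weights. For instance, $\gamma_1=\omega_0$, $\gamma_2 = s_1\omega_1 = \omega_1 - \alpha_1$, and so on. Pairing with coroots then gives the key identity: for each $k\ge 1$,
\[
\langle \alpha_1^\vee + (k-1)\delta, \gamma_k\rangle = 0,
\]
with $\langle \alpha_0^\vee + j\delta, \gamma_k\rangle > 0$ and $\langle \alpha_1^\vee + j\delta,\gamma_k\rangle \ge 0$ for $j\ge k-1$, while the corresponding pairings are $\le 0$ for the earlier directions $j<k-1$. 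Analogous identities hold for $\ogamma_k$ with the roles of $0$ and $1$ swapped, and for $-\gamma_k^*$ on the upper edges. Concretely, $\gamma_k$ is the linear functional vanishing on the direction of the edge $\mu_{k-1}\mu_k$ and increasing toward the interior ``outward'' side. I expect this sign bookkeeping to be the main obstacle; I would do it by induction on $k$ using $\ogamma_{k+1} = s_0\gamma_k$ and $\gamma_{k+1} = s_1\ogamma_k$, together with the invariance of the pairing under $W$.

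\textbf{Step 2: show $M_{\gamma_k}$ is the maximum of $\gamma_k$ on $P$.} Since $P$ is convex with vertices listed in cyclic order $\mu_0,\mu_1,\dots,\mu^1,\mu^0,\omu^1,\dots,\omu_1$, it suffices to check that $\langle\cdot,\gamma_k\rangle$ is non-decreasing along the boundary path $\mu_0\to\mu_k$ and non-increasing afterward, going around the rest of the boundary back to $\mu_0$. This reduces to the signs of $\gamma_k$ paired with each edge vector:
\begin{itemize}[label={}]
\item the edges $a_j(\alpha_1^\vee+(j-1)\delta)$ for $j\le k$;
\item the edges $a_j(\alpha_1^\vee + (j-1)\delta)$ for $j>k$;
\item the edges $-a^j(\alpha_0^\vee+(j-1)\delta)$;
\item the edges $-\bar a^j(\alpha_1^\vee+(j-1)\delta)$;
\item the edges $\bar a_j(\alpha_0^\vee+(j-1)\delta)$.
\end{itemize}
All of these signs follow from Step 1, since the coroot directions are ordered by slope. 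So $\langle x,\gamma_k\rangle\le M_{\gamma_k}$ on $P$, with equality on the segment $[\mu_{k-1},\mu_k]$. The same argument applies to $\ogamma_k$ and to the upper data. Here the limit vertices $\mu_\infty$, etc. pose no problem, since only finitely many vertices are distinct.

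\textbf{Step 3: edge coverage.} Every edge of $P$ is some $[\mu_{k-1},\mu_k]$, $[\omu_{k-1},\omu_k]$, $[\mu^{k},\mu^{k-1}]$, $[\omu^k,\omu^{k-1}]$, or one of the two edges in the imaginary direction $\delta$, namely $[\mu_\infty,\mu^\infty]$ and $[\omu_\infty,\omu^\infty]$. The $\delta$-edges are the delicate ones. Their outward normal is the limit of the $\gamma_k$ directions (up to scaling), and this limit is not itself a chamber weight. So I will argue that the intersection of the half-planes for the $\gamma_k$, $k\gg0$, already forces $x$ to lie on the correct side of the line through $\mu_\infty$ and $\mu^\infty$. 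These half-planes all pass through $\mu_\infty=\mu_k$ for large $k$, and their normals converge to the $\delta$-edge normal. Their intersection with the upper half-planes through $\mu^\infty$ therefore cuts out exactly the side of that line.

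\textbf{Conclusion.} A point outside $P$ violates the supporting inequality of some edge. By Step 3, it then violates one of the listed inequalities, which proves the reverse inclusion.
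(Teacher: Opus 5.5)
Your Step 2 is where the argument breaks. It cannot be patched as stated, because the orientation is backwards.

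With the paper's conventions, $\langle\alpha_i^\vee,\omega_j\rangle=\delta_{ij}$ gives $\gamma_k=k\omega_0-(k-1)\omega_1$. This is also what the vertex formula $\mu_k=M_k(\alpha_1^\vee+k\delta)-M_{k+1}(\alpha_1^\vee+(k-1)\delta)$ forces. The pairings are then:
\begin{itemize}
\item $\langle\alpha_1^\vee+(j-1)\delta,\gamma_k\rangle=j-k$;
\item $\langle\alpha_0^\vee+j\delta,\gamma_k\rangle=k+j>0$ for every $j\ge 0$ (your Step 1 makes these $\le 0$ for $j<k-1$);
\item $\langle\delta,\gamma_k\rangle=1$.
\end{itemize}
Walking around the boundary, $\langle\cdot,\gamma_k\rangle$ is therefore non-increasing from $\mu_0$ to $\mu_k$. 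It is non-decreasing from $\mu_k$ via $\mu_\infty,\mu^\infty,\mu^0$ to $\omu^k$, and non-increasing from $\omu^k$ back to $\mu_0$. So $\mu_k$ \emph{minimizes} $\gamma_k$ on $P$; your own description of $\gamma_k$ as ``increasing toward the interior'' says exactly this.

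Consequently the inclusion $P\subseteq\{x:\langle x,\gamma\rangle\le M_\gamma\}$ is false. Already for $\gamma_1=\omega_0$ one has $M_{\gamma_1}=\langle\mu_0,\omega_0\rangle$ but $\langle\omu_1,\omega_0\rangle=M_{\gamma_1}+\bar a_1$. Symmetrically, $\mu^k$ maximizes $\ogamma_k$, so the upper inequalities are reversed too.

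The statement as printed carries a sign error, and the paper's proof shares it. It quotes $\psi_P(\gamma_k)=\langle\mu_k,\gamma_k\rangle$ for $\psi_P(\theta)=\max_{x\in P}\langle x,\theta\rangle$. But $\langle\mu_k,\gamma_k\rangle$ is the minimum; the maximum is $\langle\omu^k,\gamma_k\rangle$. The true identity is $P=\{x:\langle x,\gamma\rangle\ge M_\gamma,\ \langle x,-\gamma^*\rangle\ge M^\gamma\ \forall\gamma\in\Gamma\}$.

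To repair your argument, replace every ``maximum/non-decreasing'' in Step 2 by ``minimum/non-increasing''. Also add the edges $|\lambda|\delta$ and $-|\olambda|\delta$ to your list; they fit the same unimodal pattern. With these changes your argument proves the corrected statement, by a route genuinely different from the paper's.

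The paper works with the support function. It imports from \cite{AffineMVpolytopes} the values of $\psi_P$ at chamber weights and the linearity of $\psi_P$ on the cones of the affine Weyl fan. It then checks $\langle x,\theta\rangle\le\psi_P(\theta)$ cone by cone. You use only the edge description of a polygon and compute the edge normals by hand, which is more elementary and self-contained.

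Your handling of the $\delta$-edges is sound, and simpler than you make it. Divide $\langle x-\mu_\infty,\gamma_k\rangle\ge 0$ (valid for all large $k$) by $k$ and let $k\to\infty$. This gives $\langle x-\mu_\infty,\omega_0-\omega_1\rangle\ge 0$, with no need for the upper half-planes.

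That limit (equivalently, continuity of $\psi_P$) is also what the paper's treatment of the rays $C_\infty$, $C_{\overline{\infty}}$ should use. Its formula $\psi_P(\omega_0-\omega_1)=M_1+M^1$ adds generators of two different cones, on which $\psi_P$ is not jointly linear. It fails, for example, for the triangle with vertices $0,\alpha_0^\vee,\alpha_0^\vee+2\alpha_1^\vee$.

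What the support-function route buys in return is that lower-dimensional $P$ (segments, points) need no separate comment. Your ``intersection of the edge half-planes'' step needs a sentence there.
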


\begin{proof}
Let $P$ be a GGMS polytope. Let $\psi_P: \t_\R^* \rightarrow \R$ by $\psi_P(\theta) = \max_{x\in P}\langle x, \theta \rangle$. By \cite[Appendix]{MVpolytopes}, 
\[
P = \{ x \in \t_\R : \langle x, \theta \rangle  \leq \psi_P(\theta), \, \forall \theta \in \t_\R^* \}.
\]
We can apply \cite[Proposition 2.5, 2.16]{AffineMVpolytopes} to see that
\begin{align*}
 \psi_P(\gamma_k) &= \langle \mu_k, \gamma_k \rangle , & \psi_P(\ogamma_k) &= \langle \omu_k, \ogamma_k \rangle, &
 \psi_P(-\gamma_k) &= \langle \omu^k, - \gamma_k \rangle , & \psi_P(-\ogamma_k) & = \langle \mu^k, -\ogamma_k \rangle
\end{align*}
so that $P \subset \{ x \in \t_\R: \langle x, \gamma \rangle \leq M_\gamma, \langle x , - \gamma^* \rangle  \leq M^\gamma, \, \forall \gamma \in \Gamma \}$. 

To prove the other inclusion, consider the affine Weyl fan (see Figure \ref{figure:affineweylfan}. This fan has cones given by
\begin{align}
\begin{split}
C_{w_k} &= \text{span}_{\R_{\geq 0}} \{ \gamma_k, \gamma_{k+1} \}, \\
C^{w_k} &= \text{span}_{\R_{\geq 0}} \{ -\ogamma_k, -\ogamma_{k+1},\} 
\end{split} &
\begin{split}
C_{\ow_k} &= \text{span}_{\R_{\geq 0}} \{ \ogamma_k, \ogamma_{k+1} \}, \\
C_{\infty} &= \text{span}_{\R_{\geq 0}} \{ \omega_0 - \omega_1\},
\end{split} &
\begin{split}
C^{\ow_k} &= \text{span}_{\R_{\geq 0}} \{ -\gamma_k, -\gamma_{k+1} \}, \\
C_{\overline{\infty}} &= \text{span}_{\R_{\geq 0}} \{ -\omega_0 +\omega_1\}.
\end{split}\label{equation:affineweylfan}
\end{align}

\begin{figure}
\centering
$
\begin{tikzpicture}

\draw[-{{Classical TikZ Rightarrow}[scale=2]}] (0,0) -- (3.5*.15/.62, -3.5*0.6/.62) node[below]{$\gamma_0$};
\draw[-{{Classical TikZ Rightarrow}[scale=2]}] (0,0) -- (3.5*.45/.75, -3.5*0.6/.75) node[below]{$\gamma_1$};
\draw[-{{Classical TikZ Rightarrow}[scale=2]}] (0,0) -- (3.5*.75/.96, -3.5*0.6/.96) node[below right]{$\gamma_2$};
\draw[-{{Classical TikZ Rightarrow}[scale=2]}] (0,0) -- (3.5*1.05/1.21, -3.5*0.6/1.21) node[right]{$\gamma_3$};
\draw[-{{Classical TikZ Rightarrow}[scale=2]}] (0,0) -- (3.5*1.35/1.48, -3.5*0.6/1.48) ;
\draw[-{{Classical TikZ Rightarrow}[scale=2]}] (0,0) -- (3.5*1.65/1.76, -3.5*0.6/1.76) ;
\draw[-{{Classical TikZ Rightarrow}[scale=2]}] (0,0) -- (3.5*1.95/2.04, -3.5*0.6/2.04) ;
\draw[-{{Classical TikZ Rightarrow}[scale=2]}] (0,0) -- (3.5*2.15/2.23, -3.5*0.6/2.23) ;
\draw[-{{Classical TikZ Rightarrow}[scale=2]}] (0,0) -- (3.5*2.45/2.52, -3.5*0.6/2.52) ;
\draw[-{{Classical TikZ Rightarrow}[scale=2]}] (0,0) -- (3.5*2.75/2.81, -3.5*0.6/2.81) ;
\draw[-{{Classical TikZ Rightarrow}[scale=2]}] (0,0) -- (3.5*3.05/3.11, -3.5*0.6/3.11) ;
\draw[-{{Classical TikZ Rightarrow}[scale=2]}] (0,0) -- (3.5*3.35/3.4, -3.5*0.6/3.4) ;

\node at (0, -2.5*0.6/.62) {$C_e$};
\node at (2.5*.3/.67, -2.5*.6/.67) {$C_{w_1}$};
\node at (2.75*.6/.85, -2.75*.6/.85) {{\footnotesize $C_{w_2}$}};
\node at (3*.9/1.08, -3*0.6/1.08) {{\footnotesize $C_{w_3}$}};

\node at (-2.5*.3/.67, -2.5*.6/.67) {$C_{\ow_1}$};
\node at (-2.75*.6/.85, -2.75*.6/.85) {{\footnotesize $C_{\ow_2}$}};
\node at (-3*.9/1.08, -3*0.6/1.08) {{\footnotesize $C_{\ow_3}$}};

\draw[-{{Classical TikZ Rightarrow}[scale=2]}] (0,0) -- (-3.5*.15/.62, -3.5*0.6/.62) node[below]{$\ogamma_0$};
\draw[-{{Classical TikZ Rightarrow}[scale=2]}] (0,0) -- (-3.5*.45/.75, -3.5*0.6/.75) node[below]{$\ogamma_1$};
\draw[-{{Classical TikZ Rightarrow}[scale=2]}] (0,0) -- (-3.5*.75/.96, -3.5*0.6/.96) node[below left]{$\ogamma_2$};
\draw[-{{Classical TikZ Rightarrow}[scale=2]}] (0,0) -- (-3.5*1.05/1.21, -3.5*0.6/1.21) node[left]{$\ogamma_3$};
\draw[-{{Classical TikZ Rightarrow}[scale=2]}] (0,0) -- (-3.5*1.35/1.48, -3.5*0.6/1.48) ;
\draw[-{{Classical TikZ Rightarrow}[scale=2]}] (0,0) -- (-3.5*1.65/1.76, -3.5*0.6/1.76) ;
\draw[-{{Classical TikZ Rightarrow}[scale=2]}] (0,0) -- (-3.5*1.95/2.04, -3.5*0.6/2.04) ;
\draw[-{{Classical TikZ Rightarrow}[scale=2]}] (0,0) -- (-3.5*2.15/2.23, -3.5*0.6/2.23) ;
\draw[-{{Classical TikZ Rightarrow}[scale=2]}] (0,0) -- (-3.5*2.45/2.52, -3.5*0.6/2.52) ;
\draw[-{{Classical TikZ Rightarrow}[scale=2]}] (0,0) -- (-3.5*2.75/2.81, -3.5*0.6/2.81) ;
\draw[-{{Classical TikZ Rightarrow}[scale=2]}] (0,0) -- (-3.5*3.05/3.11, -3.5*0.6/3.11) ;
\draw[-{{Classical TikZ Rightarrow}[scale=2]}] (0,0) -- (-3.5*3.35/3.4, -3.5*0.6/3.4) ;

\draw[-{{Classical TikZ Rightarrow}[scale=2]}] (0,0) -- (3.5*.15/.62, 3.5*0.6/.62) node[above]{$-\ogamma_0$};
\draw[-{{Classical TikZ Rightarrow}[scale=2]}] (0,0) -- (3.5*.45/.75, 3.5*0.6/.75) node[above]{$-\ogamma_1$};
\draw[-{{Classical TikZ Rightarrow}[scale=2]}] (0,0) -- (3.5*.75/.96, 3.5*0.6/.96) node[above right]{$-\ogamma_2$};
\draw[-{{Classical TikZ Rightarrow}[scale=2]}] (0,0) -- (3.5*1.05/1.21, 3.5*0.6/1.21) node[right]{$-\ogamma_3$};
\draw[-{{Classical TikZ Rightarrow}[scale=2]}] (0,0) -- (3.5*1.35/1.48, 3.5*0.6/1.48) ;
\draw[-{{Classical TikZ Rightarrow}[scale=2]}] (0,0) -- (3.5*1.65/1.76, 3.5*0.6/1.76) ;
\draw[-{{Classical TikZ Rightarrow}[scale=2]}] (0,0) -- (3.5*1.95/2.04, 3.5*0.6/2.04) ;
\draw[-{{Classical TikZ Rightarrow}[scale=2]}] (0,0) -- (3.5*2.15/2.23, 3.5*0.6/2.23) ;
\draw[-{{Classical TikZ Rightarrow}[scale=2]}] (0,0) -- (3.5*2.45/2.52, 3.5*0.6/2.52) ;
\draw[-{{Classical TikZ Rightarrow}[scale=2]}] (0,0) -- (3.5*2.75/2.81, 3.5*0.6/2.81) ;
\draw[-{{Classical TikZ Rightarrow}[scale=2]}] (0,0) -- (3.5*3.05/3.11, 3.5*0.6/3.11) ;
\draw[-{{Classical TikZ Rightarrow}[scale=2]}] (0,0) -- (3.5*3.35/3.4, 3.5*0.6/3.4) ;

\node at (0, 2.5*0.6/.62) {$C^e$};

\node at (2.5*.3/.67, 2.5*.6/.67) {$C^{w_1}$};
\node at (2.75*.6/.85, 2.75*.6/.85) {{\footnotesize $C^{w_2}$}};
\node at (3*.9/1.08, 3*0.6/1.08) {{\footnotesize $C^{w_3}$}};

\node at (-2.5*.3/.67, 2.5*.6/.67) {$C^{\ow_1}$};
\node at (-2.75*.6/.85, 2.75*.6/.85) {{\footnotesize $C^{\ow_2}$}};
\node at (-3*.9/1.08, 3*0.6/1.08) {{\footnotesize $C^{\ow_3}$}};

\draw[-{{Classical TikZ Rightarrow}[scale=2]}] (0,0) -- (-3.5*.15/.62, 3.5*0.6/.62) node[above]{$-\gamma_0$};
\draw[-{{Classical TikZ Rightarrow}[scale=2]}] (0,0) -- (-3.5*.45/.75, 3.5*0.6/.75) node[above]{$-\gamma_1$};
\draw[-{{Classical TikZ Rightarrow}[scale=2]}] (0,0) -- (-3.5*.75/.96, 3.5*0.6/.96) node[above left]{$-\gamma_2$};
\draw[-{{Classical TikZ Rightarrow}[scale=2]}] (0,0) -- (-3.5*1.05/1.21, 3.5*0.6/1.21) node[left]{$-\gamma_3$};
\draw[-{{Classical TikZ Rightarrow}[scale=2]}] (0,0) -- (-3.5*1.35/1.48, 3.5*0.6/1.48) ;
\draw[-{{Classical TikZ Rightarrow}[scale=2]}] (0,0) -- (-3.5*1.65/1.76, 3.5*0.6/1.76) ;
\draw[-{{Classical TikZ Rightarrow}[scale=2]}] (0,0) -- (-3.5*1.95/2.04, 3.5*0.6/2.04) ;
\draw[-{{Classical TikZ Rightarrow}[scale=2]}] (0,0) -- (-3.5*2.15/2.23, 3.5*0.6/2.23) ;
\draw[-{{Classical TikZ Rightarrow}[scale=2]}] (0,0) -- (-3.5*2.45/2.52, 3.5*0.6/2.52) ;
\draw[-{{Classical TikZ Rightarrow}[scale=2]}] (0,0) -- (-3.5*2.75/2.81, 3.5*0.6/2.81) ;
\draw[-{{Classical TikZ Rightarrow}[scale=2]}] (0,0) -- (-3.5*3.05/3.11, 3.5*0.6/3.11) ;
\draw[-{{Classical TikZ Rightarrow}[scale=2]}] (0,0) -- (-3.5*3.35/3.4, 3.5*0.6/3.4) ;

\draw[thick, -{{Classical TikZ Rightarrow}[scale=2]}] (0,0) -- (-3.5, 0) node[left]{$\omega_1 - \omega_0$};
\draw[thick, -{{Classical TikZ Rightarrow}[scale=2]}] (0,0) -- (3.5, 0) node[right]{$\omega_0 - \omega_1$};

\draw[-{{Classical TikZ Rightarrow}[scale=2]}] (0,0) -- (3.5*3.65/3.7, 3.5*0.6/3.7) ;
\draw[-{{Classical TikZ Rightarrow}[scale=2]}] (0,0) -- (3.5*3.95/4, 3.5*0.6/4) ;
\draw[-{{Classical TikZ Rightarrow}[scale=2]}] (0,0) -- (3.5*4.05/4.1, 3.5*0.6/4.1) ;
\draw[-{{Classical TikZ Rightarrow}[scale=2]}] (0,0) -- (3.5*4.35/4.39, 3.5*0.6/4.39) ;
\draw[-{{Classical TikZ Rightarrow}[scale=2]}] (0,0) -- (3.5*4.65/4.69, 3.5*0.6/4.69) ;
\draw[-{{Classical TikZ Rightarrow}[scale=2]}] (0,0) -- (3.5*4.95/4.99, 3.5*0.6/4.99) ;
\draw[-{{Classical TikZ Rightarrow}[scale=2]}] (0,0) -- (3.5*5.15/5.18, 3.5*0.6/5.18) ;
\draw[-{{Classical TikZ Rightarrow}[scale=2]}] (0,0) -- (3.5*5.45/5.48, 3.5*0.6/5.48) ;

\draw[-{{Classical TikZ Rightarrow}[scale=2]}] (0,0) -- (-3.5*3.65/3.7, -3.5*0.6/3.7) ;
\draw[-{{Classical TikZ Rightarrow}[scale=2]}] (0,0) -- (-3.5*3.95/4, -3.5*0.6/4) ;
\draw[-{{Classical TikZ Rightarrow}[scale=2]}] (0,0) -- (-3.5*4.05/4.1, -3.5*0.6/4.1) ;
\draw[-{{Classical TikZ Rightarrow}[scale=2]}] (0,0) -- (-3.5*4.35/4.39, -3.5*0.6/4.39) ;
\draw[-{{Classical TikZ Rightarrow}[scale=2]}] (0,0) -- (-3.5*4.65/4.69, -3.5*0.6/4.69) ;
\draw[-{{Classical TikZ Rightarrow}[scale=2]}] (0,0) -- (-3.5*4.95/4.99, -3.5*0.6/4.99) ;
\draw[-{{Classical TikZ Rightarrow}[scale=2]}] (0,0) -- (-3.5*5.15/5.18, -3.5*0.6/5.18) ;
\draw[-{{Classical TikZ Rightarrow}[scale=2]}] (0,0) -- (-3.5*5.45/5.48, -3.5*0.6/5.48) ;

\draw[-{{Classical TikZ Rightarrow}[scale=2]}] (0,0) -- (-3.5*3.65/3.7, 3.5*0.6/3.7) ;
\draw[-{{Classical TikZ Rightarrow}[scale=2]}] (0,0) -- (-3.5*3.95/4, 3.5*0.6/4) ;
\draw[-{{Classical TikZ Rightarrow}[scale=2]}] (0,0) -- (-3.5*4.05/4.1, 3.5*0.6/4.1) ;
\draw[-{{Classical TikZ Rightarrow}[scale=2]}] (0,0) -- (-3.5*4.35/4.39, 3.5*0.6/4.39) ;
\draw[-{{Classical TikZ Rightarrow}[scale=2]}] (0,0) -- (-3.5*4.65/4.69, 3.5*0.6/4.69) ;
\draw[-{{Classical TikZ Rightarrow}[scale=2]}] (0,0) -- (-3.5*4.95/4.99, 3.5*0.6/4.99) ;
\draw[-{{Classical TikZ Rightarrow}[scale=2]}] (0,0) -- (-3.5*5.15/5.18, 3.5*0.6/5.18) ;
\draw[-{{Classical TikZ Rightarrow}[scale=2]}] (0,0) -- (-3.5*5.45/5.48, 3.5*0.6/5.48) ;

\draw[-{{Classical TikZ Rightarrow}[scale=2]}] (0,0) -- (3.5*3.65/3.7, -3.5*0.6/3.7) ;
\draw[-{{Classical TikZ Rightarrow}[scale=2]}] (0,0) -- (3.5*3.95/4, -3.5*0.6/4) ;
\draw[-{{Classical TikZ Rightarrow}[scale=2]}] (0,0) -- (3.5*4.05/4.1, -3.5*0.6/4.1) ;
\draw[-{{Classical TikZ Rightarrow}[scale=2]}] (0,0) -- (3.5*4.35/4.39, -3.5*0.6/4.39) ;
\draw[-{{Classical TikZ Rightarrow}[scale=2]}] (0,0) -- (3.5*4.65/4.69, -3.5*0.6/4.69) ;
\draw[-{{Classical TikZ Rightarrow}[scale=2]}] (0,0) -- (3.5*4.95/4.99, -3.5*0.6/4.99) ;
\draw[-{{Classical TikZ Rightarrow}[scale=2]}] (0,0) -- (3.5*5.15/5.18, -3.5*0.6/5.18) ;
\draw[-{{Classical TikZ Rightarrow}[scale=2]}] (0,0) -- (3.5*5.45/5.48, -3.5*0.6/5.48) ;

\end{tikzpicture}
$
\caption{The affine Weyl fan}
\label{figure:affineweylfan}
\end{figure}

The union of these cones is $\t_\R^*$ and $\psi_P$ is linear on each of these cones \cite{AffineMVpolytopes}. Consider $x$ such that $\langle x, \gamma \rangle \leq M_\gamma, \langle x, - \gamma^* \rangle \leq M^\gamma, \, \forall \gamma \in \Gamma$. If $\theta \in C_{w_k}$, then $\theta = n_1 \gamma_k + n_2 \gamma_{k+1}$. By linearity of $\psi_P$ and the definition of $M_\gamma$, $\psi_P(\theta) = n_1 M_{k} + n_2 M_{k+1}$ so that
\begin{align*}
\langle x, \theta \rangle & = n_1 \langle  x, \gamma_k \rangle + n_2 \langle x, \gamma_{k+1} \rangle  \leq n_1 M_k + n_2 M_{k+1} = \psi_P(\theta).
\end{align*}
A similar argument works for $\theta \in C_{\ow_k}\cup C^{w_k} \cup C^{\ow_k}$. 

For $\theta \in C_\infty$, then $\theta = n_1 (\omega_0 - \omega_1)$ for some $n \in \R_{\geq0}$. By linearity of $\phi_P$ and the definition of $M_\gamma$, $\psi_P(\theta) = n M_1 + n M^1$ so that
\begin{align*}
 \langle x, \theta \rangle &= n \langle x, \omega_0 \rangle + n \langle x, - \omega_1 \rangle \leq n M_1 + n M^1 = \psi_P(\theta).
\end{align*}
A similar argument works for $\theta \in C_{\overline{\infty}}$. Thus $P = \{ x \in \t_\R : \langle x, \gamma \rangle \leq M_\gamma, \langle x, - \gamma^* \rangle \leq M^\gamma, \, \forall \gamma \}$. 
\end{proof}

By the definition of the hyperplane data, the vertices are given by $\mu_k =  \sum_{i=0}^1 M_{w_k \cdot \omega_i} w_k \cdot \alpha^\vee_i$ and $\omu_k =\sum_{i=0}^1 M_{\ow_k \cdot \omega_i} \ow_k \cdot \alpha^\vee_i$, where $w_0 = \ow_0 = e$ the identity. Thus, for $k \geq 0$, the vertices are given by
\begin{align*}
\mu_k &= M_k (\alpha^\vee_1 + k\delta)- M_{k+1} (\alpha^\vee_1 + (k-1) \delta), &
\omu_k &= M_{\overline{k}} (\alpha^\vee_0 + k \delta) -M_{\overline{k+1}} (\alpha^\vee_0 + (k-1) \delta), \\
\mu^k &= M^k (\alpha^\vee_0 + k\delta)- M^{k+1} (\alpha^\vee_0 + (k-1) \delta), &
\omu^k &= M^{\overline{k}} (\alpha^\vee_1 + k \delta) - M^{\overline{k+1}} (\alpha^\vee_1 + (k-1) \delta). 
\end{align*}
Note we use the notation $w_0=e$ for convenience when we are listing the elements $\{w_k: k \in \N\}$. This is not to be confused the notation for the longest Weyl element $w_0$ in the finite case.

The relation between the Lusztig data and the hyperplane data is given by
\begin{align*}
a_k &= 2M_k- M_{k-1} - M_{k+1}, &
\bar{a}_k &= 2M_{\overline{k}} - M_{\overline{k-1}} - M_{\overline{k+1}},\\
a^k &= -2M^k+ M^{k-1} + M^{k+1}, &
\bar{a}^k &= -2M^{\overline{k}} + M^{\overline{k-1}} + M^{\overline{k+1}}.
\end{align*}

The diagonal relations can be rewritten in terms of the hyperplane data of the polytope. 

\begin{proposition}\label{proposition:diagonalrelations} 

Let $P$ be a GGMS polytope which has lower hyperplane data $(M_\gamma)_{\gamma \in \Gamma}$ and upper hyperplane data $(M^\gamma)_{\gamma \in \Gamma}$. 
The condition \ref{condition:lower} in Definition \ref{definition:affinepolytope} on the vertices of $P$ is equivalent to equations:
\begin{align*}
k M_k+k M_{\overline{k}} = \min \lbrace &   k M_{k-1} + M_k + (k-1) M_{\overline{k+1}}, k M_{\overline{k-1}} + M_{\overline{k}} + (k-1) M_{k+1} \rbrace, \text{ for } k \geq 2.
\end{align*}

The condition \ref{condition:upper} in Definition \ref{definition:affinepolytope} on the vertices of $P$ is equivalent to the equations:
\begin{align*}
 k M^k + k M^{\overline{k}} = \max \lbrace & k M^{k-1} + M^k + (k-1) M^{\overline{k+1}} ,k M^{\overline{k-1}} + M^{\overline{k}} + (k-1) M^{k+1}\rbrace, \text{ for } k \geq 2.
\end{align*}
\end{proposition}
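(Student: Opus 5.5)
The proof is a direct substitution: I will express each pairing appearing in conditions \ref{condition:lower} and \ref{condition:upper} of Definition \ref{definition:affinepolytope} in terms of the hyperplane data, using the explicit vertex formulas
\[
\mu_k = M_k (\alpha^\vee_1 + k\delta)- M_{k+1} (\alpha^\vee_1 + (k-1) \delta), \qquad \omu_k = M_{\overline{k}} (\alpha^\vee_0 + k \delta) -M_{\overline{k+1}} (\alpha^\vee_0 + (k-1) \delta)
\]
(and their upper analogues) derived just above. The pairings needed follow from $\langle \alpha_i^\vee,\omega_j\rangle=\delta_{ij}$ and $\langle\delta,\omega_j\rangle = 1$. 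They are
\[
\langle \alpha_1^\vee + k\delta, \omega_1\rangle = k+1,\quad \langle \alpha_0^\vee+k\delta,\omega_1\rangle = k,\quad \langle \alpha_1^\vee + k\delta, \omega_0\rangle = k,\quad \langle \alpha_0^\vee+k\delta,\omega_0\rangle = k+1 .
\]

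For the lower relation I will compute the two quantities separately. First, $\langle \omu_k,\omega_1\rangle = kM_{\overline{k}} - (k-1)M_{\overline{k+1}}$ and $\langle \mu_{k-1},\omega_1\rangle = kM_{k-1} - (k-1)M_k$. Setting $S = kM_k + kM_{\overline{k}}$, this gives
\[
\langle \omu_k-\mu_{k-1},\omega_1\rangle = S - \bigl(kM_{k-1} + M_k + (k-1)M_{\overline{k+1}}\bigr).
\]
Symmetrically, $\langle \mu_k,\omega_0\rangle = kM_k-(k-1)M_{k+1}$ and $\langle \omu_{k-1},\omega_0\rangle = kM_{\overline{k-1}}-(k-1)M_{\overline{k}}$, so
\[
\langle \mu_k-\omu_{k-1},\omega_0\rangle = S - \bigl(kM_{\overline{k-1}}+M_{\overline{k}}+(k-1)M_{k+1}\bigr).
\]
Since $\max\{S-X,S-Y\} = S-\min\{X,Y\}$, the condition that this maximum is $0$ is exactly $S=\min\{X,Y\}$, which is the claimed equation.

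The upper relation is handled the same way, using the formulas for $\mu^k,\omu^k$ in terms of $M^k, M^{\overline{k}}$. The index bookkeeping differs: $\mu^k$ is built from $\alpha_0^\vee$-type vectors and $\omu^k$ from $\alpha_1^\vee$-type vectors. Because of this, the pairings $\langle \omu^k-\mu^{k-1},\omega_0\rangle$ and $\langle \mu^k-\omu^{k-1},\omega_1\rangle$ come out as $S' - X'$ and $S'-Y'$, with $S' = kM^k+kM^{\overline{k}}$ and $X',Y'$ the two expressions in the statement. Using $\min\{S'-X',S'-Y'\} = S'-\max\{X',Y'\}$ then turns the condition into the stated maximum.

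The only real obstacle is bookkeeping. One must confirm which fundamental weight pairs with which vertex family, and check that the coefficients $k$, $k-1$ and $1$ rearrange correctly into the stated combination. I will verify this once for the lower case and then transcribe it for the upper case.
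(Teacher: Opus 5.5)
Your proof is the same as the paper's: you write $\langle \omu_k-\mu_{k-1},\omega_1\rangle$ and $\langle \mu_k-\omu_{k-1},\omega_0\rangle$ as $S-X$ and $S-Y$ using the displayed vertex formulas, then apply $\max\{S-X,S-Y\}=S-\min\{X,Y\}$, treating the upper case by the dual identity; the paper only says ``a similar proof works'' there, and all your pairings check out. One caveat for the upper bookkeeping: the displayed formulas give $\langle \mu^k,-\ogamma_k\rangle=-M^k$ and $\langle \omu^k,-\gamma_k\rangle=-M^{\overline{k}}$, so the $\max$ form holds for the convention $M^{\gamma_k}=\langle\mu^k,\ogamma_k\rangle$, $M^{\ogamma_k}=\langle\omu^k,\gamma_k\rangle$ built into those formulas, whereas the literal Definition~\ref{definition:affineBZdata} would turn the same computation into a $\min$, and both you and the paper are implicitly using the former convention.
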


\begin{proof}
Let $k \geq 2$. First, notice that
\begin{align*}
\langle \omu_k - \mu_{k-1}, \omega_1 \rangle
& = k M_{\overline{k}} -(k-1)M_{\overline{k+1}}- k M_{k-1} + (k-1)M_k, \\
\langle \mu_k - \omu_{k-1}, \omega_0 \rangle
 & = k M_k - (k-1) M_{k+1} -  kM_{\overline{k-1}}  +(k-1)M_{\overline{k}}.
\end{align*}

The maximum of $\{ \langle \omu_k - \mu_{k-1}, \omega_1 \rangle , \langle  \mu_k - \omu_{k-1}, \omega_0 \rangle \} $ is equal to the minimum
\[
-\min\{ (k-1)M_{\overline{k+1}}+  k M_{k-1} + M_k , (k-1) M_{k+1} + kM_{\overline{k-1}} + M_{\overline{k}} \} +k M_k+  k M_{\overline{k}}
\]
so that the condition $\max\{\langle \omu_k - \mu_{k-1}, \omega_1 \rangle, \langle \mu_k - \omu_{k-1}, \omega_0 \rangle\} =0$ is equivalent to 
\[
\min\{ (k-1)M_{\overline{k+1}}+  k M_{k-1} + M_k , (k-1) M_{k+1} + kM_{\overline{k-1}} + M_{\overline{k}} \} =k M_k+  k M_{\overline{k}}.
\]

A similar proof works for the upper diagonal equations.
\end{proof}

When $P$ is an MV polytope, the hyperplane data will satisfy the diagonal relations. We will call the lower hyperplane data $(M_\gamma)_{\gamma \in \Gamma}$ a \emph{lower BZ datum} when the collection satisfies the lower diagonal relations \ref{condition:lower} in Definition \ref{definition:affinepolytope}. We call the upper hyperplane data $(M^\gamma)_{\gamma \in \Gamma}$ an \emph{upper BZ datum} when the collection satisfies the upper diagonal relations \ref{condition:upper} in Definition \ref{definition:affinepolytope}. We call $(M_\gamma, M^\gamma)_{\gamma \in \Gamma}$ a \emph{BZ datum} when $(M_\gamma)_{\gamma \in \Gamma}$ is a lower BZ datum and $(M^\gamma)_{\gamma \in \Gamma}$ is an upper BZ datum. 

Similar to the finite case, the Lusztig data along one minimal path completely determines an MV polytope and every possible Lusztig data results in an MV polytope.

\begin{theorem}[{\cite[Theorem 3.11]{Rank2affine}}]
Consider a collection $(a_n, \lambda_n, a^n)_{n \in \N}$ such that  
\begin{enumerate}[label=(\roman*)]
 \item $(a_n, \lambda_n, a^n) \in \Z^3$ for all $n$
 \item for large enough $k$, $a_k = \lambda_k = a^k =0$
 \item $\lambda_1 \geq \lambda_2 \geq \cdots$
\end{enumerate}
Then there is a unique affine MV polytope $P$ whose right Lusztig data are $(a_n, \lambda_n, a^n)_{n \in \N}$. 
\end{theorem}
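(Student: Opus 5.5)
The plan is to reconstruct $P$ directly from $(a_n,\lambda_n,a^n)_{n\in\N}$. First I place the right-hand boundary of the polytope. Then I show that the diagonal relations of Definition \ref{definition:affinepolytope}, together with conditions (i) and (ii), determine the left-hand boundary and the left decoration uniquely.

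\textbf{Right side.} Normalise $\mu_0=0$ and set
\[
\mu_k=\sum_{i\le k}a_i(\alpha_1^\vee+(i-1)\delta).
\]
By hypothesis (ii) this sequence stabilises at $\mu_\infty$. Put $\mu^\infty=\mu_\infty+|\lambda|\delta$ and define $\mu^{k-1}=\mu^k+a^k(\alpha_0^\vee+(k-1)\delta)$ downward from $\mu^\infty$; again only finitely many terms are distinct. This fixes the right half of the boundary, hence all of the lower hyperplane data $M_k$ and upper data $M^k$. The decoration $\lambda$ is fixed by hypothesis (iii).

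\textbf{Lower left chain.} I will determine $\omu_1,\omu_2,\dots$ using the lower BZ relations of Proposition \ref{proposition:diagonalrelations}. Read as an equation in the unknowns $M_{\overline{k}}$, the relation at step $k$ is a tropical (piecewise-linear) recursion. Geometrically, the diagonal relation \ref{condition:lower} says that either $\omu_k$ lies on the $\alpha_0^\vee$-line through $\mu_{k-1}$, or $\mu_k$ lies on the $\alpha_1^\vee$-line through $\omu_{k-1}$, with the other point on or below its line.
\begin{itemize}
\item In the first case, $\omu_k-\omu_{k-1}$ is a multiple of $\alpha_0^\vee+(k-1)\delta$, so $\omu_k$ is pinned down as an intersection of two lines.
\item In the second case, the relation gives only an inequality on $\bar a_k$.
\end{itemize}
So the recursion leaves one free parameter, which I expect to reduce to the width $N=\langle\mu_\infty-\omu_\infty,\omega_0+\omega_1\rangle$. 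Lemma \ref{lemma:lumGGMS} forces the closing edge $\mu_\infty-\omu_\infty$ to lie in a simple coroot direction, which constrains how the chain can terminate.

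\textbf{Fixing the width from the upper half.} I run the same argument on the upper chain $\omu^k$ using the upper diagonal relations \ref{condition:upper}. Then I impose the compatibility conditions (i) and (ii) of Definition \ref{definition:affinepolytope}: $\olambda$ equals $\lambda$ with a part of size $N$ added or removed, or $\olambda=\lambda$ in the parallel case, and $\lambda_1,\olambda_1\le N$. Together with $\omu^\infty-\omu_\infty=|\olambda|\delta$, these should single out exactly one admissible width $N$. That $N$ fixes the lower and upper left chains, and hence $P$.

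\textbf{Existence.} The same construction, run with that value of $N$, produces a GGMS polytope that satisfies all four conditions of Definition \ref{definition:affinepolytope}. This should be a routine but case-heavy check.

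\textbf{Main obstacle.} The hard step is the uniqueness and existence of the width. One has to show that the left chains built recursively close up consistently for exactly one $N$: the lower and upper left chains must meet the middle quadrilateral with vertical-difference $|\olambda|\delta$ and with $\olambda$ related to $\lambda$ as in (i). My first attempt would be to show the monotonicity property I expect: the $\delta$-height of $\omu^\infty-\omu_\infty$ produced by the recursions is a strictly monotone piecewise-linear function of $N$, while the right-hand side $|\olambda|$ allowed by (i) moves by exactly $\pm N$. That would leave a single crossing. If this becomes unwieldy, the fallback is to argue via the crystal/PBW realisation of \cite{Rank2affine}, where Lusztig data along one reduced path parametrise the same basis.
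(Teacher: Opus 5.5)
There is a genuine gap. The step you label the main obstacle is the whole content of the theorem, and the proposal only conjectures it (``should single out'', ``the monotonicity property I expect''). For comparison, the paper does not prove this statement either: it imports it from \cite{Rank2affine}, whose proof builds the left Lusztig data explicitly from the right ones. An example is the closed formula for $\bar a_1$ quoted after the theorem, which involves the $a_k$, $\lambda_1$ and the $a^k$ at once. Your plan would have to recover that information, and two of its steps fail as described.

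First, the lower diagonal relations do not leave a single free parameter. Write the $k$-th relation as $\max\{A_k,B_k\}=0$, where $A_k=\sum_{s=2}^{k}(s-1)\bar a_s-\sum_{s=1}^{k-1}s a_s$ and $B_k=\sum_{s=2}^{k}(s-1)a_s-\sum_{s=1}^{k-1}s\bar a_s$. Each step with $B_k=0$ only bounds $\bar a_k$ from above. Each step with $B_k<0$ forces $(k-1)\bar a_k$ to a value that need not be divisible by $k-1$. So the recursion is a branching tree with integrality dead ends, not a one-parameter family. For instance, take $a_1=5$, $a_2=1$, all other $a_k=0$, and $\bar a_1=1$. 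Then $\bar a_2$ can be $0,1,3$ or $5$, while $2$ and $4$ die at $k=3$ by integrality. The completions $(1,0,0,\dots)$, $(1,1,3,0,\dots)$, $(1,3,2,0,\dots)$, $(1,5,1,0,\dots)$ close with $\mu_\infty-\omu_\infty$ equal to $7\alpha_1^\vee$, $-11\alpha_0^\vee$, $-12\alpha_0^\vee$, $-13\alpha_0^\vee$ respectively. You therefore need an actual lemma: which closing edges (direction and length) the lower half can attain, and that each is attained by exactly one chain. You need the mirror statement for the upper half too. The proposal supplies neither.

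Second, your mechanism for fixing $N$ misplaces where the constraint comes from. Closing the middle quadrilateral only forces the lower and upper widths to agree. After that, $|\olambda|-|\lambda|$ is automatically $0$ or $\pm N$, according to the directions of the two closing edges. So there is no pair of monotone functions whose crossing selects $N$. What selects $N$ is how the two attainable sets of (width, direction) interact with conditions (i)--(ii) of Definition~\ref{definition:affinepolytope}. In the configuration where $\olambda$ is $\lambda$ with a part removed, (ii) forces that part to be $N=\lambda_1$. In the other configurations you need $N\ge\lambda_1$, and $N$ must be a width both halves admit in the required directions. Without the lemma from the first point this case analysis cannot be set up, and for the same reason the existence half is not ``routine''.

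Finally, the crystal/PBW fallback is circular as it stands, because in \cite{Rank2affine} the crystal structure on these polytopes is built on this bijection. Even an independent construction of a polytope from each Lusztig datum would give only existence. It would not show that no second decorated polytope satisfying Definition~\ref{definition:affinepolytope} has the same right Lusztig data.
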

In the proof of this theorem, the left Lusztig data are explicitly constructed from the right Lusztig data. Moreover, by \cite[Remark 3.22]{Rank2affine}, the resulting polytope $P$ with right Lusztig data $(a_i, \lambda, a^i)$ has left Lusztig data $(\bar{a}_i, \olambda, \bar{a}^i)$ where
\begin{align*}
\bar{a}_1 = \max \{& (k-1) a_k + (k-2) a_{k-1} - 2a_{k-2} - \cdots - 2a_2 - 2a_1, \text{ for } k \geq 2, \\
& \lambda_1- \cdots - 2a_3 - 2a_2 - 2a_1, \\
&ka^k + (k+1) a^{k+1} + 2a^{k+2} + 2a^{k+3} + \cdots - 2a_{k-2} - \cdots - 2a_2 - 2a_1, \text{ for } k \geq 1\}.
\end{align*}

We will give an alternate proof of this equality for lower affine MV polytopes using the diagonal relations. To write the diagonals in terms of the Lusztig data, note that for $k \geq 2$,
\begin{align*}
\langle \omu_k - \mu_{k-1}, \omega_1 \rangle & = \sum_{s=1}^k \langle \omu_s - \omu_{s-1}, \omega_1 \rangle - \sum_{s=1}^{k-1}\langle \mu_s -\mu_{s-1}, \omega_1 \rangle = \sum_{s=2}^k (s-1)\bar{a}_s - \sum_{s=1}^{k-1}sa_s, \\
\langle \mu_k - \omu_{k-1}, \omega_0 \rangle & = \sum_{s=1}^k \langle \mu_s - \mu_{s-1}, \omega_0 \rangle - \sum_{s=1}^{k-1} \langle  \omu_s -\omu_{s-1}, \omega_0 \rangle = \sum_{s=2}^k (s-1) a_s - \sum_{s=1}^{k-1}s\bar{a}_s.
\end{align*}
Thus the lower diagonal equations can be written as
\begin{align*}
\max \{ &(k-1) \bar{a}_k + (k-2) \bar{a}_{k-1} + \cdots + 2\bar{a}_3 + \bar{a}_2 - a_1 - 2 a_2 - \cdots - (k-1) a_{k-1},  \\  & (k-1) a_k + (k-2) a_{k-1} + \cdots + 2 a_3 +  a_2 -\bar{a}_1 - 2\bar{a}_2 - \cdots - (k-1)\bar{a}_{k-1} \} =0.
\end{align*}

\begin{lemma}
For an affine MV polytope $P$ with right Lusztig data $(a_i, \lambda, a^i)$, 
\begin{align}\label{equation:bara1}
\bar{a}_1 \geq \max_{\substack{n\in\N \\ n \geq 2}}\{ (n-1) a_n + (n-2)a_{n-1} - 2a_{n-2} - \cdots - 2 a_2 - 2a_1\}.
\end{align}
If the right Lusztig data of the polytope has $\lambda =0$ and $a^k = 0$ for all $k \geq 0$, then this is an equality. 
\end{lemma}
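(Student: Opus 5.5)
The plan is to derive \eqref{equation:bara1} directly from the lower diagonal relations, written in Lusztig form just before the statement. For $k \geq 2$ set
\begin{align*}
X_k &= \sum_{s=2}^k (s-1)a_s - \sum_{s=1}^{k-1} s\bar{a}_s, & Y_k &= \sum_{s=2}^k (s-1)\bar{a}_s - \sum_{s=1}^{k-1} s a_s .
\end{align*}
Condition \ref{condition:lower} in Definition \ref{definition:affinepolytope} says $\max\{X_k, Y_k\} = 0$. In particular $X_k \leq 0$ and $Y_k \leq 0$ for every $k \geq 2$, and for each $k$ at least one of them is an equality. Only the lower polytope enters, so $\lambda$ and the $a^k$ play no role in the inequality.

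\textbf{Step 1 (a first combination).} I would add $X_n$ and $Y_{n-1}$; most coefficients telescope. For $n \geq 3$ a short computation gives
\[
X_n + Y_{n-1} = (n-1)a_n + (n-2)a_{n-1} - \sum_{s=1}^{n-2} a_s - \sum_{s=1}^{n-1}\bar{a}_s \leq 0 .
\]
This can be rewritten as
\[
\bar{a}_1 \geq (n-1)a_n + (n-2)a_{n-1} - \sum_{s=1}^{n-2} a_s - \sum_{s=2}^{n-1}\bar{a}_s .
\]
The case $n = 2$ is just $X_2 \leq 0$, i.e.\ $\bar{a}_1 \geq a_2$. This matches \eqref{equation:bara1} with $n = 2$, since $(n-2)a_{n-1} = 0$ and there are no $-2a_s$ terms.

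\textbf{Step 2 (the main obstacle).} To reach \eqref{equation:bara1} for $n \geq 3$, it suffices to show
\[
\sum_{s=2}^{n-1}\bar{a}_s \leq \sum_{s=1}^{n-2} a_s .
\]
That is, the left edges at levels $2,\dots,n-1$ are controlled by the right edges one level lower. I would prove this by induction on $n$ using the diagonal relations at every level. The first thing to try is the differences $Y_{k} - Y_{k-1}$ and $X_{k} - X_{k-1}$, combined with the fact that at each level one of $X_k, Y_k$ vanishes. Geometrically, $\omu_{k}$ cannot lie above the $\alpha_0^\vee$-line through $\mu_{k-1}$, which should bound the accumulated $\bar{a}$'s by the $a$'s. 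If this single inequality fails, the fallback is to look for a different nonnegative combination of the $X_k, Y_k$, for instance $X_n + \sum_j c_j Y_j$, and to match coefficients against the target $(n-1)a_n + (n-2)a_{n-1} - 2\sum_{s \leq n-2} a_s$.

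\textbf{Step 3 (equality).} When $\lambda = 0$ and all $a^k = 0$, I would invoke the explicit formula for $\bar{a}_1$ quoted from \cite[Remark 3.22]{Rank2affine}. The $\lambda_1$ term becomes $-2\sum a_s$. Each term of the third family becomes $-2a_{k-2} - \cdots - 2a_1$. Both are at most the $n = 2$ member $a_2 \geq 0$ of the first family, so the maximum in that formula is exactly the right-hand side of \eqref{equation:bara1}. Alternatively, equality can be seen directly: in this case the polytope is lower, and taking $n$ to be a level where the diagonal is active ($X_n = 0$) should make every inequality in Steps 1--2 tight.
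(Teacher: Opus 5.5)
Your Step~1 computation is correct. Step~2, however, is the entire content of the inequality for $n \geq 3$, and the proposal leaves it unproven; you even leave open whether it holds. It is true, and it needs only $Y_j \leq 0$, not the fact that one of $X_k, Y_k$ vanishes at each level. Put $Y_1 := 0$. Then $Y_j - Y_{j-1} = (j-1)(\bar{a}_j - a_{j-1})$ for $j \geq 2$, and Abel summation gives
\[
\sum_{s=2}^{n-1} (\bar{a}_s - a_{s-1}) = \frac{Y_{n-1}}{n-2} + \sum_{j=2}^{n-2} \frac{Y_j}{j(j-1)} \leq 0 .
\]
Adding this to your $X_n + Y_{n-1} \leq 0$ gives
\[
X_n + \frac{n-1}{n-2}\, Y_{n-1} + \sum_{j=2}^{n-2} \frac{Y_j}{j(j-1)} = (n-1)a_n + (n-2)a_{n-1} - 2\sum_{s=1}^{n-2} a_s - \bar{a}_1 \leq 0 .
\]
This is exactly the positive combination the paper obtains. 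It starts from the $n$-th diagonal and eliminates $\bar{a}_{n-1}, \dots, \bar{a}_2$ one at a time using $Y_{n-1} \leq 0, \dots, Y_2 \leq 0$, which is your ``fallback''. So once Step~2 is filled in, your route coincides with the paper's.

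For the equality, quoting \cite[Remark 3.22]{Rank2affine} is logically admissible. Your check that the $\lambda_1$-term and the third family are dominated by $a_2$ is also right. But that formula already contains the inequality, so this route bypasses the diagonal-relation proof the lemma is meant to supply.

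Your direct alternative has two holes. First, every coefficient in the combination above is strictly positive, so equality at $n$ requires $X_n = 0$ \emph{and} $Y_j = 0$ for all $2 \leq j \leq n-1$. An arbitrary active level does not suffice. You must take the smallest $n$ with $X_n = 0$, so that $X_j < 0$ forces $Y_j = 0$ for $j < n$.

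Second, you must show that some $X_n$ vanishes at all, and this is the only place $\lambda = 0$ and $a^k = 0$ are used. These hypotheses give $\mu^0 = \mu_N$ for some $N \geq 2$. Walking along the left boundary from $\omu_{N-1}$ up to $\omu^0 = \mu^0$, each edge is $\bar{a}_s(\alpha_0^\vee + (s-1)\delta)$, $|\olambda|\,\delta$, or $\bar{a}^s(\alpha_1^\vee + (s-1)\delta)$, and each pairs nonnegatively with $\omega_0$. Hence $X_N = \langle \mu_N - \omu_{N-1}, \omega_0 \rangle \geq 0$, so $X_N = 0$. This is the paper's argument ruling out the case where only the $Y_k$ are active.
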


\begin{proof}
Let $k \geq 2$. Consider the second term in the maximum of the $k+1$ diagonal. As this term is at most 0, we have
\begin{align*}
\bar{a}_1 &\geq k a_{k+1} + (k-1) a_{k} + \cdots + 2a_3 + a_2 - 2\bar{a}_2 - \cdots - k \bar{a}_{k}.
\end{align*}
Now consider the $k^\text{th}$ diagonal. As the first term is at most 0, then isolating for $\bar{a}_k$, we have
\begin{align}\label{equation:secondaffinediagonal}
\bar{a}_{k} &\leq \frac{1}{k-1} \left( a_1 + 2a_2 + \cdots + (k-1) a_{k-1} - (k-2) \bar{a}_{k-1} - \cdots - 2 \bar{a}_3 - \bar{a}_2 \right)
\end{align}
Then 
\begin{align*}
\bar{a}_1 \geq k a_{k+1} + (k-1) a_{k} + \sum_{i=1}^{k-1} \left((i-1) - \frac{ik}{k-1} \right) a_{i}  -\sum_{i=2}^{k-1} \left(i - \frac{(i-1)k}{k-1} \right) \bar{a}_i.
\end{align*} By iterating this technique of removing the largest $\bar{a}_{k-n}$ by using the $k-n$ diagonal to find $\bar{a}_{k-n} \leq \frac{1}{k-n-1} \left( \sum_{i=1}^{k-n-1} i a_i - \sum_{i=2}^{k-n-1} (i-1)\bar{a}_i \right)$, this simplifies to
\[
\bar{a}_1 \geq ka_{k+1} + (k-1) a_k  -2\sum_{i=1}^{k-1}a_{i}.
\]
So we have shown $\bar{a}_1$ is larger than the maximum in (\ref{equation:bara1}). 

Suppose the right Lusztig data are $(a_n, 0,0)_{n \in \N}$. From the diagonal relations, either there exists $k \in \N$ such that $\langle \mu_k - \omu_{k-1}, \omega_0 \rangle=0$ or $\langle \omu_k - \mu_{k-1}, \omega_1 \rangle =0$ for every $k$. 

Suppose we are in the first case and $n$ is the smallest number such that $\langle \mu_n - \omu_{n-1}, \omega_0 \rangle=0$. Using induction, it is easy to see that $\bar{a}_k = a_{k+1}$ for all $k \leq n$. By assumption, the second term in the $n^\text{th}$ diagonal is zero and hence $\bar{a}_1 = (n-1)a_n + (n-2) a_{n-1} - 2a_{n-2} - \cdots -2 a_2 - 2a_1$. 

Suppose we are in the second case so that $\langle \omu_k - \mu_{k-1}, \omega_1 \rangle=0$ for every $k$ and hence $\bar{a}_k = a_{k+1}$ for every $k$. As these polytopes are finite, there exists an $n$ such that $a_n \neq 0$ but $a_{n+i} =0$ for every $i \geq 1$. i.e. $\mu_n$ is the highest vertex on the right side of $L(P)$. It follows that $\bar{a}_{n-1} =a_{n} \neq 0$ but $\bar{a}_{n+i}=0$ for all $i \geq 0$ so $\omu_{n+i} = \omu_{n-1}$ for all $i \geq 0$. Thus $\omu_{n-1}$ is the highest vertex on the left side of $L(P)$. 

Now, consider the upper left vertices. Let $i$ be the smallest number such that $\omu^i \neq \mu_n$ (as the right Lusztig data are $(a_n, 0,0)_{n \in \N}$, $\omu^0 = \mu_n$ so $i >0$). Then $\mu_n - \omu^i = \bar{a}^i (\alpha^\vee_1 + (i-1) \delta)$ and thus $\langle \mu_n - \omu^i, \omega_0 \rangle \geq 0$. 

Let $j$ be the largest number such that $\omu^j \neq \omu_{n-1}$. Then $\omu^j - \omu_{n+1} = \bar{a}^{j+1}(\alpha^\vee_1 + j \delta)$ and $ \langle \omu^j - \omu_{n-1}, \omega_0 \rangle  \geq 0$. Similarly, for any $k$, $\omu^{k-1} - \omu^k = \bar{a}^k (\alpha^\vee_1 + (k-1) \delta)$ so $\langle \omu^{k-1} - \omu^k , \omega_0 \rangle \geq 0$. It follows that
\begin{align*}
\langle \mu_n - \omu_{n-1}, \omega_0 \rangle = \left\langle \mu_n -\omu^i + \sum_{k=i}^{j-1} \left( \omu^k - \omu^{k+1}\right) + \omu^{j} - \omu_{n-1},  \omega_0 \right\rangle  \geq 0.
\end{align*}
As the $n^\text{th}$ diagonal relation guarantees $\langle \mu_n - \omu_{n-1}, \omega_0 \rangle \leq 0$, then necessarily $\langle \mu_n - \omu_{n-1}, \omega_0 \rangle =0$. This means that only the first case is possible and so  $\bar{a}_1 = (m-1)a_m + (m-2) a_{m-1} - 2a_{m-2} - \cdots 2 a_2 - 2a_1$ for the smallest $m$ such that $\langle \mu_m - \omu_{m-1}, \omega_0 \rangle = 0$. 
\end{proof}

\subsection{Upper and lower polytopes}\label{section:highestvertexpoly}

In this section, for $w \in W$, we will use the shorthand 
\begin{align*}
\mu_w : = \begin{cases} \mu_k, & \text{ if } w = w_k \\
\omu_k, & \text{ if } w = \ow_k \end{cases} && \mu^w : = \begin{cases} \omu^k, & \text{ if } w = w_k \\
\mu^k, & \text{ if } w = \ow_k \end{cases} 
\end{align*}

\begin{definition}
A \emph{lower affine MV polytope of highest vertex at most $w$} is an affine MV polytope such that $\mu_w = \mu^0$. Denote the set of lower affine MV polytopes of highest vertex $w$ by $\Pw$. 
\end{definition}

\begin{definition}
An \emph{upper affine MV polytope of lowest vertex $w$} is an affine MV polytope such that $\mu^w = \mu_0$. 
\end{definition}

As upper and lower polytopes are closely related, we will only consider lower polytopes for the rest of the paper. First, the condition $\mu_w= \mu^0$ will induce some conditions on the vertices on the other side of the polytope. 

\begin{lemma}\label{lemma:mu_m-1=mu^1}
Consider $P \in \Pw$. If $w = w_m$ for some $m \in \N$, then $\omu_{m-1} = \omu^1$. If $w = \ow_m$ for $m \in \N$, then $\mu_{m-1} = \mu^1$. 
\end{lemma}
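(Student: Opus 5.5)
The plan is to use the $m$-th lower diagonal relation together with a positivity argument along the left boundary of $P$. I treat the case $w = w_m$; the case $w=\ow_m$ is identical after swapping the roles of $\alpha_0^\vee,\alpha_1^\vee$, $\omega_0,\omega_1$ and barred and unbarred data.

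First, assume $m \geq 2$. By condition \ref{condition:lower} of Definition \ref{definition:affinepolytope} at index $m$, we have $\langle \mu_m - \omu_{m-1}, \omega_0 \rangle \leq 0$. Since $P \in \Pw$, we have $\mu_m = \mu^0 = \omu^0$. I then decompose $\mu_m - \omu_{m-1}$ along the left boundary of $P$:
\[
\mu^0 - \omu_{m-1} = \sum_{k \geq m} (\omu_k - \omu_{k-1}) + (\omu^\infty - \omu_\infty) + \sum_{k \geq 2} (\omu^{k-1} - \omu^k) + (\omu^0 - \omu^1).
\]
Each summand is handled as follows.
\begin{itemize}
\item $\omu_k - \omu_{k-1} = \bar a_k(\alpha_0^\vee + (k-1)\delta)$ pairs with $\omega_0$ to give $k\bar a_k \geq 0$. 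This vanishes only if $\bar a_k = 0$.
\item $\omu^{k-1} - \omu^k = \bar a^k(\alpha_1^\vee + (k-1)\delta)$ pairs with $\omega_0$ to give $(k-1)\bar a^k \geq 0$. For $k\geq 2$ this vanishes only if $\bar a^k=0$.
\item The middle edge is $\omu^\infty - \omu_\infty = |\olambda|\delta$, which pairs to $|\olambda| \geq 0$.
\item The last edge $\omu^0 - \omu^1 = \bar a^1 \alpha_1^\vee$ pairs to $0$ with $\omega_0$.
\end{itemize}
So $\langle \mu_m - \omu_{m-1}, \omega_0\rangle$ is a sum of non-negative terms. Combined with the diagonal inequality, it equals $0$, and every term must vanish.

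From this, $\bar a_k = 0$ for $k \geq m$, $|\olambda| = 0$, and $\bar a^k = 0$ for $k \geq 2$. Hence $\omu_{m-1} = \omu_\infty = \omu^\infty = \omu^1$, which is the claim.

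The case $m = 1$ is not covered by the diagonal relations, since these start at $k=2$. Here I argue directly, using that $\mu_1 = \mu^0$ means $\mu^0 - \mu_0 = a_1 \alpha_1^\vee$, so $\langle \mu^0 - \mu_0, \omega_0 \rangle = 0$. The same decomposition, now starting from $\omu_0 = \mu_0$, writes $\mu^0 - \mu_0$ as a sum of edges whose $\omega_0$-pairings are all non-negative. The first edge pairs to $\bar a_1 \geq 0$ and the final $\alpha_1^\vee$ edge pairs to $0$. Hence all edges except the last have length zero, so $\omu_0 = \omu^1$.

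The main point needing care is the middle edge $\omu^\infty - \omu_\infty$. I must confirm that it is a non-negative multiple of $\delta$, which is exactly the decoration condition $\omu^\infty - \omu_\infty = |\olambda|\delta$ in the definition of decorated GGMS polytopes. With that, the positivity argument is a direct check.
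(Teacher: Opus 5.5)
Your proof is correct and follows the paper's argument: you decompose $\mu^0-\omu_{m-1}$ along the left boundary, note that every edge pairs non-negatively with $\omega_0$, and use the $m$-th lower diagonal inequality $\langle \mu_m-\omu_{m-1},\omega_0\rangle\le 0$ to force $\bar a_k=0$ for $k\ge m$, $|\olambda|=0$ and $\bar a^k=0$ for $k\ge 2$. Your separate treatment of $m=1$ is a harmless addition that the paper leaves implicit; there the needed equality $\langle \mu_1-\omu_0,\omega_0\rangle=0$ holds automatically.
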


\begin{proof}
Without loss of generality, suppose that $w = w_m$. First, notice that 
\begin{align*}
\mu^0 - \omu_{m-1} &= \sum_{s=1}^\infty (\omu^{s-1} - \omu^s) + (\omu^\infty - \omu_\infty) + \sum_{s=m-1}^\infty (\omu_{s+1} - \omu_{s}) \\
& = \sum_{s=1}^\infty (\bar{a}^s (\alpha^\vee_1 + (s-1)\delta)) + |\overline{\lambda}| \delta + \sum_{s=m-1}^\infty (\bar{a}_{s+1}(\alpha^\vee_0 + s\delta)).
\end{align*}
Thus, $\langle \mu^0 - \omu_{m-1}, \omega_0 \rangle = \sum_{s=1}^\infty (s-1) \bar{a}^s + |\delta| + \sum_{s=k}^\infty (s+1) \bar{a}_{s+1} \geq 0$ as each term is non-negative. But by the lower diagonals \ref{condition:lower} in Definition \ref{definition:affinepolytope}, 
\[
\langle \mu^0 - \omu_{m-1}, \omega_0 \rangle = \langle \mu_m - \omu_{m-1}, \omega_0 \rangle \leq 0.
\]
Thus $\langle \mu^0 - \omu_{m-1}, \omega_0 \rangle= \sum_{s=1}^\infty (s-1) \bar{a}^s + |\delta| + \sum_{s=m-1}^\infty (s+1) \bar{a}_{s+1} =0$ and hence $\bar{a}^s=0$ for $s \geq 2$, $|\delta|=0$ and $\bar{a}_{t}=0$ for $t \geq m-1$. It follows that $\omu_{m-1} = \omu^1$. 
\end{proof}

As in the finite case, we can consider the dual fan of $P \in \Pw$. Recall the affine Weyl fan defined by the maximal cones (\ref{equation:affineweylfan}).  Let $\ell(w) =m$. Define the cones
\begin{align*}
D_{m} &= \begin{cases} \left(\bigcup_{k \geq m} C_{w_k}\right) \cup C_{\infty} \cup \left(\bigcup_{k \in \N} C^{w_k}\right), & \text{ if } w=w_m \\
\left( \bigcup_{k \geq m} C_{\ow_k}\right) \cup C_{\overline{\infty}} \cup\left( \bigcup_{k \in \N} C^{\ow_k} \right), & \text{ if } w=\ow_m
 \end{cases}\\
 D_{m-1} &= \begin{cases} \left( \bigcup_{k \geq m-1} C_{\ow_k} \right) \cup C_{\overline{\infty}} \cup \left( \bigcup_{k \geq 1} C^{\ow_k} \right), & \text{ if } w=w_m \\
\left( \bigcup_{k \geq m-1} C_{w_k} \right) \cup C_\infty \cup\left( \bigcup_{k \geq 1} C^{w_k}\right), & \text{ if } w=\ow_m
 \end{cases}
\end{align*}
Note that $D_m$ will always contain the cone $C^e$, as well as all the cones labelled by the Weyl elements associated to the vertices that lie between $\mu_w$ and $\mu^0$ on a minimal path in $P$.  
Hence the dual fan $\mathcal{N}(P)$ is a coarsening of the fan generated by the maximal cones:
\begin{align*}
 D_{m}, && D_{m-1}, && C_{w_k} \text{ for } 1\leq k \leq m-1, && C_{\ow_k} \text{ for } 1 \leq k \leq m-2.
\end{align*}
For an example, see Figure \ref{figure:affinedualfan}. 

\begin{figure}[ht] 
\begin{subfigure}{.49\linewidth}
\centering
$
\begin{tikzpicture}[scale=.55]
\node[vertex] (O) at (0,0) [label=below:$\mu_0$] {};
\node[vertex] (U1) at (1.2,0.3)[label=below right:$\mu_1$] {};
\node[vertex] (U2) at (3, 1.65)[label=below right:$\mu_2$] {};
\node[vertex] (U3) at (4.2, 3.15)[label=below right:$\mu_3$] {};
\node[vertex] (U4) at (4.8, 4.2) [label=right:$\mu_4$] {};
\node[vertex] (W3) at (-3.6, 2.1) [label=left:$\omu_3$]{};
\node[vertex] (W2) at (-3, 1.35)[label=below left:$\omu_2$] {};
\node[vertex] (W1) at (-1.8, 0.45)[label=below left:$\omu_1$] {};

\draw[thick] (O) -- (U1) -- (U2) -- (U3) -- (U4)  -- (W3) -- (W2) -- (W1) -- (O);

\node at (4*.15/.62, -4*0.6/.62) {};
\node at (-7*.15/.62, 7*0.6/.62) {};

\end{tikzpicture}
$
\caption{A lower polytope in $P_{w_4}$}
\end{subfigure}
\begin{subfigure}{.49\linewidth}
\centering
$
\begin{tikzpicture}

\draw[-{{Classical TikZ Rightarrow}[scale=2]}] (0,0) -- (3*.15/.62, -3*0.6/.62) node[below]{$\gamma_0$};
\draw[-{{Classical TikZ Rightarrow}[scale=2]}] (0,0) -- (3*.45/.75, -3*0.6/.75) node[below]{$\gamma_1$};
\draw[-{{Classical TikZ Rightarrow}[scale=2]}] (0,0) -- (3*.75/.96, -3*0.6/.96) node[below right]{$\gamma_2$};
\draw[-{{Classical TikZ Rightarrow}[scale=2]}] (0,0) -- (3*1.05/1.21, -3*0.6/1.21) node[right]{$\gamma_3$};

\draw[-{{Classical TikZ Rightarrow}[scale=2]}] (0,0) -- (-3*.15/.62, -3*0.6/.62) node[below]{$\ogamma_0$};
\draw[-{{Classical TikZ Rightarrow}[scale=2]}] (0,0) -- (-3*.45/.75, -3*0.6/.75) node[below]{$\ogamma_1$};
\draw[-{{Classical TikZ Rightarrow}[scale=2]}] (0,0) -- (-3*.75/.96, -3*0.6/.96) node[below left]{$\ogamma_2$};

\draw[-{{Classical TikZ Rightarrow}[scale=2]}] (0,0) -- (-2*.15/.62, 2*0.6/.62) node[above]{$-\ogamma_0$};

\node at (0, -2*0.6/.62) {$C_e$};
\node at (2*.3/.67, -2*.6/.67) {$C_{w_1}$};
\node at (2.25*.6/.85, -2.25*.6/.85) {{\footnotesize $C_{w_2}$}};
\node at (2.5*.9/1.08, -2.5*0.6/1.08) {{\footnotesize $C_{w_3}$}};

\node at (-2*.3/.67, -2*.6/.67) {$C_{\ow_1}$};
\node at (-2.25*.6/.85, -2.25*.6/.85) {{\footnotesize $C_{\ow_2}$}};

\node at (-2*1.05/1.21, 2*0.6/1.21-.5) {$D_{3}$};

\node at (2*.75/.96, 2*0.6/.96-.5) {$D_{4}$};

\end{tikzpicture}
$
\caption{The dual fan of a polytope in $P_{w_4}$}
\end{subfigure}
\caption{A polytope in $\Pw$ and its dual fan}
\label{figure:affinedualfan}
\end{figure}

\begin{remark}
For $P$ an affine MV polytope, every lower MV polytope $L(P) = \text{conv}\{ \mu_k, \omu_k: k \in \N\}$ is a GGMS polytope by Lemma \ref{lemma:lumGGMS}. It is easy to see that $L(P)$ is in fact an affine MV polytope as $L(P)$ will inherit all the conditions in Definition \ref{definition:affinepolytope} from $P$. 

As an affine MV polytope, the vertex $\mu^0$ of $L(P)$ must be such that $\mu^0 = \mu_k$ or $\omu_k$ for some $k \in \N$. This $k$ must also be such that $\mu_\infty = \mu_k$ or $\omu_\infty = \omu_k$. Thus $L(P) \in \Pw$ for some $w \in W$. 
\end{remark}

Let $\ui = (i_1, \dots, i_m)$ be such that $w = s_{i_1} \dots s_{i_m}$ is a reduced word. 

\begin{lemma}\label{lemma:BZdata}
The collection $(M_\gamma)_{\gamma \in \Gamma}$ is the lower BZ datum of a lower affine MV polytope of highest vertex $w$ if each $M_\gamma \in \Z$ and:
\begin{enumerate}[label=(\roman*)]
 \item \label{condition:positivity}   For $1 \leq k \leq m$, $M_{\overline{k-1}} + M_{\overline{k+1}} \leq 2M_{\overline{k}}$ and $M_{k-1} + M_{k+1} \leq 2M_{k}$. 
 \item \label{condition:edgeequalities} For $k \geq m+1$, $M_{\overline{k-1}} + M_{\overline{k+1}} = 2M_{\overline{k}}$ and  $M_{k-1} + M_{k+1} = 2M_{k}$. 
 \item \label{condition:edgeequalities2} For $k=m$, if $i_1 =1$ then $M_{\overline{m-1}} + M_{\overline{m+1}} = 2 M_{\overline{m}}$. If $i_1 =0$, then $M_{m-1} + M_{m+1} =2 M_m$.
 \item \label{condition:diagonal} For $2 \leq k$, $ k M_{\overline{k}}+   k M_k = \min \lbrace  k M_{\overline{k-1}} + M_{\overline{k}} + (k-1) M_{k+1}, k M_{k-1} + M_{k} + (k-1) M_{\overline{k+1}}  \}$.
\end{enumerate}
Conversely, for an affine MV polytope $P$, the lower polytope $L(P)$ is a lower affine MV polytope of highest vertex $w$ only if the lower BZ data $(M_\gamma)_{\gamma \in \Gamma}$ satisfy the conditions \ref{condition:positivity} - \ref{condition:diagonal}. 
\end{lemma}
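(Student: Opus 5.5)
We translate each condition into a statement about the lower boundary of the polytope, using the dictionary already set up. The relations $a_k = 2M_k - M_{k-1} - M_{k+1}$ and $\bar a_k = 2M_{\overline{k}} - M_{\overline{k-1}} - M_{\overline{k+1}}$ do most of the work. Condition \ref{condition:positivity} says exactly that the edge lengths $a_k,\bar a_k$ are non-negative for $k\le m$. Condition \ref{condition:edgeequalities} says $a_k=\bar a_k=0$ for $k\ge m+1$. Condition \ref{condition:edgeequalities2} adds the single vanishing $\bar a_m=0$ when $i_1=1$ (so $w=w_m$), respectively $a_m=0$ when $i_1=0$. Condition \ref{condition:diagonal} is, by Proposition \ref{proposition:diagonalrelations}, precisely the lower diagonal relation \ref{condition:lower} of Definition \ref{definition:affinepolytope}.

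\emph{Sufficiency.} Given integers $M_\gamma$ satisfying \ref{condition:positivity}--\ref{condition:diagonal}, we define $\mu_k,\omu_k$ by the vertex formulas in terms of hyperplane data; these points are integral. By \ref{condition:positivity} and \ref{condition:edgeequalities} the boundary paths $\mu_0,\mu_1,\dots$ and $\omu_0,\omu_1,\dots$ have edges that are non-negative multiples of $\alpha_1^\vee+(k-1)\delta$, respectively $\alpha_0^\vee+(k-1)\delta$. They stabilize after step $m$, and after step $m-1$ on the side indicated by \ref{condition:edgeequalities2}. Set $P=\mathrm{conv}\{\mu_k,\omu_k\}$ and take the upper vertices to lie on the segment from $\mu_\infty$ to $\omu_\infty$, with $\mu^0=\mu_w$. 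We then need the following.
\begin{enumerate}[label=(\alph*)]
\item $\mu_w-\mu_{w'}$ is a multiple of a simple coroot, where $w'$ is the other length-$(m-1)$ or length-$m$ endpoint. This is the step that makes $P$ a GGMS polytope.
\item All upper Lusztig data vanish, with $\lambda=\olambda=0$, so that the upper diagonals and conditions (i)--(ii) of Definition \ref{definition:affinepolytope} hold trivially.
\end{enumerate}
For (a) we use the $(m+1)$-st or $m$-th diagonal relation, exactly as in the proof of Lemma \ref{lemma:lumGGMS}. Once the vertices are stationary, that relation forces one pairing with $\omega_0$ or $\omega_1$ to vanish, so the closing edge lies in $\Z\alpha_0^\vee$ or $\Z\alpha_1^\vee$. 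Convexity follows because consecutive edge directions turn monotonically, as in the affine Weyl fan. Finally, by the hyperplane description lemma, $P$ has lower hyperplane data $(M_\gamma)$, and we have $\mu_w=\mu^0$.

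\emph{Necessity.} Let $L(P)\in\P_w$. The edge lengths are non-negative, which gives \ref{condition:positivity}. Since $\mu_w=\mu^0$ is the top of the polytope, $\mu_k=\mu_w$ for $k\ge m$, so $a_k=0$ for $k\ge m+1$. On the other side, Lemma \ref{lemma:mu_m-1=mu^1} gives $\omu_{m-1}=\omu^1$, hence $\omu_k$ is constant for $k\ge m-1$. Therefore $\bar a_k=0$ for $k\ge m$, which gives \ref{condition:edgeequalities} and \ref{condition:edgeequalities2} in the case $w=w_m$; the case $w=\ow_m$ is symmetric. Condition \ref{condition:diagonal} is Proposition \ref{proposition:diagonalrelations} applied to $L(P)$, which inherits the lower diagonals.

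\emph{Main obstacle.} The hardest part is step (a) of sufficiency. We must check that the diagonal relations, together with the stabilization conditions, actually pin down which simple-coroot direction the top edge $\mu_w\omu_{m-1}$ takes, and that this edge is consistent with $\mu^0=\mu_w$ being a genuine vertex rather than lying strictly inside an edge. We would first handle this by a direct computation of $\langle \mu_m-\omu_{m-1},\omega_0\rangle$ from the $m$-th diagonal. We would then mimic the argument of Lemma \ref{lemma:mu_m-1=mu^1} in reverse.
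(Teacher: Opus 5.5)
Your necessity argument is correct and matches the paper's. The non-negative edge lengths give (i), $\mu_k=\mu_w$ for $k\ge m$ together with Lemma \ref{lemma:mu_m-1=mu^1} gives (ii)--(iii), and Proposition \ref{proposition:diagonalrelations} gives (iv).

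The sufficiency half breaks at exactly the step you single out, and (i)--(iv) alone cannot repair it. Take $w=w_m$ with $m\ge 2$. By (ii)--(iii), $\mu_{m+1}=\mu_m$ and $\omu_{m+1}=\omu_m=\omu_{m-1}$. The $(m+1)$-st diagonal then reads $\max\{\langle \omu_{m-1}-\mu_m,\omega_1\rangle,\langle\mu_m-\omu_{m-1},\omega_0\rangle\}=0$. The $m$-th diagonal, which you planned to use, only bounds $\langle\mu_m-\omu_{m-1},\omega_0\rangle\le 0$; in the bad case it forces $a_m=0$ but does not exclude it. So besides the good alternative $\mu_m-\omu_{m-1}\in\Z_{\ge0}\alpha_1^\vee$, the alternative $\omu_{m-1}-\mu_m\in\Z_{>0}\alpha_0^\vee$ is also allowed. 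In that case $\omu_{m-1}$, not $\mu_w$, is the top vertex.

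A concrete instance is the segment $P=\mathrm{conv}\{0,c\alpha_0^\vee\}$ with $c>0$. Its lower edge lengths are $a_k=0$ for all $k$, $\bar a_1=c$, and $\bar a_k=0$ for $k\ge2$, so (i)--(iii) hold for $w=w_m$. Condition (iv) holds because each lower diagonal equals $\max\{0,-c\}=0$. Yet $\mu_{w_m}=0\neq c\alpha_0^\vee=\mu^0$. In fact no affine MV polytope with this lower datum lies in $\P_{w_m}$: the vector $\mu^0-\omu_\infty=|\olambda|\delta+\sum_s\bar a^s(\alpha_1^\vee+(s-1)\delta)$ always has non-negative $\alpha_0^\vee$-coefficient, whereas $\mu_m-\omu_\infty=-c\alpha_0^\vee$. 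Running Lemma \ref{lemma:mu_m-1=mu^1} ``in reverse'' is circular, since that lemma takes $\mu_w=\mu^0$ as its hypothesis.

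So the ``if'' direction needs one more hypothesis. For $w=w_m$ with $m\ge2$, the minimum in (iv) at $k=m$ must be attained by $mM_{\overline{m-1}}+M_{\overline m}+(m-1)M_{m+1}$. By the computation in Proposition \ref{proposition:diagonalrelations}, this is exactly the condition $\langle\mu_m-\omu_{m-1},\omega_0\rangle=0$; the case $w=\ow_m$ is symmetric. With this hypothesis, the $(m+1)$-st diagonal gives $\bar a^1:=\langle\mu_m-\omu_{m-1},\omega_1\rangle\ge0$, and your construction goes through. Note that $\bar a^1$ is then the one nonzero upper datum, so your item (b) should not claim that all upper Lusztig data vanish; the upper diagonals still hold trivially.

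The paper's own argument does not supply this step either. Its sufficiency paragraph only re-derives the diagonals for $k>m$, which (iv) already assumes. Theorem \ref{theorem:BZ} then simply asserts $\mu_m-\omu_{m-1}=\bar a^1\alpha_1^\vee$. Nothing is lost downstream, however. For the tropical minors, the case $k=m$ of Lemma \ref{lemma:LWdiagonals} shows precisely that this entry attains the minimum, because the other entry contains $M_{\overline{m+1}}=\infty$.
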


\begin{proof}
Without loss of generality, suppose $w = w_m$ so that $i_1=1$. 

Let $P$ be an affine MV polytope and suppose that $L(P)$ is a lower affine MV polytope of highest vertex $w$. Then $(M_\gamma)_{\gamma\in\Gamma}$ is a lower BZ datum and the conditions \ref{condition:positivity} and \ref{condition:diagonal} follow directly. We need to show that the edge equalities \ref{condition:edgeequalities} hold exactly when $(M_\gamma)_{\gamma \in \Gamma}$ is the lower BZ datum of a lower polytope of highest vertex $w$. 

By definition of $L(P)$, $\mu_w = \mu^0$, so that for every $k \geq m +1$, $\mu_k = \mu_w$. In particular, $\mu_{k} = \mu_{k-1}$ so that the Lusztig data $a_k =0$. Thus $M_{k-1} + M_{k+1} = 2M_k$ for $k \geq m+1$. 

If $k \geq m$, the edge equality $M_{\overline{k-1}} + M_{\overline{k+1}} = 2M_{\overline{k}}$ is equivalent to $\omu_{k-1} = \omu_{k}$. By Lemma \ref{lemma:mu_m-1=mu^1}, $\mu_w = \mu^0 \implies \omu_{m-1} = \omu^1$ and hence the rest of the edge equalities in \ref{condition:edgeequalities} hold. 

Suppose that $(M_\gamma)_{\gamma \in \Gamma}$ satisfies \ref{condition:positivity} - \ref{condition:diagonal}.  All we need to show is that the edge equalities in \ref{condition:edgeequalities} imply the diagonal relations for \ref{condition:diagonal} for $k>m$ so that $(M_\gamma)_{\gamma \in \Gamma}$ is a lower BZ datum. 

For $k\geq m$ suppose the diagonal relation holds: 
\begin{align*}
-\min \{ (k-1) M_{\overline{k+1}} + kM_{k-1} + M_k, (k-1) M_{k+1} + k M_{\overline{k-1}} + M_{\overline{k}} \} + k M_k + k M_{\overline{k}}=0.
\end{align*}

By adding $k(2M_{\overline{k+1}} -M_{\overline{k}} - M_{\overline{k+2}}) - k (2M_k - M_{k-1} - M_{k+1})$ to the first term in the diagonal, this term becomes
\begin{align*}
-kM_{\overline{k+2}}- (k+1)M_k - M_{k+1} + (k+1) M_{\overline{k+1}} +(k+1)M_{k+1}.
\end{align*}
Similarly, by adding $k(2M_{k+1} -M_k - M_{k+2}) - k (2M_{\overline{k}} - M_{\overline{k-1}} - M_{\overline{k+1}})$ to the second term, we get 
\[
 -k M_{k+2} -(k+1) M_{\overline{k}} - M_{\overline{k+1}}+ (k+1) M_{k+1} + (k+1) M_{\overline{k+1}}.
\]
Thus the $k+1^\text{st}$ diagonal equation also holds:
\begin{align*}
\min \{ kM_{\overline{k+2}} + (k+1)M_{k} + M_{k+1}, k M_{k+2} +(k+1) M_{\overline{k}} + M_{\overline{k+1}} \} = (k+1) M_{k+1} + (k+1) M_{\overline{k+1}}
\end{align*}
and by induction the diagonals hold for all $k$. 
\end{proof}

Condition \ref{condition:positivity} guarantees that the sides of the polytope are non-negative and conditions \ref{condition:edgeequalities}-\ref{condition:edgeequalities2} guarantee that the highest vertex is at most $\mu_w$. Condition \ref{condition:diagonal} guarantees the diagonal relations are satisfied. 

Let $\mathcal{M}^w_\Gamma$ be the set of all lower BZ data $(M_\gamma)_{\gamma\in \Gamma}$ that satisfy conditions \ref{condition:positivity}, \ref{condition:edgeequalities} and \ref{condition:diagonal}. Then we have a bijection between these BZ datum and polytopes of highest vertex $w$. 

\begin{theorem}\label{theorem:BZ}
There is a bijection $\Pw \rightarrow \mathcal{M}^w_\Gamma$ by $P \mapsto (M_\gamma)_{\gamma \in \Gamma}$ as defined in Definition \ref{definition:affineBZdata}. 
Hence an MV polytope of highest vertex $w$ is completely determined by its lower BZ data. 
\end{theorem}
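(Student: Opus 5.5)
The plan is to show that the map $P \mapsto (M_\gamma)_{\gamma\in\Gamma}$ is well defined into $\mathcal{M}^w_\Gamma$, injective, and surjective. Throughout, we may assume without loss of generality that $w = w_m$, as in the proof of Lemma \ref{lemma:BZdata}.

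\emph{Well-definedness.} For $P \in \Pw$, the polytope is its own lower polytope, since $\mu_w = \mu^0$ forces $L(P) = P$. The converse direction of Lemma \ref{lemma:BZdata} then shows that the lower hyperplane data of $P$ satisfy conditions \ref{condition:positivity}, \ref{condition:edgeequalities} and \ref{condition:diagonal}. Integrality of $M_\gamma$ holds because the vertices lie in the coroot lattice and the chamber weights are integral weights.

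\emph{Injectivity.} By the vertex formulas after Definition \ref{definition:affineBZdata}, the lower BZ datum determines every $\mu_k$ and $\omu_k$. For $P\in\Pw$, we have $\mu^0=\mu_m$. By Lemma \ref{lemma:mu_m-1=mu^1}, $\omu^1=\omu_{m-1}$, and the proof of that lemma shows $\bar a^s=0$ for $s\ge 2$ and $\olambda=0$. On the right side, $\mu_k=\mu^0$ for all $k\ge m$, so all $\mu^k$ coincide with $\mu^0$ and $\lambda=0$. Hence all vertices of $P$ are among the $\mu_k,\omu_k$ (with $\mu^0,\omu^1$ included). Since $P$ is their convex hull and the decorations vanish, $P$ is recovered from the datum. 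Alternatively, the right Lusztig data $(a_n,0,0)$ are read off via $a_k = 2M_k - M_{k-1}-M_{k+1}$, and \cite[Theorem 3.11]{Rank2affine} gives uniqueness.

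\emph{Surjectivity.} This is the main step. Given $(M_\gamma)\in\mathcal{M}^w_\Gamma$, define $a_k = 2M_k-M_{k-1}-M_{k+1}$. These are nonnegative integers by \ref{condition:positivity}, and they vanish for $k\ge m+1$ by \ref{condition:edgeequalities}. Set $\lambda=0$ and $a^k=0$. By \cite[Theorem 3.11]{Rank2affine}, there is a unique affine MV polytope $P$ with right Lusztig data $(a_n,0,0)$, and this $P$ has $\mu^0=\mu_m$, so $P\in\Pw$. Its lower BZ datum $(M'_\gamma)$ lies in $\mathcal{M}^w_\Gamma$ by the first step and shares the $\gamma_k$-coordinates with $(M_\gamma)$, after normalizing $M_0 = 0 = M_{\bar 0}$ (i.e.\ $\mu_0=0$). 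The obstacle is to show $M'_{\bar k}=M_{\bar k}$ for all $k$, that is, that the barred data are forced by the unbarred data through condition \ref{condition:diagonal} together with the edge equalities.

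For this, I would argue inductively, as in the lemma computing $\bar a_1$. First show $\bar a_1$ is determined: the diagonal relations give the lower bound \eqref{equation:bara1}. The argument in the case $\lambda=0$, $a^k=0$ then shows equality. The inputs of that argument are that the $m$th diagonal is active on the right, which comes from the edge equalities \ref{condition:edgeequalities} closing the polytope at $\mu_m$, and the inductive relations $\bar a_k = a_{k+1}$ up to the first active index. Once $\bar a_1$ (equivalently $M_{\bar 2}$, given $M_{\bar 1}=\langle \mu_0, \omega_1\rangle$) is fixed, each diagonal equation for $k\ge 2$ is linear in the single new unknown $M_{\overline{k+1}}$ in one branch of the min. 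Moreover, the other branch either determines it or forces it via the edge equalities for $k\ge m-1$. Thus $M_{\overline{k+1}}$ is determined inductively, so $(M_\gamma)=(M'_\gamma)$.

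The delicate point is the case analysis of which branch of the min is attained, including ruling out the case where only the first branch is active for all $k$. I would handle this exactly as in the proof of the $\bar a_1$ lemma, using convexity to show that the $m$th right diagonal must be active.
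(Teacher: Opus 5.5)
Your well-definedness and injectivity arguments are fine, but surjectivity has a genuine gap, exactly at the point you flag.

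\textbf{The step that fails.} You claim the barred coordinates are forced by the unbarred ones through \ref{condition:diagonal} and the edge equalities. Equivalently, you need the $m$th diagonal to be active on the right, $\langle \mu_m-\omu_{m-1},\omega_0\rangle=0$. This does not follow from \ref{condition:positivity}--\ref{condition:diagonal}.

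Take $w=w_2$ and the lower data of the segment from $0$ to $c\alpha_0^\vee$, with $c>0$. Here $\mu_k=0$ for all $k$ and $\omu_k=c\alpha_0^\vee$ for $k\ge 1$. So $a_k=0$ for all $k$, $\bar a_1=c$, and $\bar a_k=0$ for $k\ge 2$.
\begin{itemize}
\item Conditions \ref{condition:positivity}--\ref{condition:edgeequalities2} are immediate, and every lower diagonal reads $\max\{0,-c\}=0$.
\item This datum has the same unbarred part as the one-point polytope in $\P_{w_2}$, but a different barred part.
\item Its second diagonal is active only on the left.
\item It is not the lower datum of any $P\in\P_{w_2}$: there $\mu^0=\mu_2=\mu_0$ would force $P$ to be a point.
\end{itemize}

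The convexity argument from the lemma establishing \eqref{equation:bara1} cannot exclude this. That argument runs down the upper-left boundary $\omu^i,\dots,\omu^j$ of an honest MV polytope carrying the given lower data. For an abstract $(M_\gamma)$, the only polytope you have is $P'$, whose data are $(M'_\gamma)$, so invoking it is circular.

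\textbf{The paper's route, and its own gap.} The paper does not build $P'$ from Lusztig data via \cite[Theorem 3.11]{Rank2affine} and then match coordinates. Instead it realizes the given $(M_\gamma)$ directly. It chooses upper data so that the upper part degenerates ($\mu^k=\mu_m$ and $\omu^k=\omu_{m-1}$ for $k\ge 1$), and the polygon is closed by the single edge from $\omu_{m-1}$ to $\mu_m$. The upper diagonals then hold trivially, and no uniqueness statement for barred data is needed.

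That construction tacitly uses the same fact you could not prove, namely $\mu_m-\omu_{m-1}\in\Z_{\ge 0}\alpha_1^\vee$. This is the content of the paper's assertion $(-2M^{\overline{1}}+M^{\overline{2}}+M^1)\alpha_1^\vee=\mu_m-\omu_{m-1}$. The example above shows this fails for $\mathcal{M}^w_\Gamma$ as defined, so the statement itself needs repair. One option is to build this extra condition into $\mathcal{M}^w_\Gamma$. The other is to enlarge $\Pw$ to admit polytopes like the segment, whose top vertex is $\mu_u$ for a smaller $u$.

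Once the extra condition is imposed, replace your inductive matching by the direct closing-up of the datum. Your injectivity argument can stay as it is.
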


\begin{proof}
By Lemma \ref{lemma:BZdata}, the map that sends $P$ to its lower BZ data is a map from $\Pw \rightarrow \mathcal{M}^w_\Gamma$. We need to show that a polytope $P \in \Pw$ is completely determined by its lower BZ data. 
 
Suppose $(M_\gamma)_{\gamma \in \Gamma}$ is a collection as in Lemma \ref{lemma:BZdata}. Without loss of generality, assume that $w = w_m$. Set
\begin{align*}
M^1 &= -(m+1)M_m + (m-1)M_{m+1},  & M^{\overline{1}} & = -m M_{m+1} - m M_m, \\
M^{\overline{2}} &= -(m+2) M_{m+1} - (m-2)M_m - M_{\overline{m}} + M_{\overline{m-1}}.
\end{align*}
We recursively define $M^{k+1} = 2M^{k} - M^{k-1}$ for $k \geq 1$ and $M^{\overline{k+1}} = 2M^{\overline{k}} - M^{\overline{k-1}}$ for $k \geq 2$. Thus we have a collection $(M^\gamma)_{\gamma \in \Gamma}$ of integers which satisfy the edge equalities and diagonal relations trivially.
 
Define the map from  $\mathcal{M}_\Gamma^w$ to convex polytopes where $(M_\gamma)_{\gamma \in \Gamma}$ is sent to the decorated polytope $P$ with BZ data $(M_\gamma, M^\gamma)_{\gamma \in \Gamma}$ and decoration $\lambda = \lambda' = 0$. We need to prove that this polytope is a lower affine MV polytope of highest vertex $w$. 

First, from the definition of $M^{\overline{2}}$, $(-2M^{\overline{1}} + M^{\overline{2}} + M^1) \alpha^\vee_1
= \mu_m - \omu_{m-1}$ so that $-2M^{\overline{1}} + M^{\overline{2}} + M^1 = \bar{a}^1 \in \Z$ and $P$ is a GGMS polytope. 

By Lemma \ref{lemma:BZdata}, $(M_\gamma)_{\gamma \in \Gamma}$ is the lower BZ data of a lower affine MV polytope of highest vertex $w$ and $(M^\gamma)_{\gamma \in \Gamma}$ is the upper BZ data of an affine MV polytope so $P$ is in fact an MV polytope. By the definition of $M^1$ and $M^{\overline{1}}$,  $\mu^0 = \mu_w$ thus $P$ is a lower affine MV polytope of highest vertex $w$. 
\end{proof}

Using the BZ data, we can now attempt to answer the following question:

\begin{question}\label{question:L}
Can we find a natural variety $X$ which is a positive space with a potential $\tau$ such that $X(\Z^\trop)_\geq^\tau$ will correspond to lower affine MV polytopes of highest vertex $w$?
\end{question}

Motivated by the finite case in \cite{MVBruhat}, we expect that $X = L^{w^{-1}}$. In the next section, we will find tropical functions $M_\gamma$ that will take a non-negative tropical point to the BZ data of a lower affine MV polytope. 

\section{Tropical geometry of reduced double Bruhat cells}\label{section:tropicalgeometry}

Following \cite[Section 3.4]{Kac-MoodyGroups}, we can define the subgroups $B_-$ and $N$ of the Kac-Moody group $\widehat{SL_2}$. We define the reduced double Bruhat cell $L^{w^{-1}} := N \cap B_- w^{-1} B_-$. 

We want to define the tropical points of $L^{w^{-1}}$ using the Lusztig coordinates. Let $x_i$ be the 1 parameter subgroup associated to $\alpha_i$. Let $\ui = (i_1, \dots, i_m)$ be such that $w = s_{i_1} \dots s_{i_m}$ is a reduced word, and define the function $x_{\ui} : (C^\times)^m \rightarrow L^{w^{-1}}$ by $x_{\ui}(a_1, \dots, a_m) = x_{i_m}(a_m) \dots x_{i_1}(a_1)$. Recall the following proposition of \cite{DoubleBruhatKacMoodyWilliams}:
\begin{proposition}[{\cite[Proposition 4.3]{DoubleBruhatKacMoodyWilliams}}]
The map $x_{\ui} : (\C^\times)^m \rightarrow L^{w^{-1}}$ is a birational isomorphism. 
\end{proposition}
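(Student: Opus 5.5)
The plan follows the standard argument for double Bruhat cells in the finite case (Fomin–Zelevinsky, Berenstein–Zelevinsky), adapted to the Kac–Moody setting via the Bruhat decomposition for $\widehat{SL_2}$ from \cite{Kac-MoodyGroups}. There are three things to prove, taken in order.

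\emph{Step 1: well-definedness.} We check that $x_{\ui}(a_1,\dots,a_m)$ lies in $L^{w^{-1}}$ for all $a_j \in \C^\times$. Each factor $x_{i_j}(a_j)$ lies in $N$, so the product does too. For the Bruhat condition we use the rank one identity
\[
x_i(a) = y_i(a^{-1})\, a^{\alpha_i^\vee}\, \overline{s_i}\, y_i(a^{-1}),
\]
which holds in the copy of $SL_2$ attached to $i$. Hence $x_i(a) \in B_- s_i B_-$. Reading the product right to left gives the reduced word $s_{i_m}\cdots s_{i_1}$ of $w^{-1}$. The relation $B_- s B_- \cdot B_- v B_- = B_- sv B_-$ holds whenever lengths add, and it is valid in the Kac–Moody Tits system. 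Therefore the product lies in $B_- w^{-1} B_-$.

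\emph{Step 2: injectivity on an open set.} We recover the $a_j$ from the image by induction on $m$. Given $x = x_{i_m}(a_m)\,x'$ with $x' \in L^{(s_{i_m} w)^{-1}}$-type data, the parameter $a_m$ should be a ratio of generalized minors, $a_m = \Delta(x)/\Delta'(x)$. These are matrix coefficients $\langle v_{\omega}, x\, v_{\omega'}\rangle$ in integrable highest weight modules of $\widehat{sl_2}$, with the weights being chamber weights from $\Gamma^w$. Since such matrix coefficients are regular functions on the group, we get a rational inverse defined on the open set where the relevant minors do not vanish. This is the formula of \cite{DoubleBruhatKacMoodyWilliams}.

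\emph{Step 3: dominance and dimension.} We show the image is dense in $L^{w^{-1}}$. The cell is irreducible of dimension $\ell(w)=m$, because $N\cap B_- w^{-1} B_-$ is isomorphic to an open piece of the Schubert cell. An injective morphism from $(\C^\times)^m$ into an irreducible variety of the same dimension is dominant. Combined with the rational inverse from Step 2, this makes $x_{\ui}$ birational.

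The main obstacle is Step 3 in the infinite-dimensional setting. $N$ is a pro-unipotent group, so "variety" and "dimension" need care. I would handle this by projecting to a finite-dimensional quotient $N/N_w'$, with $N_w' = N\cap wNw^{-1}$ or its analogue, on which the cell embeds. Alternatively, I would simply cite that the map is an open embedding onto the locus where the minors are nonzero. Given the paper's framework, the intended proof is to cite \cite[Proposition 4.3]{DoubleBruhatKacMoodyWilliams}.
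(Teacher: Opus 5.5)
The paper gives no argument of its own here: it imports the statement directly from \cite[Proposition 4.3]{DoubleBruhatKacMoodyWilliams}, which is where your proposal also lands. Your three steps are a sound sketch of the standard Fomin--Zelevinsky-type argument behind that citation: membership in $B_-w^{-1}B_-$ via the rank-one identity, a rational left inverse built from minors, and dominance by identifying $L^{w^{-1}}$ with an open subset of the finite-dimensional cell $B_-w^{-1}B_-/B_-$.

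Two small repairs are needed in Step 2. First, the residual factor $x'$ lies in $L^{(ws_{i_m})^{-1}}$, not $L^{(s_{i_m}w)^{-1}}$. Second, the unspecified ratio can be made explicit as $a_m=\Delta_{\omega_{i_m},w\omega_{i_m}}(x)/\Delta_{s_{i_m}\omega_{i_m},w\omega_{i_m}}(x)$. This follows from the adjointness $\langle F_i v,u\rangle=\langle v,E_i u\rangle$ together with the vanishing of $\Delta_{\omega_{i_m},w\omega_{i_m}}$ on $B_-(ws_{i_m})^{-1}B_-$.
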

As we are working with $w$ in the affine Weyl group of $\widehat{sl_2}$, there is only one choice of $\ui$ and hence $x_{\ui}$ forms a positive atlas on $L^{w^{-1}}$ as there are no transition functions. Thus the tropical points $L^{w^{-1}}(\Z^\trop)$ are well defined. 

\begin{corollary}\label{corollary:tropicalpoints}
$L^{w^{-1}}(\Z^\trop) \cong \Z^m$.
\end{corollary}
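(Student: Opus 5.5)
The plan is to read the corollary directly off the preceding proposition of \cite{DoubleBruhatKacMoodyWilliams} together with the general formalism of positive spaces. By definition, the tropical points of a positive variety $X$ with a positive atlas are computed in any chart of the atlas. Since the transition maps are subtraction-free rational maps, their tropicalizations identify the sets of tropical points of the tori $(\C^\times)^m$ across charts. For the algebraic torus $T=(\C^\times)^m$ with its standard positive structure, the tropical points are $T(\Z^\trop)=\mathrm{Hom}(\mathbb{G}_m,T)\cong\Z^m$, the cocharacter lattice. Equivalently, they are given by the valuations $(\mathrm{val}(a_1),\dots,\mathrm{val}(a_m))$ of points of $T(\C((t)))$.

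The steps are as follows. First, I fix $w\in W$ with $\ell(w)=m$. I note that in $\widehat{sl_2}$ the reduced word is unique: it alternates $s_0,s_1$ and is determined by its first letter, since $w\in\{w_m,\ow_m\}$. Hence the positive atlas on $L^{w^{-1}}$ consists of the single chart $x_{\ui}$. By the proposition, $x_{\ui}$ is a birational isomorphism $(\C^\times)^m\dashrightarrow L^{w^{-1}}$, so it is a positive chart. With no transition functions, the compatibility condition is vacuous. Next, I define $L^{w^{-1}}(\Z^\trop):=(\C^\times)^m(\Z^\trop)$ via this chart, and I identify the latter with $\Z^m$ through the exponents $(n_1,\dots,n_m)$, where $a_j=t^{n_j}\cdot(\text{unit})$. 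Concretely, a tropical point is the tuple of valuations of the Lusztig coordinates $a_1,\dots,a_m$.

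The only point needing care, and so the main obstacle, is well-definedness: the result should not depend on choices. Two choices could matter. One is the reduced word, and uniqueness of the reduced word in rank 2 affine type disposes of this. The other is a birational chart versus an open embedding. Tropical points of a positive space only see the positive birational structure, so a birational isomorphism suffices. If one insists on comparing with another positive chart, such as generalized minors, I would argue that any such chart is related to $x_{\ui}$ by a subtraction-free birational map. Such a map tropicalizes to a piecewise-linear bijection of $\Z^m$, so the identification with $\Z^m$ is canonical up to that bijection. I would therefore state the result as a bijection $L^{w^{-1}}(\Z^\trop)\to\Z^m$, $x_{\ui}(a_1,\dots,a_m)\mapsto(\mathrm{val}(a_1),\dots,\mathrm{val}(a_m))$.
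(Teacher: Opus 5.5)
Your proposal is correct and follows the paper's own reasoning: every element of the infinite dihedral group $W$ has a unique reduced word, so the Lusztig chart $x_{\ui}$ from the cited proposition is a single-chart positive atlas with no transition maps, and $L^{w^{-1}}(\Z^\trop)$ is therefore the set of tropical points of $(\C^\times)^m$, i.e.\ $\Z^m$. The paper treats this as immediate from that observation and gives no further argument, so your extra discussion of well-definedness is more careful than the paper requires but harmless.
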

We can write $\ell \in L^{w^{-1}}(\Z^\trop)$ as $\ell = (P_1, \dots, P_m)$, where $P_1, \dots, P_m$ are indexed according to a reduced word $\ui = (i_1, \dots, i_m)$ for $w$. Note that the indices on these $P_i$ are in the opposite order as the terms $a_i$ in the Lusztig coordinates $x_{\ui}(a_1, \dots, a_m)$.

In the next sections, we define tropical functions $M_\gamma$ and use a potential function to determine the non-negative tropical points of $L^{w^{-1}}$. 

\subsection{Tropical generalized minor functions}

Let $\lambda$ be a dominant weight of $\widehat{SL_2}$ and let $V(\lambda)$ be the highest weight representation of weight $\lambda$, with highest weight vector $v_\lambda$. Let $\langle \cdot, \cdot \rangle$ denote the Shapovalov form \cite{Shapovalovform} on $V(\lambda)$, where $\langle F_i v, u \rangle = \langle v, E_i u \rangle$ for every $i \in I, v,u\in V(\lambda)$. For $\delta, \gamma$ extremal weights of $V(\lambda)$, the \emph{generalized minors} are functions $\Delta_{\delta,\gamma}: \widehat{SL_2} \rightarrow \C$ such that 
\[
\Delta_{\delta,\gamma} (g) = \langle g \cdot v_\gamma, v_\delta \rangle.
\]
We denote $\Delta_{\omega_i, \gamma} = \Delta_{\gamma}$ when $\gamma$ is a chamber weight of level $i$.  

For $\gamma \in \Gamma$, the functions $\Delta_{\gamma}$ are defined with respect to extremal weight vectors in the fundamental representations of $\widehat{sl_2}$. These representations can be viewed as subrepresentations of the Fock space as in \cite{KacBook}. We will use the combinatorial approach of \cite{FockSpace} to label vectors of the Fock space as partitions. 

\begin{definition} (Definition 2.2 of \cite{FockSpace})  A \emph{charged partition} $(\lambda, i)$ is a pair consisting of a partition $\lambda$ and a integer $i$, where $i$ is called the \emph{charge}. 

The \textit{Fermionic Fock space} is the free span over all charged partitions. Define
\[
F^{(m)} = \text{span} \{ \text{charged partitions with charge }m\}.
\] 
\end{definition}

We can write the charged partition $(\lambda, k)$ as an upward-facing charged partition where each box of the partition is labelled by a 1 or a 0 by the following rules:
\begin{itemize}
 \item the box in the first row is labelled by the charge
 \item the boxes in each row is constant
 \item the labelling on the rows alternates.
\end{itemize}
 For example, the charged partition ((3,2,2,1),0) is given by:
\[
\begin{tikzpicture}
\draw (0,0) -- (-.5,.5) -- (0,1) -- (.5,.5) -- (0,0); \node at (0, .5) {0};

\draw (-.5,.5) -- (0,1) -- (-.5,1.5) -- (-1,1) -- (-.5,.5);\node at (-.5, 1) {1};
\draw (.5,.5) -- (0,1) -- (.5,1.5) -- (1,1) -- (.5,.5);\node at (.5, 1) {1};

\draw (-1,1) -- (-.5,1.5) -- (-1,2) -- (-1.5,1.5) -- (-1,1);\node at (-1, 1.5) {0};
\draw (0,1) -- (.5,1.5) -- (0,2) -- (-.5,1.5) -- (0,1);\node at (0, 1.5) {0};
\draw (1,1) -- (.5,1.5) -- (1,2) -- (1.5,1.5) -- (1,1);\node at (1, 1.5) {0};

\draw (0.5,1.5) -- (1,2) -- (.5,2.5) -- (0, 2) -- (0.5, 1.5);\node at (.5, 2) {1};
\draw (1.5,1.5) -- (2,2) -- (1.5,2.5) -- (1, 2) -- (1.5, 1.5);\node at (1.5, 2) {1};
\end{tikzpicture}
\]

Let $\omega_i = (\emptyset, i) \in F^{(i)}$. By \cite[Proposition 3.34]{FockSpace}, the fundamental weight representations $V(\omega_i) \subset F^{(i)}$ are generated by the action of the elements $F_1$ and $F_0$ of $\widehat{sl_2}$ on the highest weight vector of weight $\omega_i$. These actions are given by 
\begin{align*}
F_i (\lambda, k) &= \sum_{\substack{\mu\setminus \lambda \text{ is a } \\ i\text{-coloured box}}} (\mu, k), &
E_i(\lambda, k) = \sum_{\substack{\lambda\setminus \mu \text{ is a } \\ i \text{-coloured box}}} (\mu, k). 
\end{align*}
The chamber weights $\gamma= s_{i_1} \cdots s_{i_k} \omega_{i_k} \in \Gamma$ have extremal weight vectors $v_\gamma$ given by the charged partition $((k, k-1, \dots, 2, 1), i_k)$. Below, we denote extremal weight vectors $v_\gamma$ by their weight $\gamma$.

To calculate the value of $\Delta_\gamma$ in the Lusztig coordinates of $N$, we need to understand how $x \in N$ acts on $\gamma$. Using that $x_i(p) = \exp( p E_i)$ and the series form of the exponential, 
\begin{align*}
x_i(p) \cdot \gamma = \sum_{k=0}^\infty \frac{p^k}{k!} E_i^k \cdot \gamma = \sum_{\substack{\mu \text{ obtained by removing} \\ i\text{-coloured boxes from } \gamma}} \frac{p^{|\gamma \setminus \mu|}}{|\gamma \setminus \mu|!} \cdot \mu .
\end{align*}
Then for an arbitrary $x \in N$, 
\begin{align}\label{equation:induction2}
x \cdot x_i(p) \cdot \gamma = \sum_{\substack{ \mu \text{ obtained by removing } \\ i\text{-coloured boxes from } \gamma}} \frac{p^{|\gamma \setminus \mu|}}{|\gamma \setminus \mu|!}  x \cdot \mu .
\end{align}

Inspired by the definition of $\ui$-trails in \cite{Tensorproductmultiplicities} on weights, we define an $\ui$-trail for a partition $\gamma$ to $\delta$ as follows. 

\begin{definition}
Let $\ui = (i_1, \dots, i_m)$. An $\ui$-trail from $\gamma$ to $\delta$ is a sequence of partitions $\mu_i$ such that $\gamma =\mu_0 \supseteq \mu_1 \supseteq \cdots \supseteq \mu_{m-1} \supseteq \mu_{m} = \delta$ and $\mu_{k-1} \setminus \mu_{k}$ are $i_k$-coloured boxes. Define the length of an $\ui$-trail as the number of $k$ such that $\mu_{k} \neq \mu_{k+1}$. 
\end{definition}

\begin{lemma}\label{lemma:positiveminors}
Let $\gamma$ be a chamber weight of level $i$. In the Lusztig coordinates $x_{i_1}(p_1) \cdots x_{i_m}(p_m) \in N$ for a reduced word $\ui$ of $w^{-1}$, the generalized minor takes on the value
\[
\Delta_\gamma(x_{i_1}(p_1) \cdots x_{i_m}(p_m)) = \sum_{\substack{(i_m, \dots, i_1)\text{-trails}\\ \text{from }\gamma \text{ to } \omega_i}} \prod_{s=1}^m \frac{p_{m+1-s}^{|\mu_{s-1} \setminus \mu_s|}}{|\mu_{s-1} \setminus \mu_s|!}.
\] 
\end{lemma}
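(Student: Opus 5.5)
We prove the formula by induction on $m$, the length of the word, expanding the product one factor at a time with equation (\ref{equation:induction2}). The plan is to act first by the rightmost factor $x_{i_m}(p_m)$ on $v_\gamma$, then by $x_{i_{m-1}}(p_{m-1})$, and so on. After all $m$ factors we have an explicit linear combination of charged partitions, and we pair it with the highest weight vector $\omega_i = (\emptyset, i)$ using the Shapovalov form. Since distinct charged partitions form an orthonormal basis of the Fock space, the minor $\Delta_\gamma(x) = \langle x\cdot v_\gamma, v_{\omega_i}\rangle$ is the coefficient of the empty partition in $x \cdot \gamma$.

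For the inductive statement we prove something slightly more general than the lemma. For any charged partition $\mu$ (not only a chamber weight) and any word $(j_1,\dots,j_n)$,
\[
x_{j_1}(q_1)\cdots x_{j_n}(q_n)\cdot \mu = \sum_{\nu}\ \sum_{\substack{(j_n,\dots,j_1)\text{-trails}\\ \text{from }\mu\text{ to }\nu}} \prod_{s=1}^n \frac{q_{n+1-s}^{|\mu_{s-1}\setminus\mu_s|}}{|\mu_{s-1}\setminus\mu_s|!}\ \nu .
\]
The base case $n=0$ is trivial: the only trail is the constant one, giving coefficient $1$.

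For the inductive step, write $x = x_{j_1}(q_1)\cdots x_{j_{n-1}}(q_{n-1})$ and apply (\ref{equation:induction2}) with $i=j_n$, $p=q_n$. This expresses $x\cdot x_{j_n}(q_n)\cdot\mu$ as a sum over $\mu_1$, where $\mu_1$ is obtained from $\mu$ by removing $j_n$-coloured boxes, weighted by $q_n^{|\mu\setminus\mu_1|}/|\mu\setminus\mu_1|!$, of $x\cdot\mu_1$. Applying the induction hypothesis to $x\cdot\mu_1$ with the word $(j_1,\dots,j_{n-1})$, and concatenating the first step $\mu\supseteq\mu_1$ with a $(j_{n-1},\dots,j_1)$-trail from $\mu_1$, gives exactly the $(j_n,\dots,j_1)$-trails from $\mu$. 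The index shift $q_{n+1-s}$ matches, since step $s=1$ uses $q_n$. Specializing to $\mu=\gamma$, $\nu=\emptyset$ with charge $i$ (which is $\omega_i$), and to $(j_1,\dots,j_n)=(i_1,\dots,i_m)$, yields the lemma.

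The main obstacle is justifying (\ref{equation:induction2}) itself, namely the formula for $x_i(p)\cdot\mu$. Expanding $\exp(pE_i)$ requires that $E_i^k\mu/k!$ equal the sum of all partitions obtained by removing $k$ boxes of colour $i$, each with coefficient one. Removing boxes one at a time produces each such removal $k!$ times, once per ordering. We need the $k!$ orderings to be genuinely distinct and all valid. This holds because removable $i$-coloured boxes of a partition are never adjacent: adjacent boxes have different colours, so removing one $i$-box never creates or destroys another removable $i$-box. We also need the sum to be finite, which it is because $E_i$ lowers the size of the partition. We would verify this colour-separation property from the alternating labelling rule, then take (\ref{equation:induction2}) as established. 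Finally we note that $\Delta_\gamma$ depends only on the $\omega_i$-component of $x\cdot\gamma$, by orthonormality of the partition basis.
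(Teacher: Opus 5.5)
Your route is the paper's: induct on the number of factors, peel off the rightmost factor with (\ref{equation:induction2}), and concatenate the first removal step with a shorter trail. Stating the hypothesis for an arbitrary starting partition $\mu$ and endpoint $\nu$ is cleaner than the paper. The paper's inductive step silently applies its hypothesis to $\langle x\cdot\mu,\omega_i\rangle$ with $\mu$ not a chamber weight.

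The gap is in the step you yourself flag as the main obstacle. Your count there is correct. Removable $i$-boxes are never adjacent, and deleting an $i$-box neither creates nor destroys other removable $i$-boxes. Hence $E_i^k\mu=k!\sum_\nu\nu$, summed over all $\nu$ obtained by deleting $k$ removable $i$-boxes. But that gives
\[
x_i(p)\cdot\mu=\sum_{k\geq 0}\frac{p^k}{k!}E_i^k\mu=\sum_{\nu}p^{|\mu\setminus\nu|}\,\nu ,
\]
with no $1/|\mu\setminus\nu|!$. So your argument does not establish (\ref{equation:induction2}); it contradicts it, and you cannot then ``take (\ref{equation:induction2}) as established.'' Running your induction with the correct one-step formula proves
\[
\Delta_\gamma(x_{i_1}(p_1)\cdots x_{i_m}(p_m))=\sum_{\substack{(i_m,\dots,i_1)\text{-trails}\\ \text{from }\gamma\text{ to }\omega_i}}\ \prod_{s=1}^m p_{m+1-s}^{|\mu_{s-1}\setminus\mu_s|},
\]
which is the statement with every factorial deleted. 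The factorials genuinely make the stated formula false. Take $\gamma=((2,1),i)$ and the reduced word giving $x=x_i(p_1)x_{1-i}(p_2)$. Suppressing the charge, $x_{1-i}(p_2)\cdot\gamma=\gamma+p_2\big((2)+(1,1)\big)+p_2^2\,(1)$. Hence $\Delta_\gamma(x)=p_1p_2^2$, while the statement predicts $\tfrac{1}{2}p_1p_2^2$.

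The paper's own proof has the same defect, because it reads (\ref{equation:induction2}) directly off the exponential series without doing your count. Nothing later in the paper breaks. The lemma is used only for positivity and, through (\ref{eqn:directtrop}), for the tropical functions $M_\gamma$. These depend on the exponents of the monomials, not on their positive coefficients. The factorial constants appearing in the detropicalized edge equalities likewise disappear under tropicalization. So the right fix is to state and prove the formula without factorials. With that change your induction, including the general-$\mu$ hypothesis and the $q_{n+1-s}$ bookkeeping, goes through as written.
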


\begin{proof}
Suppose that $\gamma$ is a chamber weight of level $i$ so that $\Delta_\gamma(g) = \langle g \cdot \gamma, \omega_i \rangle$. We proceed by induction on $\ell(w)=m$. 

When $m = 1$, $\langle x_{i_1}(p_{i_1}) \cdot \gamma, \omega_i \rangle$ picks out the coefficient of $\omega_i$. Thus $\Delta_\gamma (x_{i_1}(p_{i_1})) = p_{i_1}^{|\gamma \setminus \omega_i|}$ if there exists an $i_1$-trail from $\gamma$ to $\omega_i$, otherwise it is zero. 

Suppose that for $x = x_{i_1}(p_1)\cdots x_{i_k}(p_k)$, 
\[
\Delta_\gamma (x) = \langle x \cdot \gamma, \omega_i \rangle = \sum_{\substack{(i_k, \dots, i_1)\text{-trails}\\ \text{from } \gamma \text{ to } \omega_i}} \prod_{s=1}^k \frac{p_{m+1-s}^{|\mu_{i_{s-1}} \setminus \mu_{i_s}|} }{|\mu_{i_{s-1}} \setminus \mu_{i_s}|!}. 
\]
Consider $x \cdot x_{i_{k+1}}(p_{k+1}) \cdot \gamma$. From (\ref{equation:induction2}), 
\begin{align*}
\langle x \cdot x_{i_{k+1}}(p_{k+1}) \cdot \gamma, \omega_i \rangle & = \sum_{\substack{i_{k+1}\text{-trails from} \\ \gamma \text{ to } \mu}} \frac{p_{m+1 - (k+1)}^{|\gamma\setminus\mu|}}{|\gamma\setminus\mu|!} \langle x \cdot \mu, \omega_i \rangle\\
& = \sum_{\substack{i_{k+1}\text{-trails from} \\ \gamma \text{ to } \mu}} \left(\sum_{\substack{(i_k, \dots, i_1)\text{-trails}\\ \text{from } \mu \text{ to } \omega_i}} \prod_{s=1}^k \frac{p_{m+1-s}^{|\mu_{s-1} \setminus \mu_s|}}{|\mu_{s-1} \setminus \mu_s|!} \right)  \frac{p_{m+1-(k+1)}^{|\gamma\setminus\mu|}}{|\gamma\setminus\mu|!}\\
& = \sum_{\substack{(i_{k+1}, i_k, \dots, i_1)\text{-trails} \\ \text{from }\gamma \text{ to } \omega_i}}\prod_{s=1}^{k+1} \frac{p_{m+1-s}^{|\mu_{s-1} \setminus \mu_s|}}{|\mu_{s-1} \setminus \mu_s|!}. \qedhere
\end{align*}
\end{proof}

As the generalized minors can be written as subtraction-free expressions in the Lusztig coordinates, they are positive functions and thus can be tropicalized. Define the function $M_\gamma = \Delta^\trop_\gamma$ as the tropical function on $L^{w^{-1}}(\Z^\trop)$. We will show that these tropical functions satisfy the same relations as in Lemma \ref{lemma:BZdata}, and thus the set $(M_\gamma(\ell))_{\gamma \in \Gamma}$ will be the BZ datum of an affine MV polytope of highest vertex $w$ for any non-negative tropical point $\ell \in L^{w^{-1}}$. 

In the reduced word $\ui$ for $w$, these generalized minor functions take on the value
\[
\Delta_\gamma(x_{\ui}(p_1, \dots, p_m)) =  \sum_{\substack{\ui\text{-trails}\\ \text{from }\gamma \text{ to } \omega_i}} \prod_{s=1}^m \frac{p_{s}^{|\mu_{s-1} \setminus \mu_s|}}{|\mu_{s-1} \setminus \mu_s|!}.
\]
By tropicalizing this relation, we give explicit formulae for these tropicalized functions. 
\begin{lemma}\label{lemma:Mequations}
Let $\ui = (i_1, \dots, i_m)$ be a reduced word for $w$ and let $(P_1, \dots, P_m) \in L^{w^{-1}}(\Z^\trop)$. For any $k \in \N$,
\begin{align*}
M_k (P_1, \dots, P_m) & =   \min_{\substack{ 1\leq j_1<j_2<\cdots<j_{k-1}\leq m \\ i_{j_\ell} =0,\text{ if } j_\ell=0 \mod 2\\ i_{j_\ell} =1,\text{ if } j_\ell=1 \mod 2}} \left\{ \sum_{s=1}^{k-1} (k-s) P_{j_s} \right\}, \\
M_{\bar{k}} (P_1, \dots, P_m) & = \min_{\substack{ 1\leq j_1<j_2<\cdots<j_{k-1} \leq m \\ i_{j_\ell} =0,\text{ if } j_\ell=1 \mod 2\\ i_{j_\ell} =1,\text{ if } j_\ell=0 \mod 2}} \left\{ \sum_{s=1}^{k-1} (k-s) P_{j_s} \right\}.
\end{align*}
\end{lemma}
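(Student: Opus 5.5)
The plan is to tropicalize the formula just displayed for $\Delta_\gamma(x_{\ui}(p_1,\dots,p_m))$, which by Lemma \ref{lemma:positiveminors} is a sum over $\ui$-trails from $\gamma$ to $\omega_i$. Tropicalization turns sums into minima and monomials $\prod_s p_s^{e_s}/e_s!$ into the linear forms $\sum_s e_s P_s$. The positive constants $1/e_s!$ disappear, and the expression is subtraction-free. Hence
\[
M_\gamma(P_1,\dots,P_m)=\min_{\text{$\ui$-trails from $\gamma$ to $\omega_i$}} \sum_{s=1}^m |\mu_{s-1}\setminus\mu_s|\, P_s .
\]
Everything therefore reduces to a combinatorial description of the $\ui$-trails from the staircase partition $v_\gamma=((k-1,k-2,\dots,1),i)$ down to the empty charged partition, together with the number of boxes removed at each step.

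First I would set up the combinatorics of the staircase. Take $\gamma=\gamma_k$ (the case $\ogamma_k$ is symmetric, swapping the colours $0\leftrightarrow 1$). The vector $v_\gamma$ is the staircase with $k-1$ rows. With the alternating colouring, the boxes of a staircase lying on a common anti-diagonal (equivalently, a common content) all share one colour. Moreover, the removable boxes of the staircase and of every intermediate shape in a trail all have a single colour at each stage. The key claim is that a sequence of removals of $i$-coloured boxes reaching $\emptyset$ must peel off the staircase ``layer by layer''. At each step the removable boxes of the current colour are exactly the outer corners, which all have the same colour, so each step $s$ either removes nothing or removes some number of boxes of the correct colour.

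The second step is to determine, for a general intermediate shape, which multisets of removals are possible, and to show the optimum is attained by a restricted family of trails. For the minimum, I would argue that it suffices to consider trails in which each nonempty step removes an entire outer layer. Concretely, the $s$-th nonempty step (counting from the first, $s=1,\dots,k-1$) removes the $k-s$ boxes of the longest remaining hook or diagonal. The colours must alternate, which produces the parity conditions $i_{j_\ell}\equiv j_\ell$ (or the opposite parity for $\ogamma_k$) after identifying the colour of the $\ell$-th layer with the charge pattern. The indices $j_1<\dots<j_{k-1}$ record the positions in $\ui$ at which the layers are removed, giving the term $\sum_s (k-s)P_{j_s}$.

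The main obstacle is justifying this reduction. Partial removals of a layer can be spread over several positions of $\ui$, so the tropical minimum a priori ranges over more terms. I would handle this by showing that any trail's weight $\sum_s e_sP_s$ is a convex-type combination that dominates, or equals, a minimum of full-layer trails. Alternatively, because $\ui$ alternates colours (rank 2 affine, reduced words alternate $0,1$), the colour sequence forces at most one nonempty removal per layer-colour block. In an alternating word, consecutive same-colour steps cannot occur, so a layer can only be removed at a single position: the next removable boxes have the other colour, and the remaining boxes of the current colour are not removable until that layer is cleared. I expect this alternation argument to collapse trails exactly to choices of $j_1<\dots<j_{k-1}$ with the stated parity, matching the box counts $k-s$ of successive staircase layers. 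I also need to check the indexing carefully, including the reversal of indices noted after Corollary \ref{corollary:tropicalpoints} and the convention that $p_s$ attaches to position $s$ of $\ui$.
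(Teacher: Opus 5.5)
Your tropicalization step is correct and matches the paper: $M_\gamma$ is the minimum over all $\ui$-trails of $\sum_s|\mu_{s-1}\setminus\mu_s|P_s$. The layer-by-layer trails give exactly the terms $\sum_s(k-s)P_{j_s}$; here each level of the staircase (a row in the paper's upward-facing picture) is removed at a single position. Since these are valid trails, the stated minimum is an upper bound for $M_\gamma$.

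\textbf{The gap is the reverse inequality.} Your alternation argument rests on a false claim. Intermediate shapes can have removable boxes of both colours, and a level need not be cleared at a single position of $\ui$. From the staircase $(3,2,1)$ you can:
\begin{itemize}
\item remove two of the three top boxes at one step;
\item remove the newly exposed box on the level below at the next step;
\item remove the last top box two steps later.
\end{itemize}
The example right after the lemma in the paper, with $\ui=(1,0,1,0,1,0,1)$ and top row labelled $P_1,P_1,P_1,P_3$, is exactly such a trail. So non-layer trails genuinely contribute to the tropical minimum. Since $(P_1,\dots,P_m)$ ranges over all of $\Z^m$, you must show each such weight is at least some layer-by-layer weight. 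Your remark about a ``convex-type combination'' names this goal but gives no mechanism for it.

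\textbf{The missing idea is to use paths in the labelled staircase.}
\begin{itemize}
\item Label each box by the position at which the trail removes it. Labels strictly decrease as you move up to either box resting on a given box, and every label on level $j$ has the colour of level $j$.
\item Take any path $\sigma$ choosing one box $\sigma(j)$ per level, with consecutive boxes adjacent. It defines a valid layer-by-layer trail: remove all of level $j$ at position $L(\sigma(j),j)$. Its weight is $\sum_j j\,P_{L(\sigma(j),j)}$.
\item It then suffices to show the total label sum is at least the minimum of these path weights.
\end{itemize}

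The paper proves the last step as follows. Take a minimal path $\chi$ and partition the staircase into disjoint upward tails $S_i$, the $i$-th starting at $\chi(i)$. By minimality of $\chi$, each $S_i$ weighs at least the part of $\chi$ on levels $i$ and above. Summing these parts of $\chi$ over $i$ gives $\sum_j j\,P_{L(\chi(j),j)}$.

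Your convexity idea can also be made rigorous. Consider the random path that, from position $p$ on level $j$, stays at $p$ with probability $(j+1-p)/(j+1)$ and otherwise moves to $p+1$. It is uniformly distributed on every level. Hence the total label sum equals the expected path weight, and so is at least the minimum path weight. Either argument, not the alternation of $\ui$, is what reduces the minimum to the stated formula.
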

Note that when $\gamma_k, \ogamma_k \not\in \Gamma^w$, then the set $\{(j_1, \dots, j_k): 1\leq j_1  < j_2 < \cdots <j_k \leq m\}$ is empty. As the minimum is taken over an empty set,  the corresponding function is equal to $\infty$. 

\begin{proof} The tropicalization of $\Delta_\gamma$ is
\begin{align}\label{eqn:directtrop}
M_\gamma (P_1, \dots, P_m) & = \min_{\substack{\ui\text{-trails} \\ \text{from } \gamma \text{ to } \omega_i}} \left\lbrace \sum_{s=1}^m |\mu_{s-1} \setminus \mu_s| P_s \right\rbrace .
\end{align}
All we need to show is the minimum is only dependent on length $k$ $\ui$-trails from $\gamma$ to $\omega_i$. 

Consider an $\ui$-trail $\gamma = \mu_0 \supseteq \mu_{1} \supseteq \cdots \supseteq \mu_{m-1}\supseteq \mu_m= \omega_i$ and the tropical value $\sum_{s=1}^m |\mu_{s-1} \setminus \mu_s| P_s$. We label the partition $\gamma$ by labelling the boxes in $\mu_{s-1}\setminus \mu_{s}$ by $P_s$. Denote the labelling on box $i$ in row $j$ by $L(i,j)$. Define a path $\sigma$ in $\gamma$ as a collection of boxes from each row of the partition such that all the boxes are connected in $\gamma$, i.e. $\sigma = (\sigma(1), \cdots, \sigma(k))$ such that $1\leq \sigma(i)\leq i$ is the choice of box in row $i$ and $\sigma(i+1)$ is either $\sigma(i)$ or $\sigma(i)+1$. Then $L(\sigma(i), i)$ is the labelling in box $\sigma(i)$ in row $i$. 

Let $\chi$ be a path in $\gamma$ such that 
\[
L(\chi(1),1) + \cdots + L(\chi(k),k) = \min_{\sigma \text{ path in } \gamma} \{ L(\sigma(1),1) + \cdots + L(\sigma(k),k)\},
\]
the path with the minimal sum of its labelling. We will show that $\sum_{j=1}^m j L(\chi(j),j) \leq \sum_{s=1}^m |\mu_{s-1}\setminus \mu_s| P_s$. 

First, we define paths $\sigma_i$ in $\gamma$. Let $\sigma_i$ be the path such that $\sigma_i(j) = \chi(j)$ for $1 \leq j \leq i$. For $i+1 \leq j \leq k$, if $\chi(i+1) = \chi(i)$, then we set $\sigma_i(j) = \sigma_i(i)+(j-i)$. Otherwise, $\chi(i+1) = \chi(i)+1$ and we set $\sigma_i(j) = \sigma_i(i)$.

From this definition, $\sum_{j=1}^i L(\chi(j), j) = \sum_{j=1}^i L(\sigma_i(j), j)$. Since $\chi$ is a path with the minimal labelling, this implies that 
\[
\sum_{j=i+1}^k L(\chi(j), j) \leq \sum_{j=i+1}^k L(\sigma_i(j),j)
\]
for every $1 \leq i < k$. It immediately follows that
\[
\sum_{j=1}^k (j-1)L(\chi(j), j) = \sum_{i=1}^{k-1} \sum_{j=i+1}^k L(\chi(j),j) \leq \sum_{i=1}^{k-1} \sum_{j=i+1}^k L(\sigma_i(j),j).
\]
Since $L(\chi(i),i) = L(\sigma_i(i),i)$ for $1 \leq i \leq k$, then  $\sum_{j=1}^{k} j L(\chi(j),j) \leq \sum_{i=1}^k \sum_{j=i}^k L(\sigma_i(j),j)$. All that is left is to show that $\sum_{s=1}^m |\mu_{s-1} \setminus \mu_s| P_s = \sum_{s=1}^k \sum_{j=s}^k L(\sigma_s(j), j)$.

\begin{claim}
 Let $S_i = \{ (\sigma_i(j),j) : i \leq j \leq k\}$. The sets $\{S_i\}_{i=1}^k$ partition $\gamma$. 
 \end{claim}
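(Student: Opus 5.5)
The plan is to prove the claim row by row. Since $\gamma$ is the staircase partition, its row $j$ consists of the boxes $(c,j)$ with $1 \leq c \leq j$. A box $(c,j)$ can only lie in $S_i$ for $i \leq j$, and each such $S_i$ contains exactly one box of row $j$, namely $(\sigma_i(j),j)$. So row $j$ receives exactly $j$ boxes from the family $\{S_i\}$, one from each of $S_1,\dots,S_j$. It therefore suffices to show that, for each fixed $j$, the $j$ integers $\sigma_1(j),\dots,\sigma_j(j)$ are pairwise distinct and lie in $\{1,\dots,j\}$. Then row $j$ is covered exactly once, and since every box of $\gamma$ lies in some row, the $S_i$ partition $\gamma$. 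As a sanity check, $\sum_{i=1}^k (k-i+1) = |\gamma|$.

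First I record the basic facts about $\chi$. By definition $\chi(1)=1$ and $\chi(i+1)-\chi(i)\in\{0,1\}$, so $\chi$ is nondecreasing, $\chi(i)\le i$, and $i\mapsto \chi(i)-i$ is nonincreasing. I call an index $i<j$ an \emph{up} index if $\chi(i+1)=\chi(i)+1$, and a \emph{stay} index otherwise. By construction of $\sigma_i$, the box of $S_i$ in row $j$ is as follows:
\begin{align*}
\sigma_j(j) &= \chi(j), \\
\sigma_i(j) &= \chi(i) \text{ if } i \text{ is an up index}, \\
\sigma_i(j) &= \chi(i)+(j-i) \text{ if } i \text{ is a stay index}.
\end{align*}

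The key step is a separation argument comparing each value with $\chi(j)$.
\begin{itemize}
\item For an up index $i<j$, monotonicity gives $\chi(i) < \chi(i+1) \le \chi(j)$, so $\sigma_i(j) < \chi(j)$.
\item For a stay index $i<j$, the increments of $\chi$ are at most $1$, so $\chi(j) \le \chi(i+1) + (j-i-1) = \chi(i)+j-i-1$. Hence $\sigma_i(j) > \chi(j)$.
\end{itemize}
Thus the up values lie strictly left of $\chi(j)$ and the stay values lie strictly right of it, so no value from one group coincides with a value from the other or with $\sigma_j(j)$.

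It remains to check distinctness within each group and the range.
\begin{itemize}
\item Within the up indices, if $i<i'$ are both up, then $\chi(i) < \chi(i+1) \le \chi(i')$, so the values differ.
\item Within the stay indices, the values are $(\chi(i)-i)+j$. The function $\chi(n)-n$ is nonincreasing and drops by exactly $1$ at every stay step. So if $i<i'$ are both stay, then $\chi(i')-i' \le \chi(i+1)-(i+1) < \chi(i)-i$, and again the values differ.
\item For the range, up values satisfy $\sigma_i(j)=\chi(i)\geq 1$, and stay values satisfy $\sigma_i(j)=\chi(i)-i+j \le j$ because $\chi(i)\le i$. Combined with the separation around $\chi(j)\in[1,j]$, all $j$ values lie in $\{1,\dots,j\}$.
\end{itemize}

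The only step needing care is the separation inequality for stay indices, which is where the definition of $\sigma_i$ is used essentially. Everything else is bookkeeping from the monotonicity of $\chi$ and of $\chi(n)-n$.
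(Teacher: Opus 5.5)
Your proposal is correct and is essentially the paper's argument: the same up/stay dichotomy, with the divergent paths lying strictly to the left or strictly to the right of $\chi$, combined with a counting argument. The differences are organizational: you count and check distinctness row by row (a pigeonhole in each row), where the paper uses the global count $\sum_{i=1}^k (k+1-i)=|\gamma|$ plus pairwise disjointness of $S_i$ and $S_j$; you also explicitly verify $1 \leq \sigma_i(j) \leq j$, i.e.\ $S_i \subseteq \gamma$, which the paper leaves implicit.
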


\begin{claimproof}
First, notice that the number of boxes in the set $\{ (\sigma_i(j),j): 1 \leq i\leq k, i\leq j\leq k\}$ is
\[
\sum_{i=1}^k \sum_{j=i}^k 1 = \sum_{i=1}^k (k+1-i) = \sum_{i=1}^k i 
\]
which is exactly the number of boxes in $\gamma$. 

Second, we show that no path $\sigma_i$ contains a box $\sigma_i(j)$ in $\chi$ for $i+1\leq j\leq k$.  If $\sigma_i(j) = \sigma_i(i)$, then $\chi(i+1) = \chi(i)+1$ and so $\chi(j) \geq \chi(i)+1 > \sigma_i(j)$. If $\sigma_i(j) = \sigma_i(i) + (j-i)$, then $\chi(i+1) = \chi(i)$ so $\chi(j) < \chi(i) + (j-i) = \sigma_i(j)$. Thus $\sigma_i$ doesn't intersect with $\chi$ above row $i$ so $S_i \cap \{ (\chi(j),j): 1 \leq j \leq k\} = (\chi(i),i)$. 

Third, we show for $i<j$, no two paths $\sigma_i$, $\sigma_j$ intersect above row $j$. Let $j \leq \ell \leq k$ and consider $\sigma_i(\ell)$. If $\sigma_i(\ell) = \sigma_i(i)$, then $\chi(i+1) = \chi(i)+1$ so $\chi(s) > \chi(i)$ for every $s >i$. Then
\[
\sigma_j(\ell) \geq \sigma_j(j) = \chi(j) > \chi(i) = \sigma_i(i) = \sigma_i(\ell)
\]
If $\sigma_i(\ell) = \sigma_i(i) + (\ell-i)$, then $\chi(i+1) = \chi(i)$ so $\chi(s) < \chi(i) + (s -i)$ for every $s>i$. Then
\[
\sigma_j(\ell) \leq \sigma_j(j) + (\ell -j)  = \chi(j) + (\ell-j) < \chi(i) + (j-i) + (\ell -j) = \sigma_i(i) + (\ell-i) = \sigma_i(\ell)
\]
Thus $\sigma_i(\ell) \neq \sigma_j(\ell)$ as desired.

The second and third points tell us that $S_i \cap S_j =\emptyset$ for every $i \neq j$ and hence the sets $S_i$ give a partition of $\gamma$.
\end{claimproof}

As the sets $S_i$ partition $\gamma$, then $\sum_{s=1}^{m} |\mu_{s-1} \setminus \mu_s | P_s = \sum_{i=1}^k \sum_{j=1}^k L(\sigma_s(j),j)$ as both are equal to the sum of all the labels on $\gamma$.

Consider a new labeling of $\gamma$ by $L'(i,j) = L(\chi(j),j)$, i.e. we label each box in row $j$ by $L(\chi(j),j)$. The corresponding $\ui$-trail of length $k$ for $\gamma$ will give the tropical value $\sum_{j=1}^m j L(\chi(j),j)$. As we showed above, this is smaller than the value $\sum_{s=1}^m |\mu_{s-1}\setminus \mu_s| P_s$ and hence for any $\ui$-trail, we can find a length $k$ $\ui$-trail which will give a smaller value. Thus the minimum in the tropical functions $M_\gamma$ depend only on length $k$ $\ui$-trails. 
\end{proof}

\begin{example}
Consider the reduced word $\ui= (1,0,1,0,1,0,1)$ and the following $\ui$-trail:
\begin{align*}
\begin{tikzpicture}
\draw (0,0) -- (-.25,.25) -- (0,.5) -- (.25,.25) -- (0,0); \node at (0, .25) {\small{0}};
\draw (-.25,.25) -- (0,.5) -- (-.25,.75) -- (-.5,.5) -- (-.25,.25);\node at (-.25, .5) {\small{1}};
\draw (.25,.25) -- (0,.5) -- (.25,.75) -- (.5,.5) -- (.25,.25);\node at (.25, .5) {\small{1}};
\draw (-.5,.5) -- (-.25,.75) -- (-.5,1) -- (-.75,.75) -- (-.5,.5);\node at (-.5, .75) {\small{0}};
\draw (0,.5) -- (-.25,.75) -- (0,1) -- (.25,.75) -- (0,.5); \node at (0, .75) {\small{0}};
\draw (.5,.5) -- (.25,.75) -- (.5,1) -- (.75,.75) -- (.5,.5);\node at (.5, .75) {\small{0}}; 
\draw (-.75,.75) -- (-0.5,1) -- (-.75,1.25) -- (-1,1) -- (-.75,.75);\node at (-.75, 1) {\small{1}};
\draw (-.25,.75) -- (0,1) -- (-.25,1.25) -- (-.5,1) -- (-.25,.75);\node at (-.25, 1) {\small{1}};
\draw (.25,.75) -- (0,1) -- (.25,1.25) -- (.5,1) -- (.25,.75);\node at (.25, 1) {\small{1}};
\draw (.75,.75) -- (0.5,1) -- (.75,1.25) -- (1,1) -- (.75,.75);\node at (.75, 1) {\small{1}};
\node at (0,-.5) {$\gamma$};
\end{tikzpicture}
&&
\begin{tikzpicture}
\draw (0,0) -- (-.25,.25) -- (0,.5) -- (.25,.25) -- (0,0); \node at (0, .25) {\small{0}};
\draw (-.25,.25) -- (0,.5) -- (-.25,.75) -- (-.5,.5) -- (-.25,.25);\node at (-.25, .5) {\small{1}};
\draw (.25,.25) -- (0,.5) -- (.25,.75) -- (.5,.5) -- (.25,.25);\node at (.25, .5) {\small{1}};
\draw (-.5,.5) -- (-.25,.75) -- (-.5,1) -- (-.75,.75) -- (-.5,.5);\node at (-.5, .75) {\small{0}};
\draw (0,.5) -- (-.25,.75) -- (0,1) -- (.25,.75) -- (0,.5); \node at (0, .75) {\small{0}};
\draw (.5,.5) -- (.25,.75) -- (.5,1) -- (.75,.75) -- (.5,.5);\node at (.5, .75) {\small{0}}; 
\draw (.75,.75) -- (0.5,1) -- (.75,1.25) -- (1,1) -- (.75,.75);\node at (.75, 1) {\small{1}};
\node at (0,-.5)  {$\mu_1$};
\end{tikzpicture}
&&
\begin{tikzpicture}
\draw (0,0) -- (-.25,.25) -- (0,.5) -- (.25,.25) -- (0,0); \node at (0, .25) {\small{0}};
\draw (-.25,.25) -- (0,.5) -- (-.25,.75) -- (-.5,.5) -- (-.25,.25);\node at (-.25, .5) {\small{1}};
\draw (.25,.25) -- (0,.5) -- (.25,.75) -- (.5,.5) -- (.25,.25);\node at (.25, .5) {\small{1}};
\draw (.5,.5) -- (.25,.75) -- (.5,1) -- (.75,.75) -- (.5,.5);\node at (.5, .75) {\small{0}}; 
\draw (.75,.75) -- (0.5,1) -- (.75,1.25) -- (1,1) -- (.75,.75);\node at (.75, 1) {\small{1}};
\node at (0,-.5) {$\mu_2$};
\end{tikzpicture}
&&
\begin{tikzpicture}
\draw (0,0) -- (-.25,.25) -- (0,.5) -- (.25,.25) -- (0,0); \node at (0, .25) {\small{0}};
\draw (.25,.25) -- (0,.5) -- (.25,.75) -- (.5,.5) -- (.25,.25);\node at (.25, .5) {\small{1}};
\draw (.5,.5) -- (.25,.75) -- (.5,1) -- (.75,.75) -- (.5,.5);\node at (.5, .75) {\small{0}}; 
\node at (0,-.5) {$\mu_3$};
\end{tikzpicture}
&&
\begin{tikzpicture}
\draw (0,0) -- (-.25,.25) -- (0,.5) -- (.25,.25) -- (0,0); \node at (0, .25) {\small{0}};
\draw (.25,.25) -- (0,.5) -- (.25,.75) -- (.5,.5) -- (.25,.25);\node at (.25, .5) {\small{1}};
\node at (0, -.5) {$\mu_4$};
\end{tikzpicture}
&&
\begin{tikzpicture}
\draw (0,0) -- (-.25,.25) -- (0,.5) -- (.25,.25) -- (0,0); \node at (0, .25) {\small{0}};
\node at (0, -0.5) {$\mu_5$};
\end{tikzpicture}
&&
\begin{tikzpicture}
\draw (0,0) -- (-.25,.25);
\draw (.25,.25) -- (0,0);
\node at (0, -.5) {$\mu_6=\mu_7 = \omega_0$};
\end{tikzpicture}
\end{align*}

We label the partition $\gamma$ as 
\[
\begin{tikzpicture}
\draw (0,0) -- (-.5,.5) -- (0,1) -- (.5,.5) -- (0,0); \node at (0, .5) {\small{$P_6$}};
\draw (-.5,.5) -- (0,1) -- (-.5,1.5) -- (-1,1) -- (-.5,.5);\node at (-.5, 1) {\small{$P_3$}};
\draw (.5,.5) -- (0,1) -- (.5,1.5) -- (1,1) -- (.5,.5);\node at (.5,1) {\small{$P_5$}};
\draw (-1,1) -- (-.5,1.5) -- (-1,2) -- (-1.5,1.5) -- (-1,1);\node at (-1,1.5) {\small{$P_2$}};
\draw (0,1) -- (-.5,1.5) -- (0,2) -- (.5,1.5) -- (0,1); \node at (0,1.5) {\small{$P_2$}};
\draw (1,1) -- (.5,1.5) -- (1,2) -- (1.5,1.5) -- (1,1);\node at (1,1.5) {\small{$P_4$}}; 
\draw (-1.5,1.5) -- (-1,2) -- (-1.5,2.5) -- (-2,2) -- (-1.5,1.5);\node at (-1.5, 2) {\small{$P_1$}};
\draw (-.5,1.5) -- (0,2) -- (-.5,2.5) -- (-1,2) -- (-.5,1.5);\node at (-.5, 2) {\small{$P_1$}};
\draw (.5,1.5) -- (0,2) -- (.5,2.5) -- (1,2) -- (.5,1.5);\node at (.5, 2) {\small{$P_1$}};
\draw (1.5,1.5) -- (1,2) -- (1.5,2.5) -- (2,2) -- (1.5,1.5);\node at (1.5, 2) {\small{$P_3$}};
\end{tikzpicture}
\]
If $\chi = \begin{tikzpicture} \draw (0,0) -- (-.25,.25) -- (0,.5) -- (.25,.25) -- (0,0);  \node at (0, .25) {\small{0}};
\draw (.25,.25) -- (0,.5) -- (.25,.75) -- (.5,.5) -- (.25,.25);\node at (.25, .5) {\small{1}};
\draw (.5,.5) -- (.25,.75) -- (.5,1) -- (.75,.75) -- (.5,.5);\node at (.5, .75) {\small{0}}; 
\draw (.25,.75) -- (0,1) -- (.25,1.25) -- (.5,1) -- (.25,.75);\node at (.25, 1) {\small{1}};
\end{tikzpicture}$, then the paths $\sigma_i$ in the proof of Lemma  \ref{lemma:Mequations} are:
\begin{align*}
\begin{tikzpicture}
\draw (0,0) -- (-.25,.25) -- (0,.5) -- (.25,.25) -- (0,0); \node at (0, .25) {\small{0}};
\draw (-.25,.25) -- (0,.5) -- (-.25,.75) -- (-.5,.5) -- (-.25,.25);\node at (-.25, .5) {\small{1}};
\draw (-.5,.5) -- (-.25,.75) -- (-.5,1) -- (-.75,.75) -- (-.5,.5);\node at (-.5, .75) {\small{0}};
\draw (-.75,.75) -- (-0.5,1) -- (-.75,1.25) -- (-1,1) -- (-.75,.75);\node at (-.75, 1) {\small{1}};
\node at (0,-.5) {$\sigma_1$};
\end{tikzpicture} & &
\begin{tikzpicture}
\draw (0,0) -- (-.25,.25) -- (0,.5) -- (.25,.25) -- (0,0); \node at (0, .25) {\small{0}};
\draw (.25,.25) -- (0,.5) -- (.25,.75) -- (.5,.5) -- (.25,.25);\node at (.25, .5) {\small{1}};
\draw (0,.5) -- (-.25,.75) -- (0,1) -- (.25,.75) -- (0,.5); \node at (0, .75) {\small{0}};
\draw (-.25,.75) -- (0,1) -- (-.25,1.25) -- (-.5,1) -- (-.25,.75);\node at (-.25, 1) {\small{1}};
\node at (0,-.5) {$\sigma_2$};
\end{tikzpicture} & &
\begin{tikzpicture}
\draw (0,0) -- (-.25,.25) -- (0,.5) -- (.25,.25) -- (0,0); \node at (0, .25) {\small{0}};
\draw (.25,.25) -- (0,.5) -- (.25,.75) -- (.5,.5) -- (.25,.25);\node at (.25, .5) {\small{1}};
\draw (.5,.5) -- (.25,.75) -- (.5,1) -- (.75,.75) -- (.5,.5);\node at (.5, .75) {\small{0}}; 
\draw (.75,.75) -- (0.5,1) -- (.75,1.25) -- (1,1) -- (.75,.75);\node at (.75, 1) {\small{1}};
\node at (0,-.5) {$\sigma_3$};
\end{tikzpicture} &&
\begin{tikzpicture}
\draw (0,0) -- (-.25,.25) -- (0,.5) -- (.25,.25) -- (0,0); \node at (0, .25) {\small{0}};
\draw (.25,.25) -- (0,.5) -- (.25,.75) -- (.5,.5) -- (.25,.25);\node at (.25, .5) {\small{1}};
\draw (.5,.5) -- (.25,.75) -- (.5,1) -- (.75,.75) -- (.5,.5);\node at (.5, .75) {\small{0}}; 
\draw (.25,.75) -- (0,1) -- (.25,1.25) -- (.5,1) -- (.25,.75);\node at (.25, 1) {\small{1}};
\node at (0,-.5) {$\sigma_4 = \chi$};
\end{tikzpicture}
\end{align*}
Then the labelling of $\gamma$ by
\[
\begin{tikzpicture}
\draw (0,0) -- (-.5,.5) -- (0,1) -- (.5,.5) -- (0,0); \node at (0, .5) {\small{$P_6$}};
\draw (-.5,.5) -- (0,1) -- (-.5,1.5) -- (-1,1) -- (-.5,.5);\node at (-.5, 1) {\small{$P_5$}};
\draw (.5,.5) -- (0,1) -- (.5,1.5) -- (1,1) -- (.5,.5);\node at (.5,1) {\small{$P_5$}};
\draw (-1,1) -- (-.5,1.5) -- (-1,2) -- (-1.5,1.5) -- (-1,1);\node at (-1,1.5) {\small{$P_4$}};
\draw (0,1) -- (-.5,1.5) -- (0,2) -- (.5,1.5) -- (0,1); \node at (0,1.5) {\small{$P_4$}};
\draw (1,1) -- (.5,1.5) -- (1,2) -- (1.5,1.5) -- (1,1);\node at (1,1.5) {\small{$P_4$}}; 
\draw (-1.5,1.5) -- (-1,2) -- (-1.5,2.5) -- (-2,2) -- (-1.5,1.5);\node at (-1.5, 2) {\small{$P_1$}};
\draw (-.5,1.5) -- (0,2) -- (-.5,2.5) -- (-1,2) -- (-.5,1.5);\node at (-.5, 2) {\small{$P_1$}};
\draw (.5,1.5) -- (0,2) -- (.5,2.5) -- (1,2) -- (.5,1.5);\node at (.5, 2) {\small{$P_1$}};
\draw (1.5,1.5) -- (1,2) -- (1.5,2.5) -- (2,2) -- (1.5,1.5);\node at (1.5, 2) {\small{$P_1$}};
\end{tikzpicture}
\]
with the $\ui$-trail 
\begin{align*}
\begin{tikzpicture}
\draw (0,0) -- (-.25,.25) -- (0,.5) -- (.25,.25) -- (0,0); \node at (0, .25) {\small{0}};
\draw (-.25,.25) -- (0,.5) -- (-.25,.75) -- (-.5,.5) -- (-.25,.25);\node at (-.25, .5) {\small{1}};
\draw (.25,.25) -- (0,.5) -- (.25,.75) -- (.5,.5) -- (.25,.25);\node at (.25, .5) {\small{1}};
\draw (-.5,.5) -- (-.25,.75) -- (-.5,1) -- (-.75,.75) -- (-.5,.5);\node at (-.5, .75) {\small{0}};
\draw (0,.5) -- (-.25,.75) -- (0,1) -- (.25,.75) -- (0,.5); \node at (0, .75) {\small{0}};
\draw (.5,.5) -- (.25,.75) -- (.5,1) -- (.75,.75) -- (.5,.5);\node at (.5, .75) {\small{0}}; 
\draw (-.75,.75) -- (-0.5,1) -- (-.75,1.25) -- (-1,1) -- (-.75,.75);\node at (-.75, 1) {\small{1}};
\draw (-.25,.75) -- (0,1) -- (-.25,1.25) -- (-.5,1) -- (-.25,.75);\node at (-.25, 1) {\small{1}};
\draw (.25,.75) -- (0,1) -- (.25,1.25) -- (.5,1) -- (.25,.75);\node at (.25, 1) {\small{1}};
\draw (.75,.75) -- (0.5,1) -- (.75,1.25) -- (1,1) -- (.75,.75);\node at (.75, 1) {\small{1}};
\node at (0,-.5) {$\gamma$};
\end{tikzpicture}
&&
\begin{tikzpicture}
\draw (0,0) -- (-.25,.25) -- (0,.5) -- (.25,.25) -- (0,0); \node at (0, .25) {\small{0}};
\draw (-.25,.25) -- (0,.5) -- (-.25,.75) -- (-.5,.5) -- (-.25,.25);\node at (-.25, .5) {\small{1}};
\draw (.25,.25) -- (0,.5) -- (.25,.75) -- (.5,.5) -- (.25,.25);\node at (.25, .5) {\small{1}};
\draw (-.5,.5) -- (-.25,.75) -- (-.5,1) -- (-.75,.75) -- (-.5,.5);\node at (-.5, .75) {\small{0}};
\draw (0,.5) -- (-.25,.75) -- (0,1) -- (.25,.75) -- (0,.5); \node at (0, .75) {\small{0}};
\draw (.5,.5) -- (.25,.75) -- (.5,1) -- (.75,.75) -- (.5,.5);\node at (.5, .75) {\small{0}}; 
\node at (0,-.5) {$\mu_1 = \mu_2 = \mu_3$};
\end{tikzpicture}
&&
\begin{tikzpicture}
\draw (0,0) -- (-.25,.25) -- (0,.5) -- (.25,.25) -- (0,0); \node at (0, .25) {\small{0}};
\draw (-.25,.25) -- (0,.5) -- (-.25,.75) -- (-.5,.5) -- (-.25,.25);\node at (-.25, .5) {\small{1}};
\draw (.25,.25) -- (0,.5) -- (.25,.75) -- (.5,.5) -- (.25,.25);\node at (.25, .5) {\small{1}};
\node at (0,-.5){$\mu_4$};
\end{tikzpicture}
&&
\begin{tikzpicture}
\draw (0,0) -- (-.25,.25) -- (0,.5) -- (.25,.25) -- (0,0); \node at (0, .25) {\small{0}};
\node at (0,-.5){$\mu_5$};
\end{tikzpicture}
&& 
\begin{tikzpicture}
\draw (0,0) -- (-.25,.25);
\draw (.25,.25) -- (0,0);
\node at (0, -.5) {$\mu_6 = \mu_7 = \omega_0$};
\end{tikzpicture}
\end{align*}
will give a smaller tropical value. 
\end{example}

As a result of this lemma, the values of the $M_\gamma$ functions depends only on subsequences of $(P_1, \cdots, P_m)$ with indices $1 \leq j_1 < j_2< \cdots < j_{k} \leq m$ which alternate between even and odd numbers, beginning with the correct parity; when $\gamma = \ogamma_k$, $j_1$ is even and when $\gamma = \gamma_k$, $j_1$ is odd. As shorthand, we will call such a sequence of $j$'s a \emph{valid sequence for} $M_\gamma$. If $(j_1, \dots, j_{k-1})$ is a sequence of indices such that $M_\gamma = \sum_{s=1}^{k-1} (k-s)P_{j_s}$, then we will call it a \emph{sequence which minimizes} $M_\gamma$. 

Using these equations for $M_\gamma$, we can show that the lower diagonals \ref{condition:diagonal} in Definition \ref{definition:affinepolytope} are satisfied.

\begin{lemma}\label{lemma:LWdiagonals}
On $L^{w^{-1}}(\Z^\trop)$, the functions $M_k, M_{\overline{k}}$ satisfy
\begin{align}\label{eqn:LWdiagonals}
  k M_k + k M_{\overline{k}} = \min \lbrace k M_{k-1} + M_k + (k-1) M_{\overline{k+1}}, k M_{\overline{k-1}} + M_{\overline{k}} + (k-1) M_{k+1} \}
\end{align}
for all $k \geq 2$
\end{lemma}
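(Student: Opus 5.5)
We would prove the identity by induction on $m=\ell(w)$, peeling off the first letter of the reduced word and using the explicit formulas of Lemma \ref{lemma:Mequations}. Write $\ui=(i_1,\dots,i_m)$ and let $\ui'=(i_2,\dots,i_m)$, a reduced word for a Weyl element of length $m-1$ whose first letter has the opposite colour. For $\ell'=(P_2,\dots,P_m)\in L^{(w')^{-1}}(\Z^\trop)$, reindexing by $j\mapsto j-1$ flips the parity of every index. Hence a valid sequence for $M_k$ on $\ui$ with $j_1\geq 2$ is exactly a valid sequence for $M_{\bar k}$ on $\ui'$. Also, a valid sequence with $j_1=1$ contributes $(k-1)P_1$ plus a valid sequence for $M_{\overline{k-1}}$ on $\ui'$. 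This gives the recursion
\begin{align*}
M_k(\ell)=\min\{M'_{\bar k},\,(k-1)P_1+M'_{\overline{k-1}}\}, \qquad M_{\bar k}(\ell)=\min\{M'_{k},\,M'_{\bar k}\}\ \text{(or the analogous expression)},
\end{align*}
where primes denote the functions for $\ui'$ evaluated at $\ell'$. The exact form depends on whether $i_1=1$. By Lemma \ref{lemma:Mequations}, $j_1$ odd forces $i_{j_1}=1$, so the term $(k-1)P_1$ appears only in $M_k$ when $i_1=1$, and only in $M_{\bar k}$ when $i_1=0$. By symmetry we would treat only $i_1=1$. We would also fix the conventions: $M_1=M_{\bar 1}=0$, and a minimum over an empty set is $\infty$. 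The base cases $m=0,1$ can then be checked directly.

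For the inductive step, the identity (\ref{eqn:LWdiagonals}) becomes a statement in the min-plus semiring. Substitute the recursion for each of the six functions $M_{k-1},M_k,M_{k+1},M_{\overline{k-1}},M_{\bar k},M_{\overline{k+1}}$ appearing there. The left-hand side becomes a minimum of four terms. Each of these is either a combination of primed functions with no $P_1$, or has $P_1$ with coefficient $k(k-1)$. The right-hand side is a minimum of tropical monomials in $P_1$ with coefficients $0$, $k-1$, $k(k-2)+(k-1)$, $k(k-1)$, and so on. We would group both sides by the power of $P_1$. The $P_1$-free parts should agree because of the inductive hypothesis for $\ui'$ at index $k$ with the roles of barred and unbarred swapped. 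The top-degree parts should agree because of the inductive hypothesis at index $k-1$, using $(k-1)M'_{\overline{k-1}}$-type terms shifted by $k(k-1)P_1$.

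The main obstacle is the middle terms, which have intermediate powers of $P_1$ such as $(k-1)P_1$ or $(k-1)^2P_1$ and appear only on the right. For these we must show they never undercut the minimum, that is, each is $\geq$ one of the two extreme terms. We would attack this in two steps. First, we would establish ``concavity'' inequalities $M'_{k-1}+M'_{k+1}\leq 2M'_k$ and mixed versions such as $M'_{\overline{k}}\leq M'_{k}+\dots$ directly from Lemma \ref{lemma:Mequations}, by splicing minimizing sequences. A valid sequence for $M_{k+1}$ truncated, together with one for $M_{k-1}$ extended, gives two valid sequences for $M_k$, and the weights $\sum(k-s)P_{j_s}$ combine linearly. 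Second, we would use these to show that each middle term is a convex combination that dominates the minimum of the extreme terms. If this becomes messy, the fallback is a direct combinatorial argument. Take minimizing sequences for the right-hand side terms, and form the two alternating sequences for $M_k$ (odd start) and $M_{\bar k}$ (even start). At the first index where they cross, swap their tails, in the manner of a Lindstr\"om--Gessel--Viennot exchange. This proves $\leq$ for both terms. Equality for one term is then obtained by running the swap in reverse, according to which of the minimizers for $M_k$ and $M_{\bar k}$ has the smaller first index.
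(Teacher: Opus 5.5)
Your induction on $m$ is a workable frame, but the step you yourself flag as decisive rests on inequalities that are false. Lemma \ref{lemma:LWdiagonals} is claimed on all of $L^{w^{-1}}(\Z^\trop)\cong\Z^m$. The concavity inequalities $M_{k-1}+M_{k+1}\le 2M_k$ are, up to relabelling, exactly the conditions $\tau^\trop\ge 0$ that cut out the non-negative part. Already for $\ui=(1,0)$, Lemma \ref{lemma:Mequations} gives $M_1=0$, $M_2=P_1$, $M_3=2P_1+P_2$, so $M_1+M_3-2M_2=P_2$, which is positive for $P_2>0$. No splicing of minimizing sequences can prove such inequalities. Moreover, the barred ones are deduced from the diagonal relations in Lemma \ref{lemma:LWpositivity}, so invoking them would be circular.

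Your recursion also mixes conventions, and the formula for $M_{\overline{k}}$ is wrong. When $i_1=1$, every valid sequence for $M_{\overline{k}}$ starts at an index $\ge 2$, so $M_{\overline{k}}$ is a single primed function, not a minimum of two. Label the functions of $\ui'$ by the colour of the top row, and write $A_j=M'_j$, $B_j=M'_{\overline{j}}$, $x=P_1$. Then $M_j=\min\{A_j,(j-1)x+B_{j-1}\}$ for $j\ge 2$, and $M_{\overline{j}}=B_j$.

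With this recursion your scheme does close, but with a different ingredient. The left side is $\min\{L_0,L_1\}$, where $L_0=kA_k+kB_k$ and $L_1=k(k-1)x+kB_{k-1}+kB_k$.
On the right:
\begin{itemize}
\item The $x$-free terms have minimum $L_0$ by the hypothesis at $k$.
\item The top term $kB_{k-1}+B_k+(k-1)(kx+B_k)$ is identically $L_1$, so no hypothesis is needed there.
\item The middle terms have $x$-coefficients $k-1$, $k(k-2)$ and $k^2-k-1$; for $k=2$ only the first occurs.
\end{itemize}
Each middle term dominates the convex combination of $L_0,L_1$ with the same $x$-coefficient, and hence dominates $\min\{L_0,L_1\}$, exactly when the corresponding line below holds (the first line for coefficient $k-1$, the second for the other two):
\begin{align*}
(k-1)A_k+kB_k&\le kA_{k-1}+(k-1)B_{k+1},\\
A_k+k(k-1)B_k+k(k-2)B_{k-1}&\le k(k-1)B_{k-2}+(k-1)^2B_{k+1}.
\end{align*}
The first line is the one-sided half of the hypothesis at $k$ (first term). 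The second is $k$ times the one-sided half at $k-1$ (second term), plus $k-1$ times the first line. So the missing idea is to recycle the inequalities contained in the inductive hypothesis itself, not concavity.

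The $\infty$ cases must be checked separately, since infinite terms cannot be cancelled. This is harmless, because $A_k=\infty$ or $B_k=\infty$ forces $B_{k+1}=\infty$. Repaired this way, you would have a proof genuinely different from the paper's, which is not inductive.

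Your fallback is only the first step of the paper's argument. A single tail swap cannot match the multiplicities $k,1,k-1$ on the right against $k,k$ on the left. The paper absorbs the leftover weight by arranging it into $\ell$ fillings of $\gamma_k$ that are not constant along rows, where $\ell$ is the first crossing index. Each filling is bounded below by $M_k$ through the full minimum over trails (\ref{eqn:directtrop}). Equality is then obtained by comparing the first indices of minimizers of $M_k$ and $M_{\overline{k}}$, not by reversing the swap.
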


\begin{proof}
We begin with the edge cases: when there does not exist a valid sequence for one of the terms. When $k \leq m$, there always exists at least one valid sequence for both $M_k$ and $M_{\overline{k}}$. When $k=m+1$, then there is only one valid sequence of length $m$, which is $(1, \dots, m)$ so that $M_{m+1}$ is non-infinite but $M_{\overline{m+1}} = \infty$. When $k \geq m+2$, then both $M_k$ and $M_{\overline{k}}$ are infinite. Thus for any $k \geq m+1$, both sides of (\ref{eqn:LWdiagonals}) are infinite. The only $k$ left to check is $k=m$. In this case, 
\begin{align*}
m( M_{\overline{m}} + M_{m} ) & = m \left( \sum_{i=2}^{m} (m+1-i) P_i + \sum_{i=1}^{m-1} (m-i) P_i \right) \\
& = m\left( (m-1) P_1 + \sum_{i=2}^{m-1} (2m+1-2i) P_i + P_m \right)
\end{align*}
while
\begin{align*}
mM_{m-1} + M_m + (m-1) M_{\overline{m+1}} &= \infty\\
m M_{\overline{m-1}} + M_{\overline{m}} + (m-1) M_{m+1} & = m \sum_{i=2}^{m-1} (m-i) P_i + \sum_{i=2}^{m} (m+1-i) P_i + (m-1)\sum_{i=1}^m (m+1-i) P_i \\
& = m(m-1)P_1  \sum_{i=2}^{m-1} m(2m-2i+1) P_i  +  m P_m
\end{align*}
so that (\ref{eqn:LWdiagonals}) indeed holds for the edge cases. 

For the rest of the proof, we assume that every function has a valid sequence. We begin by proving that $kM_k + k M_{\overline{k}}$ is a lower bound for both terms. We only show the bound for one term; by symmetry, the bound on the other term is an identical argument.

The idea of the proof is as follows. Using the values in $kM_{k-1} + M_k + (k-1) M_{\overline{k+1}}$, we build $k$ valid fillings of $\gamma_k$ and $\gamma_{\overline{k}}$. By using the direct tropicalization (\ref{eqn:directtrop}), we show that these fillings will be bounded below by the values of $M_k$ and $M_{\overline{k}}$. 

Let $(a_1, \dots, a_k)$ be a sequence minimizing $M_{\overline{k+1}}$ and let $(b_1, \dots, b_{k-2})$ be a sequence minimizing $M_{k-1}$. Note that the pairs $a_i$ and $b_i$ have opposite parity while $b_i$ and $a_{i+1}$ have the same parity, for $1 \leq i \leq k-2$. 

Suppose there exists an $1 \leq \ell \leq k-2$ such that $a_\ell < b_\ell$ but $b_i < a_i$ for all $1 \leq i < \ell$. If no such $\ell$ exists, set $\ell = k-1$. Consider the following rearrangement of terms.
\begin{align*}
k M_{k-1} + (k-1) M_{\overline{k+1}} & = k \sum_{i=1}^{k-2} (k-1-i) P_{b_i} + (k-1) \sum_{i=1}^{k} (k+1-i) P_{a_i} \\
& = k  \sum_{i=1}^{\ell-1} (k-1-i) P_{b_i} + k  \sum_{i=\ell+1}^{k-1} (k-i) P_{b_{i-1}} + (k-1) \sum_{i=1}^{\ell}(k+1-i) P_{a_i} \\
& \quad + (k-1)\sum_{i=\ell}^{k-1} (k-i) P_{a_{i+1}} \\
& = k \left( \sum_{i=1}^{\ell}(k-i)P_{a_i} + \sum_{i=\ell+1}^{k-1} (k-i) P_{b_{i-1}} \right) +  \sum_{i=1}^{\ell-1} \left( k(k-i-1) P_{b_i} + i P_{a_{i+1}} \right)\\
& \quad + (k-1)\sum_{i=\ell}^{k-1} (k-i) P_{a_{i+1}} 
\end{align*}
As $a_1 < a_2 < \dots < a_\ell < b_{\ell} < \dots < b_{k-2}$, then $(a_1, \dots, a_\ell, b_\ell, \dots, b_{k-2})$ is a valid sequence for $M_{\overline{k}}$ so that we obtain the following lower bound:
\[
k M_{k-1} + M_k + (k-1) M_{\overline{k+1}} \geq k M_{\overline{k}} + M_k +  \sum_{i=1}^{\ell-1} \left( k(k-i-1) P_{b_i} + i P_{a_{i+1}} \right) + (k-1)\sum_{i=\ell}^{k-1} (k-i) P_{a_{i+1}} .
\]
By the choice of $\ell$, we have $b_{\ell-1} < a_{\ell-1} < a_{\ell} < a_{\ell+1}$, so that $b_1 < b_2< \dots < b_{\ell-1} < a_{\ell+1} < \dots < a_k$ is a valid sequence for $M_k$. 
\begin{align*}
k M_{k-1} + M_k + (k-1) M_{\overline{k+1}} & \geq k M_{\overline{k}} + M_k + (k-1-\ell) \left( \sum_{i=1}^{\ell-1} (k-i) P_{b_i} + \sum_{i=\ell}^{k-1}(k-i) P_{a_{i+1}} \right) \\
& \quad + \sum_{i=1}^{\ell-1}\left( ((k - i)\ell - i) P_{b_i} + i P_{a_{i+1}} \right)+ \ell \sum_{i=\ell}^{k-1} (k-i)P_{a_{i+1}} \\
& \geq k M_{\overline{k}} + (k-\ell) M_k + \sum_{i=1}^{\ell-1}\left( ((k - i)\ell - i) P_{b_i} + i P_{a_{i+1}} \right)+ \ell \sum_{i=\ell}^{k-1} (k-i)P_{a_{i+1}} 
\stepcounter{equation}\tag{\theequation}\label{eqn:prooflower}
\end{align*}
The leftover terms can be arranged into $\ell$ valid fillings of $\gamma_k$, where valid means that the filling is the result of an $\ui\text{-trail}$ from $\gamma_k$ to the appropriate $\omega_i$. 

For the $p$-th filling, let $L(i,j)$ be the index of the labelling of box $i$ in row $j$; for example, $L(i,j) = a$ means we label box $i$ in row $j$ with $P_a$. Recall that we count rows from the bottom so that row $j$ has exactly $j$ boxes. Below, we will call a filling of row $j$ valid if
\begin{enumerate}
 \item $L(i,j)<L(i,j-1)$ for $1 \leq i \leq j-1$ and $L(i, j)<L(i-1, j-1)$ for $2 \leq i \leq j$,
 \item $(-1)^{L(i,j)} = (-1)^{b_{k-j}}$ for $2 \leq i \leq j \leq k-1$ and $(-1)^{L(1,1)} = -(-1)^{b_{k-2}}$. 
\end{enumerate}
The first condition ensures that the labelling of the boxes increases along any path in $\gamma_k$, while the second condition ensures that each label has the correct parity with respect to the charge of $\gamma_k$. If each row of a filling is valid, it follows that the filling is valid.  

In row $1 \leq j \leq k-\ell$, let $L(i,j) = a_{k+1-j}$ for $1 \leq i \leq j$. These row fillings are clearly valid as $(-1)^{a_{k+1-j}} = (-1)^{b_{k-j}}$. 

In row $k-\ell+1 \leq j \leq k-\ell+p-1$, let $L(i,j) = b_{k-j}$ for $i \leq j-1$ and $L(j,j) = a_{k-j+1}$; clearly, these values have the correct parity. For $j= k-\ell+1$, by the choice of $\ell$, $b_{\ell-1} < a_{\ell+1}$ so that this row is valid. For $j > k -\ell+1$, the only potential problem is $L(j-1, j) = b_{k-j}$ and $L(j-1, j-1) = a_{k-j+2}$. But $k-j < k -(k-\ell+1) = \ell-1$, so that $b_{k-j} < a_{k-j+2}$ by the choice of $\ell$, and this filling is valid. 

Finally, in rows $k-\ell+p \leq j \leq k-1$, set $L(i,j) = b_{k-j}$ for $ 1 \leq j$. Again, the only problem is in row $k - \ell+p$ at boxes $j-1$ and $j$. But $L(j,j) = L(j-1, j) = b_{\ell-p} <a_{\ell-p+2} = L(j-1,j-1)$. Thus each of the $p$ fillings are valid. 

For a visualization of the $p$-th filling, see Figure \ref{figure:gammakfilling}. The blue lines indicate boxes labelled with $a_i$'s. Note that the bottom of the partition is labelled with $a_i$'s and the top is labelled with $b_i$'s, with the exception of one arm whose length is dependent on $p$.

\begin{figure}
\[
 \begin{tikzpicture}[scale =1.2]

\draw (-5.75,5.75) -- (-5.25,6.25) -- (-5.75,6.75) -- (-6.25,6.25) --  (-5.75,5.75); \node at (-5.75,6.25) {\tiny{$b_1$}};
\node at (-4.5, 6.25) {{$\cdots$}};
\node at (2.75, 6.25) {{$\cdots$}};
\draw (4.25,6.25) -- (3.75,5.75)  -- (3.25,6.25) -- (3.75,6.75) --  (4.25,6.25); \node at (3.75,6.25) {\tiny{$b_1$}};
\draw (5.25,6.25) -- (4.75,5.75)  -- (4.25,6.25) -- (4.75,6.75) --  (5.25,6.25); \node at (4.75,6.25) {\tiny{$b_1$}};
\draw (6.25,6.25) -- (5.75,5.75)  -- (5.25,6.25) -- (5.75,6.75) --  (6.25,6.25); \node at (5.75,6.25) {\tiny{$b_1$}};
\node at (7, 6.25) {\tiny{row $k-1$}};

\node at (-5.25, 5.75) {{$\ddots$}};
\node at (5.25, 5.75) {{$\iddots$}};
 
\draw (-4.5,4.5) -- (-4,5) -- (-4.5,5.5) -- (-5,5) --  (-4.5,4.5); \node at (-4.5,5) {\tiny{$b_{\ell-p}$}};
\node at (-3.25, 5) {{$\cdots$}};
\node at (1.5, 5) {{$\cdots$}};
\draw (3,5) -- (2.5,4.5)  -- (2,5) -- (2.5,5.5) --  (3,5); \node at (2.5,5) {\tiny{$b_{\ell-p}$}};
\draw (4,5) -- (3.5,4.5)  -- (3,5) -- (3.5,5.5) --  (4,5); \node at (3.5,5) {\tiny{$b_{\ell-p}$}};
\draw  (4.5,4.5) -- (4,5) -- (4.5,5.5) -- (5,5) --  (4.5,4.5); \node at (4.5,5) {\tiny{$b_{\ell-p}$}};
\node at (5.9,5) {\tiny{row $k-\ell+p$}};
 
\draw (-4,4) -- (-3.5,4.5) -- (-4,5) -- (-4.5,4.5) --  (-4,4); \node at (-4,4.5) {\tiny{$b_{\ell-p+1}$}};
\node at (-2.75, 4.5) {{$\cdots$}};
\node at (1, 4.5) {{$\cdots$}};
\draw (2.5,4.5) -- (2,4)  -- (1.5,4.5) -- (2,5) --  (2.5,4.5); \node at (2,4.5) {\tiny{$b_{\ell-p+1}$}};
\draw (3.5,4.5) -- (3,4)  -- (2.5,4.5) -- (3,5) --  (3.5,4.5); \node at (3,4.5) {\tiny{$b_{\ell-p+1}$}};
\draw[blue] (4,4) -- (3.5,4.5) -- (4,5) -- (4.5,4.5) --  (4,4); \node at (4,4.5) {\tiny{$a_{\ell-p+2}$}};
 \node at (5.6,4.5) {\tiny{row $k - \ell+p-1$}};

\node at (-3.5, 4) {{$\ddots$}};
\node at (3.5, 4) {{$\iddots$}};

\draw (-2.75,2.75) -- (-2.25,3.25) -- (-2.75,3.75) -- (-3.25,3.25) --  (-2.75,2.75); \node at (-2.75, 3.25) {\tiny{$b_{\ell-2}$}};
\node at (-1.5, 3.25) {{$\cdots$}};
\node at (-0.25, 3.25) {{$\cdots$}};
\draw (1.25,3.25) -- (.75,2.75)  -- (.25,3.25) -- (.75,3.75) --  (1.25,3.25); \node at (.75,3.25) {\tiny{$b_{\ell-2}$}};
\draw (2.25,3.25) -- (1.75,2.75)  -- (1.25,3.25) -- (1.75,3.75) --  (2.25,3.25); \node at (1.75,3.25) {\tiny{$b_{\ell-2}$}};
\draw[blue] (2.75,2.75) -- (2.25,3.25) -- (2.75,3.75) -- (3.25,3.25) --  (2.75,2.75); \node at (2.75,3.25) {\tiny{$a_{\ell-1}$}};
\node at (4.1,3.25) {\tiny{row $k-\ell +2$}};

\draw (-2.25,2.25) -- (-1.75,2.75) -- (-2.25,3.25) -- (-2.75,2.75) --  (-2.25,2.25); \node at (-2.25, 2.75) {\tiny{$b_{\ell-1}$}};
\node at (-1, 2.75) {{$\cdots$}};
\draw (.75,2.75) -- (.25,2.25)  -- (-.25,2.75) -- (.25,3.25) --  (.75,2.75); \node at (.25,2.75) {\tiny{$b_{\ell-1}$}};
\draw (1.75,2.75) -- (1.25,2.25)  -- (.75,2.75) -- (1.25,3.25) --  (1.75,2.75); \node at (1.25,2.75) {\tiny{$b_{\ell-1}$}};
\draw[blue] (2.25,2.25) -- (1.75,2.75) -- (2.25,3.25) -- (2.75,2.75) --  (2.25,2.25); \node at (2.25,2.75) {\tiny{$a_\ell$}};
\node at (3.65,2.75) {\tiny{row $k - \ell+1$}};

\draw[blue] (-1.75,1.75) -- (-1.25,2.25) -- (-1.75,2.75) -- (-2.25,2.25) --  (-1.75,1.75); \node at (-1.75, 2.25) {\tiny{$a_{\ell+1}$}};
\node at (-0.5, 2.25) {{$\cdots$}};
\draw[blue]  (.75,1.75) -- (.25,2.25) -- (.75,2.75) -- (1.25,2.25) --  (.75,1.75); \node at (.75,2.25) {\tiny{$a_{\ell+1}$}};
\draw[blue]  (1.75,1.75) -- (1.25,2.25) -- (1.75,2.75) -- (2.25,2.25) --  (1.75,1.75); \node at (1.75,2.25) {\tiny{$a_{\ell+1}$}}; 
\node at (3,2.25) {\tiny{row $ k -\ell$}}; 

\node at (-1.25, 1.75) {{$\ddots$}};
\node at (1.25, 1.75) {{$\iddots$}};

\draw[blue] (-.5,.5) -- (0,1) -- (-.5,1.5) -- (-1,1) -- (-.5,.5); \node at (-.5, 1) {\tiny{$a_{k-1}$}};
\draw[blue]  (.5,.5) -- (0,1) -- (.5,1.5) -- (1,1) -- (.5,.5); \node at (.5,1) {\tiny{$a_{k-1}$}}; 
\node at (1.5,1) {\tiny{row $2$}}; 

\draw[blue] (0,0) -- (-.5,.5) -- (0,1) -- (.5,.5) -- (0,0); \node at (0, .5) {\tiny{$a_k$}}; 
\node at (1, .5) {\tiny{row $1$}}; 

\draw[blue] (-2.25,2.25) --  (-1.75,2.75) -- (-1.25, 2.25);
\draw[blue] (.25,2.25) --  (.75,2.75) -- (1.25, 2.25) -- (1.75, 2.75) -- (2.25, 3.25) -- (2.75, 3.75);
\draw[blue] (3.5, 4.5) -- (4,5) -- (4.5,4.5);
\end{tikzpicture}
\]
\caption{The filling of $\gamma_k$ in the proof of Lemma \ref{lemma:LWdiagonals}}
\label{figure:gammakfilling}
\end{figure}

Now, we need to check that the sum of the entries of these $\ell$ fillings is equal to the leftover pieces. For the filling $p$, the sum of the entries is computed below.
\begin{align*}
\sum_{j=1}^{k-1} \sum_{i=1}^j P_{L(i,j)} & = \sum_{j=1}^{k-\ell} j P_{a_{k+1-j}} + \sum_{j=k-\ell+1}^{k-\ell+p-1} \left( (j-1) P_{b_{k-j}} + P_{a_{k-j+1}} \right) + \sum_{j=k-\ell+p}^{k-1} j P_{b_{k-j}} \\
& = \sum_{i=1}^{k-1} (k-i) P_{a_{i+1}} + \sum_{i=1}^{p-1} \left( (k - \ell + i-1) P_{b_{\ell - i}} + P_{a_{\ell-i+1}} \right) + \sum_{i=p}^{\ell-1} (k - \ell +i) P_{b_{\ell -i}} 
\end{align*}
The sum of all the fillings reduces as follows:
\begin{align*}
&\sum_{p=1}^{\ell} \left( \sum_{i=1}^{k-1} (k-i) P_{a_{i+1}} + \sum_{i=1}^{p-1} \left( (k - \ell + i-1) P_{b_{\ell - i}} + P_{a_{\ell-i+1}} \right) + \sum_{i=p}^{\ell-1} (k - \ell +i) P_{b_{\ell -i}} \right) \\
& = \ell \sum_{i=\ell}^{k-1} (k-i) P_{a_{i+1}} + \sum_{i=1}^{\ell-1} \sum_{p=i+1}^{\ell} \left( (k - \ell + i-1) P_{b_{\ell - i}} + P_{a_{\ell-i+1}} \right) + \sum_{i=1}^{\ell-1}\sum_{p=1}^{i} (k - \ell +i) P_{b_{\ell -i}} \\
& = \ell \sum_{i=\ell}^{k-1} (k-i) P_{a_{i+1}} + \sum_{i=1}^{\ell-1} (\ell -i)\left( (k - \ell + i-1) P_{b_{\ell - i}} + P_{a_{\ell-i+1}} \right) + \sum_{i=1}^{\ell-1} i (k - \ell +i) P_{b_{\ell -i}}\\
& = \ell \sum_{i=\ell}^{k-1} (k-i) P_{a_{i+1}} + \sum_{i=1}^{\ell-1} \left( i(k - i-1) P_{b_{i}} + iP_{a_{i+1}} + (\ell-i) (k -i) P_{b_{i}}\right) \\
& = \ell \sum_{i=\ell}^{k-1} (k-i) P_{a_{i+1}} + \sum_{i=1}^{\ell-1} \left( ((k-i)\ell -i) P_{b_{i}} + iP_{a_{i+1}}\right).
\end{align*}
This is exactly the leftover pieces from (\ref{eqn:prooflower}). As the sum of each of these $\ell$ fillings are bounded below by $M_k$, we have shown 
\[
k M_{k-1} + M_k + (k-1) M_{\overline{k+1}} \geq k M_{\overline{k}} + (k-\ell) M_k + \ell M_k.
\]

We now prove that $k M_k + k M_{\overline{k}}$ is equal to the minimum of these two terms. This depends entirely on which of the sequences minimizing $M_k$ and $M_{\overline{k}}$ begins first. Let $(j_1, \dots, j_{k-1})$ and $(\ell_1, \dots, \ell_{k-1})$ be sequences minimizing $M_k$ and $M_{\overline{k}}$ respectively and assume that $j_1 < \ell_1$. 

As $(j_1, \dots, j_{k-1})$ minimizes $\sum_{i=1}^{k-1} (k -i) P_{j_i}$, then it is clear that $(j_2, \dots, j_{k-1})$ minimizes $\sum_{i=2}^{k-1} (k -i) P_{j_i}$ with the condition that $j_1 < j_2$, i.e. $(j_2, \dots, j_{k-1})$ will be a sequence minimizing $M_{\overline{k-1}}(P_{j_1 +1}, \dots, P_m)$. This leads to the equality 
\[
M_k = (k-1) P_{j_1} + M_{\overline{k-1}}(P_{j_1+1}, \dots, P_m).
\]
As $(P_{j_1+1}, \dots, P_m)$ is contained in $(P_{j_1}, \dots, P_m)$, then any valid sequence for $M_{\overline{k-1}}(P_{j_1+1}, \dots, P_m)$ is a valid sequence for $M_{\overline{k-1}}$ as well, so that the second value is smaller than the former.  

As $j_1 < \ell_1$, then  $(j_1, \ell_1, \dots, \ell_{k-1})$ is a valid sequence of $M_{k+1}$ so that $M_{k+1} \leq k P_{j_1} + M_{\overline{k}}$. Then
\begin{align*}
k M_{\overline{k-1}} + M_{\overline{k}} + (k-1) M_{k+1} & \leq k M_{\overline{k-1}} + M_{\overline{k}} + (k-1) (kP_{j_1} + M_{\overline{k}} )\\
& \leq k ((k-1)P_{j_1} + M_{\overline{k-1}}) + kM_{\overline{k}} \\
& \leq k ((k-1)P_{j_1} + M_{\overline{k-1}}(P_{j_1+1}, \dots, P_m)) + kM_{\overline{k}} \\
& = k M_k + k M_{\overline{k}}
\end{align*}
Thus we have equality when $j_1< \ell_1$.  

Note that as $j_1$ is even, $j_1 +1$ is odd so that the parities of the indices in $(P_{j_1+1}, \dots, P_m)$ are the same as the original sequence. In the case of $\ell_1 < j_1$, restricting $M_{k-1}$ to $(P_{\ell_1+1}, \dots, P_m)$ requires us to take $M_{\overline{k-1}}(P_{\ell_1+1}, \dots, P_m)$ so that parities of the indices in the original sequence are correct. The proof then follows identically. 
\end{proof}

\begin{remark}
For finite MV polytopes, \cite{MVpolytopes} notes that the tropical pl\"{u}cker relations can be viewed as the naive tropicalization of the generalized Pl\"{u}cker relations of \cite{Tensorproductmultiplicities} on the generalized minor functions. 

In the affine case, these diagonal relations take the place of the tropical Pl\"{u}cker relations. Our initial attempt to prove these equations was to find positive functions which tropicalize to these diagonals, and thus find an analog of the Pl\"{u}cker relations. 

Unfortunately, the detropicalizated diagonal relations do not obviously tropicalize to the diagonal relations. To demonstrate this, we find the detropicalized diagonal relation for $k=2$ on the generalized minors by studying the representation theory of $\widehat{SL_2}$. 

Define the map $K_{0,1} : V(\omega_0) \otimes V(\omega_1) \rightarrow V(\omega_0 + \omega_1)$. In the subrepresentation $V(\omega_0 + \omega_1 - \delta)$ of the domain, we have the weight vectors
\begin{center}
$\omega_0 \cdot F_0F_1\omega_1 + F_1F_0\omega_0 \cdot \omega_1 - 2 s_1\omega_1 \cdot s_0 \omega_0,$

$\omega_0 \cdot F_1F_0F_1\omega_1 + 2 \omega_1 \cdot s_1s_0\omega_0 - F_1F_0\omega_0 \cdot s_1 \omega_1, $

$F_0F_1F_0\omega_0 \cdot \omega_1  + 2 \omega_0 \cdot s_0s_1\omega_1 - s_0\omega_0 \cdot F_1F_0F_1\omega_1.$
\end{center}
Thus, the representation $V(\omega_0) \otimes V(\omega_1) \otimes \text{ker}(K_{01})$ contains the vector
\begin{equation}\label{diagvector}
\begin{aligned}
v = &2(\omega^2_0 \cdot s_1\omega_1 \cdot s_0s_1\omega_1+\omega^2_1 \cdot s_0\omega_0 \cdot s_1s_0\omega_0-  s_0\omega_0^2\cdot s_1\omega_1^2) \\
&+ \omega_0\cdot\omega_1 (s_0\omega_0\cdot F_1F_0F_1\omega_1+ F_0F_1F_0\omega_0\cdot s_1\omega_1).
\end{aligned}
\end{equation}
As the kernel of the map $K: V(\omega_0)^{\otimes 2}\otimes V(\omega_1)^{\otimes 2} \rightarrow V(2\omega_0 + 2\omega_1)$ contains $V(\omega_0) \otimes V(\omega_1) \otimes \text{ker}(K_{01})$, the vector (\ref{diagvector}) is in ker$(K)$. Note that this vector is not contained in a irreducible subrepresentation of the kernel of $K$.

Let $x \in V(\omega_0)^{\otimes 2}\otimes V(\omega_1)^{\otimes 2}$. Consider the functions $\Delta_x: \widehat{SL_2} \rightarrow \C $ by
\[
\Delta_x(g) = \langle g \cdot x, v_{\omega_0}^2\otimes v_{\omega_1}^2 \rangle.
\]
For $x \in \ker(K)$, $\Delta_x =0$. By using $v$ from (\ref{diagvector}), this reduces to the following equation of generalized minors:
\begin{align}\label{untropdiag}
\Delta^2_1 \Delta_{2} \Delta_{\bar{3}}+  \Delta^2_{\bar{1}} \Delta_{\bar{2}} \Delta_{3} + \frac{1}{2}\Delta_1\Delta_{\bar{1}}(\Delta_{2} \Delta_{F_1F_0F_1\omega_1} + \Delta_{F_0F_1F_0\omega_0}\Delta_{\bar{2}})= \Delta^2_{2} \Delta^2_{\bar{2}} .
\end{align}
Note that the vectors $F_1F_0F_1\omega_1$ and $F_0F_1F_0\omega_0$ have weight which is not an extremal weight. As shown in Lemma \ref{lemma:LWdiagonals}, these terms will not contribute to the tropicalization, though the reason why is not clear.
\end{remark}

\subsection{Non-negative tropical points}

Let $(i_1, \dots, i_m)$ be a reduced word for $w$. Set $w_{k}^\ui = s_{i_1} s_{i_2} \cdots s_{i_k}$ for $1 \leq k \leq m$ and set $w_0^\ui = e$. Define the potential function on $L^{w^{-1}}$ as
\begin{align*}
\tau &= \Delta_{\omega_{i_1}}^{-1} \Delta_{s_{i_1}\omega_{i_1}}^{-1} \Delta_{\omega_{i_2}}^2 + \Delta_{\omega_{i_2}}^{-1} \Delta_{w_2^\ui \omega_{i_2}} ^{-1} \Delta_{s_{i_1}\omega_{i_1}}^{2} + \sum_{k=2}^{m-1} \Delta_{w_{k-1}^\ui\omega_{i_{k-1}}}^{-1} \Delta_{w_{k+1}^\ui \omega_{i_{k+1}}} ^{-1} \Delta_{w_k^\ui\omega_{i_k}}^{2}.
\end{align*}
In the Lusztig coordinates, as the generalized minors are positive functions by Lemma \ref{lemma:positiveminors} and $\tau$ is a subtraction-free expression of these minors, then $\tau$ is also a positive function. We will consider the non-negative tropical points $L^{w^{-1}}(\Z^\trop)_\geq^\tau = \{ \ell \in L^{w^{-1}}(\Z^\trop): \tau^\trop(\ell) \geq 0\}$, or more explicitly, this is the set of $\ell \in L^{w^{-1}}(\Z^\trop)$ such that
\begin{align*}
0 \leq \min_{k=2, \dots,m-1} \{ & 2 \Delta^\trop_{\omega_{i_2}}(\ell)-\Delta^\trop_{\omega_{i_1}}(\ell)-\Delta^\trop_{s_{i_1} \omega_{i_1}}(\ell),  2\Delta^\trop_{s_{i_1}\omega_{i_1}}(\ell)-\Delta^\trop_{\omega_{i_2}}(\ell) -\Delta^\trop_{w_2^\ui\omega_{i_2}}(\ell),\\
& 2\Delta^\trop_{w_k^\ui\omega_{i_k}}(\ell)-\Delta^\trop_{w_{k-1}^\ui\omega_{i_{k-1}}}(\ell)- \Delta^\trop_{w_{k+1}^\ui\omega_{i_{k+1}}} (\ell)\}.
\end{align*}
We want to show that, under the correct coordinates, we can associate $L^{w^{-1}}(\Z^\trop)_{\geq}^\tau \cong \N^m$. First, we show that using the terms of $\tau$, we can define a different set of coordinates on $L^{w^{-1}}$. 

\begin{lemma}\label{lemma:nonnegativetropical}
The map $\phi: L^{w^{-1}} \rightarrow (\C^\times)^m$ defined by 
\[
\phi(g) = \left(\frac{\Delta^2_{\omega_{i_2}}(g)}{\Delta_{\omega_{i_1}}(g) \Delta_{s_{i_1} \omega_{i_1}}(g)}, \frac{\Delta^2_{s_{i_1}\omega_{i_1}}(g)}{\Delta_{\omega_{i_2}}(g)\Delta_{w_2^\ui\omega_{i_2}}(g)}, \frac{\Delta^2_{w_2^\ui\omega_{i_2}}(g)}{\Delta_{s_{i_1} \omega_{i_1}}(g) \Delta_{w_3^\ui \omega_{i_3}}(g)}, \dots, \frac{\Delta^2_{w_{m-1}^\ui \omega_{i_{m-1}}}(g)}{\Delta_{w_{m-2}^\ui \omega_{i_{m-2}}}(g) \Delta_{w_m^\ui \omega_{i_{m}}}(g)} \right)
\]
is a birational isomorphism. 
\end{lemma}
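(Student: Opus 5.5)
The plan is to compose $\phi$ with the Lusztig parametrization $x_{\ui}$ and show that the composite $(\C^\times)^m \to (\C^\times)^m$ is a birational isomorphism given by a monomial map with unimodular (triangular) exponent matrix. Since $x_{\ui}$ is birational by \cite[Proposition 4.3]{DoubleBruhatKacMoodyWilliams}, this suffices.

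The first step is to compute the minors $\Delta_{w_k^\ui\omega_{i_k}}$ in Lusztig coordinates $x_{\ui}(p_1,\dots,p_m)$ using Lemma \ref{lemma:positiveminors}. The key claim is that each $\Delta_{w_k^\ui\omega_{i_k}}$ is a single monomial. The weight $w_k^\ui\omega_{i_k}$ is a chamber weight with extremal vector the staircase partition with $k$ rows, and we need to pass to $\omega_{i_k}$, the empty partition. Because of the alternating colouring, the only way to empty the staircase with the letters $i_1,\dots,i_k$, which alternate, is to remove the entire top row-colour class at each step. Only the first $k$ letters can be used in a nontrivial way, since after $k$ alternating removals the partition is empty. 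So there is a unique trail, and up to a positive constant (the factorials)
\[
\Delta_{w_k^\ui\omega_{i_k}}(x_{\ui}(p)) = c_k \prod_{s=1}^{k} p_s^{\,k+1-s}.
\]
This matches Lemma \ref{lemma:Mequations}: the valid sequence is forced to be $(1,\dots,k)$, and the minimizing sum is $\sum (k+1-s)P_s$. The indexing convention ($p_s$ versus $p_{m+1-s}$, and $w$ versus $w^{-1}$) will need care, but either way we get a monomial. Also $\Delta_{\omega_{i}} = 1$.

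Next, I will compute the $k$-th coordinate of $\phi\circ x_{\ui}$, which is the ratio $\Delta_{k-1}^2/(\Delta_{k-2}\Delta_k)$ with indices shifted appropriately. Writing $E_k(s) = \max(k+1-s,0)$, the exponent of $p_s$ is
\[
2E_{k-1}(s) - E_{k-2}(s) - E_k(s),
\]
which is a second difference of a piecewise-linear function. It equals $-1$ exactly when $s = k$ and $0$ otherwise. So the $k$-th coordinate is $C_k\, p_k^{-1}$ for a nonzero constant $C_k$. The first two coordinates, which involve $\Delta_{\omega_{i_1}}, \Delta_{\omega_{i_2}} = 1$, need to be checked separately. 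They should give $p_1^{-1}$ and $p_2^{-1}$ up to constants, possibly times earlier variables, which still leaves the map triangular.

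Hence $\phi\circ x_{\ui}$ is, up to constants, $(p_1,\dots,p_m)\mapsto (p_1^{-1},\dots,p_m^{-1})$, or at worst a triangular monomial map with diagonal exponents $\pm1$. Such a map is an automorphism of the torus with an explicit monomial inverse. Therefore $\phi = (\phi\circ x_{\ui})\circ x_{\ui}^{-1}$ is birational.

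The main obstacle is the uniqueness-of-trail claim: verifying, via the Fock space colouring, that removing boxes of alternating colours $i_1, i_2, \dots$ from the $k$-row staircase reaches $\omega_{i_k}$ in exactly one way, with each step removing a full row. I would prove this by induction on $k$. The only removable boxes of the staircase's top colour are the entire top row, and removing them leaves the $(k-1)$-row staircase of the opposite charge colour pattern.
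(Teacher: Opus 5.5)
There is a genuine gap at the key step: for $m\ge 3$ the chamber minors $\Delta_{w_k^{\ui}\omega_{i_k}}$ are \emph{not} monomials in the Lusztig coordinates.

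Your uniqueness-of-trail induction assumes two things: that the step with letter $i_1$ removes the whole top row, and that only the first $k$ letters act nontrivially. Neither holds. The factor $x_i(p)=\exp(pE_i)$ produces every partition obtained by removing \emph{any} subset of the removable $i$-coloured boxes, including the empty subset. Moreover, the word alternates, so the letters $i_{k+1},\dots,i_m$ repeat the colours of $i_1,i_2,\dots$ and can remove leftover boxes later.

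The simplest instance is $k=1$. Since $E_jF_iv_{\omega_i}=\delta_{ij}v_{\omega_i}$, one gets
\[
\Delta_{s_{i_1}\omega_{i_1}}(x_{\ui}(p_1,\dots,p_m)) = p_1+p_3+p_5+\cdots ,
\]
a sum over all positions carrying the letter $i_1$. The paper's Lemma \ref{lemma:Mequations} shows the same thing tropically: $M_\gamma$ is a minimum over \emph{all} valid sequences, not just $(1,\dots,k)$. The example with $\ui=(1,0,1,0,1,0,1)$ exhibits a trail of the $4$-row staircase that is not row-by-row and uses $P_1,\dots,P_6$.

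A chain argument shows the trail is forced only when the staircase has $m$ or $m-1$ rows, so your computation is fine for $m\le 2$. For $k\le m-2$ there is slack. Consequently $\phi\circ x_{\ui}$ is not the monomial map $(C_kp_k^{-1})_k$; its first coordinate is $1/(p_1+p_3+\cdots)$. The triangular-unimodular conclusion therefore does not follow.

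What is missing is a reason that $\psi_1=(\Delta_{w_k^{\ui}\omega_{i_k}})_{k=1}^m$ is birational. Equivalently, you need that these $m$ minors are algebraically independent and generate the function field of $L^{w^{-1}}$. The paper gets this from the Geiss--Leclerc--Schr\"oer cluster structure \cite[Theorem 3.3, Proposition 9.1]{Kac-MoodyGroups}, in which these minors form an initial seed of $\C[L^{w^{-1}}]$.

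After that, $\phi=\psi_2\circ\psi_1$, using $\Delta_{\omega_i}=1$. Here $\psi_2$ is a monomial map whose exponent matrix is triangular with diagonal entries $-1$. Your second-difference exponent computation is exactly this step, applied to the wrong variables.

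If you want to avoid cluster theory, you must invert $p\mapsto(\Delta_{w_k^{\ui}\omega_{i_k}}(x_{\ui}(p)))_k$ rationally, via a Chamber Ansatz type formula. That inversion is the real content of the lemma, not a formality.
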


\begin{proof}
By \cite[Theorem 3.3, Proposition 9.1]{Kac-MoodyGroups}, for the reduced word $(i_1, \dots, i_m)$ of $w$, the tuple 
\[
(\Delta_{s_{i_1} \omega_{i_1}}, \Delta_{s_{i_1} s_{i_2} \omega_{i_2}}, \dots,  \Delta_{w_{m-1}^\ui \omega_{i_{m-1}}}, \Delta_{w_m^\ui \omega_{i_m}} )
\]
is an initial seed for the cluster structure of $\C[L^{w^{-1}}]$. This seed induces an isomorphism on the function fields of $L^{w^{-1}}$ and $(\C^\times)^m$. 
Since the map between the function fields is an isomorphism, the map $\psi_1: L^{w^{-1}} \rightarrow (\C^\times)^m$ given by 
\[
\psi_1(g) = \left( \Delta_{w_k^\ui \omega_{i_k}}(g) \right)_{k=1}^m
\]
is also a birational isomorphism.

Consider the map $\psi_2: (\C^\times)^m \rightarrow (\C^\times)^m$ by 
\[
\psi_2(x_1, \dots, x_m) \mapsto \left( \frac{1}{x_1}, \frac{x_1^2}{x_2}, \frac{x_2^2}{x_1x_3}, \dots, \frac{x_k^2}{x_{k-1}x_{k+1}}, \dots, \frac{x^2_{m-1}}{x_{m-2}x_{m+1}} \right)
\]
This map is an isomorphism so the composition $\psi_2 \circ \psi_1$ is still a birational isomorphism. As $\Delta_{\omega_i} =1$ on $L^{w^{-1}}$, then $\psi_2 \circ \psi_1 = \phi$. 
\end{proof}

By this lemma, we have that $\phi^{-1}$ gives a positive structure on $L^{w^{-1}}$. Since $\phi \circ x_{\ui}$ is a positive map, then $x_{\ui}$ and $\phi^{-1}$ are different coordinates in the same positive atlas on $L^{w^{-1}}$. Using the coordinates of $\phi^{-1}$, then $\ell \in L^{w^{-1}}(\Z^\trop)_\geq^\tau$ if and only if $\ell \in \N^m$ by the definition of $\phi$.

\begin{corollary}\label{lemma:tropicallusztig}
$L^{w^{-1}}(\Z^\trop)^\tau_\geq \cong \N^m$.
\end{corollary}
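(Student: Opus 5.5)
The plan is to use the positive chart $\phi^{-1}$ from Lemma \ref{lemma:nonnegativetropical} as the coordinate system in which tropical points are written, and then check that $\tau^\trop$ becomes the coordinatewise minimum.

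First, I will note that $\phi\circ x_{\ui}$ is positive: each component of $\phi$ is a Laurent monomial in generalized minors, and each minor is a subtraction-free polynomial in the Lusztig coordinates by Lemma \ref{lemma:positiveminors}. In the other direction, $x_{\ui}^{-1}\circ\phi^{-1}$ is positive because $\psi_2$ has a monomial inverse, and the cluster chart $\psi_1$ is positively related to the Lusztig chart. For this last point I would invoke \cite{Kac-MoodyGroups}: the Lusztig parameters are Laurent monomials in the initial cluster variables $\Delta_{w_k^\ui\omega_{i_k}}$, which is the standard Chamber Ansatz. Hence both charts lie in the same positive atlas. Tropicalization then gives a bijection
\[
\phi^\trop: L^{w^{-1}}(\Z^\trop)\to\Z^m,
\]
compatible with Corollary \ref{corollary:tropicalpoints}.

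Second, I will write $\tau$ in the chart $\phi^{-1}$. By construction, the $m$ summands of $\tau$ are exactly the $m$ components $y_1,\dots,y_m$ of $\phi$. The first summand is $\Delta_{\omega_{i_1}}^{-1}\Delta_{s_{i_1}\omega_{i_1}}^{-1}\Delta_{\omega_{i_2}}^{2}$, and $\Delta_{\omega_i}=1$ on $L^{w^{-1}}$. So $\tau=y_1+\cdots+y_m$, and
\[
\tau^\trop(\ell)=\min_k y_k^\trop(\ell)=\min_k \ell_k,
\]
where $\ell=(\ell_1,\dots,\ell_m)$ are the $\phi$-tropical coordinates. Therefore $\tau^\trop(\ell)\ge 0$ holds exactly when every $\ell_k\ge0$, which gives $L^{w^{-1}}(\Z^\trop)^\tau_\geq\cong\N^m$.

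I expect the main obstacle to be the first step, namely verifying that the inverse transition is subtraction-free. Lemma \ref{lemma:nonnegativetropical} only gives birationality, and a birational map need not be positive. If the Chamber Ansatz citation does not directly apply in the affine rank 2 setting, I would instead compute directly from Lemma \ref{lemma:Mequations}. Since $\ui$ alternates, the minor $\Delta_{w_k^\ui\omega_{i_k}}$ has a unique trail, so it is a monomial $\prod_{s\le k}p_s^{k+1-s}$, up to the factorial constants. This makes $\psi_1\circ x_{\ui}$ a triangular monomial map with unit-determinant exponent matrix, and hence a positive isomorphism with monomial inverse.
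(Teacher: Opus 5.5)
Your main line is the paper's own argument. The $m$ summands of $\tau$ are exactly the $m$ components of $\phi$, so in the chart $\phi^{-1}$ one gets $\tau^\trop=\min_k \ell_k$. The paper justifies the chart only by noting that $\phi\circ x_{\ui}$ is positive, and then asserts that $x_{\ui}$ and $\phi^{-1}$ lie in one positive atlas. You are right that the inverse transition must also be subtraction-free; otherwise $(\phi\circ x_{\ui})^\trop$ need not be a bijection of $\Z^m$. However, both arguments you give for that step are false.

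\textbf{The fallback.} It fails because the initial minors are not monomials. By Lemma \ref{lemma:Mequations}, for $w=w_m$ the valid sequences for $\gamma_{k+1}=w_k^{\ui}\omega_{i_k}$ have slack $m-k$, so the trail is unique only when $k\ge m-1$. Take $w=w_3$, $\ui=(1,0,1)$. The single $1$-coloured box of $s_1\omega_1$ can be removed at step $1$ or at step $3$, and indeed $\Delta_{s_1\omega_1}(x_{\ui}(p_1,p_2,p_3))=p_1+p_3$. You can check this directly: $x_0$ fixes both $F_1v_{\omega_1}$ and $v_{\omega_1}$. So $\psi_1\circ x_{\ui}$ is not a monomial map.

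\textbf{The Chamber Ansatz claim.} The same example refutes it. Here $\Delta_{\gamma_3}$ and $\Delta_{\gamma_4}$ do have unique trails, which gives $p_3/p_1=c\,\Delta_{\gamma_4}\Delta_{\gamma_3}^{-2}$ for a constant $c>0$. Hence
\[
p_1=\frac{\Delta_{\gamma_2}\Delta_{\gamma_3}^{2}}{\Delta_{\gamma_3}^{2}+c\,\Delta_{\gamma_4}},
\]
which is not a Laurent monomial in the initial cluster variables. The Berenstein--Fomin--Zelevinsky Chamber Ansatz produces Laurent monomials in \emph{twisted} minors, not in the $\Delta_{w_k^{\ui}\omega_{i_k}}$ themselves.

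\textbf{What remains to be done.} The formula above is still subtraction-free, as are $p_3=(p_3/p_1)\,p_1$ and $p_2=c'\Delta_{\gamma_3}p_1^{-2}$. So the statement you need is plausible, but your proposal does not prove it. To close the gap, do one of the following:
\begin{enumerate}
\item Show that $x_{\ui}^{-1}\circ\psi_1^{-1}$ is subtraction-free for every $m$, for instance by an elimination of the kind above, starting from the two monomial minors $\Delta_{\gamma_m}$ and $\Delta_{\gamma_{m+1}}$.
\item Show directly that the piecewise-linear map $(P_1,\dots,P_m)\mapsto(M_{\gamma_2},\dots,M_{\gamma_{m+1}})$ of Lemma \ref{lemma:Mequations} is a bijection of $\Z^m$. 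Since $\psi_2^\trop$ is unimodular, this is all the corollary actually requires.
\end{enumerate}
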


We can show that the collection $(M_\gamma)_{\gamma \in \Gamma^w}$ satisfies the edge inequalities \ref{condition:positivity} on $L^{w^{-1}}(\Z^\trop)_\geq^\tau$.

\begin{lemma}\label{lemma:LWpositivity}
The collection $(M_\gamma)_{\gamma \in \Gamma^w}$ satisfies the edge inequalities on  $L^{w^{-1}}(\Z^\trop)_\geq^{\tau}$. More explicitly, for $\ell \in L^{w^{-1}}(\Z^\trop)_\geq^\tau$, 
\begin{enumerate}[label=(\roman*)]
 \item\label{condition:firstedgeequality} if $\gamma_{k+1} \in \Gamma^w$, then $M_{k-1}(\ell) + M_{k+1}(\ell) \leq 2M_{k}(\ell)$.
 \item\label{condition:secondedgeequality} if $\ogamma_{k+1} \in \Gamma^w$, then $ M_{\overline{k-1}}(\ell) + M_{\overline{k+1}}(\ell) \leq 2M_{\overline{k}}(\ell)$.
\end{enumerate}
\end{lemma}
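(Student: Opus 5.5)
The plan is to split the two families of inequalities according to whether they come directly from the potential $\tau$ or must be derived. Without loss of generality take $w=w_m$, so $i_1=1$ and the reduced word alternates $1,0,1,0,\dots$. Then $w_k^\ui=w_k$ and $w_k^\ui\omega_{i_k}=\gamma_{k+1}$, while $\omega_{i_1}=\omega_1=\ogamma_1$ and $\omega_{i_2}=\omega_0=\gamma_1$. Since $\Delta_{\omega_i}\equiv 1$ on $L^{w^{-1}}$, we have $M_1=M_{\overline 1}=0$ there.

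\textbf{Step 1 (the chain through $w$).} Reading the terms of $\tau^\trop$ in this notation, the $k$-th term ($2\le k\le m-1$) is exactly $2M_{k+1}-M_k-M_{k+2}$. The first two terms become the boundary inequalities at the bottom of the chain, once $M_0$ is identified with the appropriate value forced by the vertex formula $\mu_0=0$. So $\tau^\trop(\ell)\ge 0$, i.e.\ $\ell\in\N^m$ in the $\phi$-coordinates of Lemma~\ref{lemma:nonnegativetropical}, gives directly $M_{k-1}+M_{k+1}\le 2M_k$ whenever $\gamma_{k+1}\in\Gamma^w$. By Remark~\ref{remark:gammaw} this is precisely condition~\ref{condition:firstedgeequality}. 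For $w=\ow_m$ the roles of the two chains swap, and the same argument gives condition~\ref{condition:secondedgeequality} directly.

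\textbf{Step 2 (the opposite chain).} For the $\ogamma$-side, the potential says nothing directly, so I would derive it from Lemma~\ref{lemma:LWdiagonals} together with Step~1, by induction on $k$. I would write each diagonal relation as two inequalities: one for each term of the minimum being at least $kM_k+kM_{\overline k}$. Using the $k$-th and $(k+1)$-st diagonals, the sum
\[
(2M_{\overline k}-M_{\overline{k-1}}-M_{\overline{k+1}})+(2M_k-M_{k-1}-M_{k+1})
\]
should be expressible as a nonnegative combination of the diagonal slacks, mirroring the telescoping manipulation in the proof of Lemma~\ref{lemma:BZdata}. I would then split into cases according to which term attains the minimum in the $k$-th diagonal, that is, which of the minimizing sequences for $M_k$ and $M_{\overline k}$ begins first, as at the end of the proof of Lemma~\ref{lemma:LWdiagonals}. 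In the case where the active term isolates $M_{\overline{k+1}}$, concavity on the $\gamma$-chain from Step~1 together with the inductive hypothesis should bound $M_{\overline{k+1}}$ by $2M_{\overline k}-M_{\overline{k-1}}$. The base case $k=1$ uses $M_{\overline 1}=0$ and the explicit formula for $M_{\overline 2}$ from Lemma~\ref{lemma:Mequations}.

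\textbf{Fallback and main obstacle.} If the diagonal manipulation does not close, I would argue combinatorially via Lemma~\ref{lemma:Mequations}. Take two minimizing sequences for $M_{\overline k}$, interleave and recombine them into a valid sequence for $M_{\overline{k-1}}$ and one for $M_{\overline{k+1}}$, in the spirit of the fillings in Lemma~\ref{lemma:LWdiagonals}. Nonnegativity would be used in the form of Step~1, translated into inequalities among the $P_j$. I expect Step~2 to be the main obstacle, because nonnegativity is imposed only along the single chamber-weight chain $(\gamma_{k})$ of the reduced word, and transferring it to the other chain requires the diagonal relations in a nontrivial way. The edge cases $k=m,m+1$, where $M_{\overline{m+1}}=\infty$, also need separate care, and there condition~\ref{condition:secondedgeequality} should only be asserted for $\ogamma_{k+1}\in\Gamma^w$, as stated.
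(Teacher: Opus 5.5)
Your Step 1 is correct and matches the paper: in chamber-weight notation $\tau^{\trop}=\min_{1\le k\le m}a_k$ with $a_k=2M_k-M_{k-1}-M_{k+1}$, which is (i).

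The gap is in Step 2. The mechanisms you name cannot give a \emph{lower} bound on $\bar a_k=2M_{\overline k}-M_{\overline{k-1}}-M_{\overline{k+1}}$.
\begin{enumerate}
\item Writing $\bar a_k+a_k$ as a nonnegative combination of diagonal slacks would at best give $\bar a_k\ge -a_k$. This is vacuous since $a_k\ge 0$.
\item Suppose the term containing $M_{\overline{k+1}}$ is active in the $k$-th diagonal, i.e.\ $\sum_{s=2}^{k}(s-1)\bar a_s=\sum_{s=1}^{k-1}sa_s$. Then $(k-1)\bar a_k=\sum_{s=1}^{k-1}sa_s-\sum_{s=2}^{k-1}(s-1)\bar a_s$. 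An inductive hypothesis $\bar a_j\ge 0$ for $j<k$ therefore enters with the wrong sign and only bounds $\bar a_k$ from above.
\item The other case, where the term containing $M_{k+1}$ is active, is not treated at all.
\end{enumerate}

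The missing idea is to combine the active equality in the $k$-th diagonal with the \emph{inequality} from a neighbouring diagonal. Choose the neighbour so that everything cancels except two adjacent edge lengths.
\begin{enumerate}
\item In the case above, the $(k-1)$-st diagonal gives $\sum_{s=2}^{k-1}(s-1)\bar a_s-\sum_{s=1}^{k-2}sa_s\le 0$. Substituting this into the equality gives $\bar a_k\ge a_{k-1}$. For $k=2$ the equality already reads $\bar a_2=a_1$.
\item If instead the term containing $M_{k+1}$ is active, i.e.\ $\sum_{s=2}^{k}(s-1)a_s=\sum_{s=1}^{k-1}s\bar a_s$, subtract this from the $(k+1)$-st diagonal's bound $\sum_{s=2}^{k+1}(s-1)a_s-\sum_{s=1}^{k}s\bar a_s\le 0$. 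This gives $ka_{k+1}\le k\bar a_k$.
\item For $k=1$, the second diagonal gives $\bar a_1\ge a_2$ directly.
\end{enumerate}
Step 1 then finishes every case, and no induction on $k$ is needed. The second case needs $a_{k+1}\ge 0$. Step 1 supplies this precisely because $\ogamma_{k+1}\in\Gamma^w$ forces $k+1\le m$ for $w=w_m$, and this is where the hypothesis of (ii) actually enters.

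Your combinatorial fallback is too unspecified to assess. It is also unnecessary once Lemma~\ref{lemma:LWdiagonals} is available.
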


\begin{proof}
Without loss of generality, assume $w = w_m$. For convenience, we will denote $a_k = 2M_k - M_{k-1} - M_{k+1}$ and $\bar{a}_k = 2M_{\overline{k}} - M_{\overline{k-1}} - M_{\overline{k+1}}$. 

In terms of chamber weights, $\tau = \Delta_{\ogamma_1}^{-1} \Delta_{\gamma_2}^{-1} \Delta_{\gamma_1}^2 + \Delta_{\gamma_1}^{-1} \Delta_{\gamma_3} ^{-1} \Delta_{\gamma_2}^{2} + \sum_{k=2}^{m-1} \Delta_{\gamma_k}^{-1} \Delta_{\gamma_{k+2}} ^{-1} \Delta_{\gamma_{k+1}}^{2}$. 
Thus, $\tau^\trop = \min_{1\leq k \leq m}\{a_k \}$. For $\ell \in L^{w^{-1}}(\Z^\trop)_{\geq}^\tau$ then $\tau^\trop (\ell) \geq 0$ or equivalently, $a_k(\ell) \geq 0$ for $1 \leq k \leq m$. Hence if $\gamma_{k+1} \in \Gamma^w$ the edge inequality \ref{condition:firstedgeequality} holds.

Suppose $\ogamma_{k+1} \in \Gamma^w$. By assumption, the $(k+1)^\text{st}$ diagonal holds:
\begin{align*}
\max \{ & k\bar{a}_{k+1} + (k-1) \bar{a}_{k} + \cdots + 2\bar{a}_3 + \bar{a}_2 - a_1 - 2 a_2 - \cdots - k a_{k},  \\  & k a_{k+1} + (k-1) a_{k} + \cdots + 2 a_3 +  a_2 -\bar{a}_1 - 2\bar{a}_2 - \cdots - k\bar{a}_{k} \} =0.
\end{align*}
If $k=1$, then the second term gives us that $a_2 \leq \bar{a}_1$ and hence $\bar{a}_1$ is positive since $a_2 \geq 0$. Otherwise, by the $k^\text{th}$ diagonal, one of the following equalities holds:
\begin{align}
 (k-1) \bar{a}_{k} + \cdots + 2 \bar{a}_3 +  \bar{a}_2 &= a_1 + 2a_2 + \cdots + (k-1) a_{k-1}, \label{equation:second}\\
(k-1) a_{k} + \cdots + 2 a_3 +  a_2 &= \bar{a}_1 + 2\bar{a}_2 + \cdots + (k-1)\bar{a}_{k-1}. \label{equation:first}
 \end{align}
 If (\ref{equation:first}) holds, then from the second term in the maximum of the $(k+1)^\text{st}$ diagonal, $k a_{k+1}\leq k \bar{a}_{k}$. Thus $\bar{a}_{k}$ is positive as we have already showed that $a_{k+1} \geq 0$.

Suppose that (\ref{equation:second}) holds. This is equivalent to 
\[
 (k-1) \bar{a}_{k} = -( (k-2)\bar{a}_{k-1} + \cdots + 2 \bar{a}_3 +  \bar{a}_2 - a_1 - 2a_2 - \cdots - (k-2) a_{k-2}) +  (k-1) a_{k-1}.
\]
When $k =2$, this equation gives $\bar{a}_2 = a_1$ and hence $\bar{a}_2$ is positive. Otherwise, by the $(k-1)^\text{th}$ diagonal, we have the inequality
\[
(k-2)\bar{a}_{k-1} + \cdots + 2\bar{a}_3 + \bar{a}_2 - a_1 - 2 a_2 - \cdots - (k-2) a_{k-2} \leq 0
\]
and hence $(k-1)\bar{a}_{k} \geq(k-1) a_{k-1}\geq 0$ so $\bar{a}_k$ is positive. Thus if $\ogamma_{k+1} \in \Gamma^w$, then \ref{condition:secondedgeequality} holds. 
\end{proof}

\begin{figure}[ht] 
 \begin{subfigure}{\linewidth}
\[
\begin{tikzpicture}
\node[vertex] (U1) at (2.4, 0.6)[label=below right:$\mu_{k-2}$] {}; 
\node[vertex] (U2) at (3, 1.05)[label=below right:$\mu_{k-1}$] {}; 
\node[vertex] (U3) at (4.2, 2.55)[label=below right:$\mu_k$] {}; 
\node[vertex] (U4) at (4.8, 3.6) [label=below right:$\mu_{k+1}$] {}; 
\node[vertex] (W3) at (-4.2, 2.25) [label=below left:$\omu_k$]{}; 
\node[vertex] (W2) at (-3, 0.75)[label=below left:$\omu_{k-1}$] {}; 

\draw[thick] (U1) -- (U2) -- (U3) -- (U4);
\draw[thick] (W3) -- (W2);

\draw[thick, dotted, blue] (-3.6,1.5) -- (U4);
\draw[thick, blue] (W2) -- (U3);

\end{tikzpicture}
\]
\caption{Using the $k+1$ and $k$ diagonals}
\end{subfigure}
\begin{subfigure}{\linewidth}
\[
\begin{tikzpicture}
\node[vertex] (U1) at (2.4, 0.6)[label=below right:$\mu_{k-2}$] {}; 
\node[vertex] (U2) at (3, 1.05)[label=below right:$\mu_{k-1}$] {}; 
\node[vertex] (U3) at (4.2, 2.55)[label=below right:$\mu_k$] {}; 
\node[vertex] (U4) at (4.8, 3.6) [label=below right:$\mu_{k+1}$] {}; 
\node[vertex] (W3) at (-4.2, 2.85) [label=below left:$\omu_k$]{}; 
\node[vertex] (W2) at (-3, 1.35)[label=below left:$\omu_{k-1}$] {}; 

\draw[thick] (U1) -- (U2) -- (U3) -- (U4);
\draw[thick] (W3) -- (W2);

\draw[thick, dotted, blue] (-3.6, 2.1) -- (U1);
\draw[thick, blue] (W3) -- (U2);

\end{tikzpicture}
\]
\caption{Using the $k$ and $k-1$ diagonals}
\end{subfigure}
\caption{Visualization of the proof of Lemma \ref{lemma:LWpositivity}}
\label{figure:pictureofproof}
\end{figure}

Recall that the diagonals  can be described as follows. Written as in the above proof, the first term in the $k^\text{th}$ diagonal relation guarantees that $k^\text{th}$ vertex on the left side of the polytope is on or below the line in $\alpha_0$ direction from the $k-1^\text{st}$ vertex on the left side. The second term guarantees that the $k-1^\text{st}$ vertex on the left side is on or above the line in the $-\alpha_1$ direction from the $k^\text{th}$ vertex on the right side. The proof uses two diagonals to prove that $\bar{a}_k$ is larger than one of the edges on the left side of the polytope. The two cases can be visualized in Figure \ref{figure:pictureofproof}.  

If $\gamma \not\in \Gamma^w$, then $M_\gamma$ is $\infty$ as it is the minimum of an empty set. We want to redefine the minors $\Delta_\gamma$ so that the tropicalized functions $M_\gamma$ are a BZ datum. 

Inspired by the finite case, for each $u \in W$, we define $u_w$ as the maximal length element such that $u_w \leq_R v$ and $u_w \leq w$. In $\widehat{SL_2}$,  it is easy to see this is well-defined and unique:
\begin{center}
\begin{tabular}{ccc}
 \multicolumn{1}{l}{if $w = w_m$, then} && \multicolumn{1}{l}{if $w = \ow_m$, then} \\
 $u_w = \begin{cases} u, &\text{ if } u \leq w \\
 w_m
 , & \text{ if } u = w_k \text{ and } u \not\leq w\\
 \ow_{m-1}
 , & \text{ if } u = \ow_k \text{ and } u \not\leq w\end{cases}$
 & {\color{white}s} &
 $u_w = \begin{cases} u, &\text{ if } u \leq w \\
 w_{m-1}
 , & \text{ if } u = w_k\text{ and } u \not\leq w\\
 \ow_{m}
 , & \text{ if } u = \ow_k\text{ and } u \not\leq w \end{cases}$
\end{tabular}
\end{center}
This function takes the labelling on vertices above the highest vertex $w$ and maps them to the largest Weyl group element smaller than $w$ on the same side of the polytope. For example, when $w = w_m$, this function takes the labels of the vertices on the right side of the polytope above $\mu_w$ and maps them to $w$. On the left side, this function takes the vertices above $\mu_{\ow_{m-1}}$ and maps the label to $\ow_{m-1}$. 

For $u \omega_i \in \Gamma$, we redefine the minors $\Delta^\text{new}_{u\omega_i} := \Delta_{u_w^{-1}u\omega_i, u\omega_i}$.
 Note that when $u\omega_i \in \Gamma^w$, then $\Delta_{u\omega_i}^\text{new} = \Delta_{u\omega_i}$. For $\gamma \not\in\Gamma^w$, we can explicitly write out the new generalized minor functions.  
\begin{definition}
Fix $w \in W$. If $\ogamma_k \not\in \Gamma^w$, we define the minors
\[
\Delta^{\text{new}}_{\overline{k}}(g) = \begin{dcases} \langle g \cdot v_{\gamma_k}, v_{\ow_{m-1}^{-1} \ogamma_k} \rangle, & \text{ if }w = w_m \\
\langle g \cdot v_{\ogamma_k}, v_{\ow_m^{-1} \ogamma_k} \rangle, & \text{ if } w = \ow_m \end{dcases}
\]
If $ \gamma_k \not\in \Gamma^w$, we define the minors
\[
\Delta^{\text{new}}_{k}(g)= \begin{dcases} \langle g \cdot v_{\gamma_{k}}, v_{w_m^{-1}\gamma_{k}} \rangle, & \text{ if }w = w_m \\ \langle g \cdot v_{\gamma_{k}}, v_{w_{m-1}^{-1} \gamma_{k}} \rangle, & \text{ if }w = \ow_m \end{dcases}
\]
\end{definition}

By definition of $u_w^{-1} u \omega_i$, there is only one $\ui$-path from $u\omega_i$ to $u_w^{-1} u\omega_i$ and thus the new generalized minor $\Delta^\text{new}_{u\omega_i, u_w^{-1} u\omega_i}(x_{\ui}(p_1, \dots, p_m))$ is just a product of the $p_i$'s. 

\begin{lemma}\label{lemma:detropicalLWpositivities}
Fix $w \in W$ and consider $x_{i_m}(p_m) \cdots x_{i_1}(p_1) \in L^{w^{-1}}$. If $\ogamma_k \not\in \Gamma^w$, then
\begin{align*}
\Delta^{\textup{new}}_{\overline{k}}(x_{i_m}(p_m) \cdots x_{i_1}(p_1)) = \begin{dcases} \prod_{s=2}^m \frac{p_s^{k-(s-2)}}{(k-(s-2))!}, & \text{ if }w = w_m \\
\prod_{s=1}^m \frac{p_s^{k-(s-1)}}{(k-(s-1))!}, & \text{ if } w = \ow_m \end{dcases}
\end{align*}
If $\gamma_k \not\in \Gamma^w$, then
\begin{align*}
\Delta^{\textup{new}}_{k}(x_{i_m}(p_m) \cdots x_{i_1}(p_1)) =\begin{dcases} \prod_{s=1}^m \frac{p_s^{k-(s-1)}}{(k-(s-1))!}, & \text{ if }w = w_m \\ \prod_{s=2}^m \frac{p_s^{k-(s-2)}}{(k-(s-2))!} , & \text{ if }w = \ow_m \end{dcases}
\end{align*}
\end{lemma}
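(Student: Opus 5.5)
The plan is to compute the new minor directly from the action of $x_{i_m}(p_m)\cdots x_{i_1}(p_1)$ on the extremal vector, using the same expansion as in Lemma \ref{lemma:positiveminors}. Without loss of generality take $w=w_m$ and treat $\Delta^{\mathrm{new}}_k$ with $\gamma_k\notin\Gamma^w$, so $k\geq m+2$. The other three cases follow by the same argument with the roles of $0,1$ swapped and the index shifted by one. By iterating (\ref{equation:induction2}), $\langle x_{i_m}(p_m)\cdots x_{i_1}(p_1)\cdot v_{\gamma_k}, v_{w_m^{-1}\gamma_k}\rangle$ is a sum over $\ui$-trails $\gamma_k=\mu_0\supseteq\mu_1\supseteq\cdots\supseteq\mu_m=w_m^{-1}\gamma_k$. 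Here $\mu_{s-1}\setminus\mu_s$ consists of $i_s$-coloured boxes. Each trail contributes $\prod_s p_s^{|\mu_{s-1}\setminus\mu_s|}/|\mu_{s-1}\setminus\mu_s|!$. The paper's indexing convention (order of $p_i$ versus $i_s$) must be tracked carefully, and I will follow the ordering used in the displayed formula just before Lemma \ref{lemma:Mequations}.

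\textbf{Step 1: identify the target partition.} The target $w_m^{-1}\gamma_k$ is the extremal weight $s_{i_m}\cdots s_{i_1}\gamma_k$. In terms of charged partitions, applying a simple reflection $s_i$ to a staircase $(j,j-1,\dots,1)$ whose removable boxes all have colour $i$ removes every removable $i$-coloured box. These boxes are exactly the top boxes of each column, one per row end, and there are $j$ of them. The result is the staircase $(j-1,\dots,1)$ with the charge unchanged. So the reflections successively strip the outer diagonal. Since $\gamma_k$ corresponds to $(k-1,\dots,1)$ (or $k$ rows, with the paper's convention) and the word alternates starting with $i_1=1$, after $s$ steps we get the staircase with $s$ fewer rows. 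In particular $w_m^{-1}\gamma_k$ is the staircase of size $k-m$ (shifted appropriately), which is nonempty because $k\geq m+2$.

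\textbf{Step 2: uniqueness of the trail.} I will show that the only $\ui$-trail from $\gamma_k$ to this target removes, at step $s$, the entire outer diagonal of the current staircase. At each step only $i_s$-coloured boxes can be removed, and removable $i_s$-coloured boxes of a staircase lie on its outer rim. The total number of boxes to remove equals the sum of the rim lengths, so no step may remove fewer boxes than the full rim.

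More precisely, I argue by induction that $\mu_s$ must equal the staircase with $s$ rims removed. Suppose instead that some step removes only part of the rim. The next colour $i_{s+1}$ differs, so the leftover $i_s$-boxes can never be removed later, since colours alternate and a leftover box of colour $i_s$ still blocks boxes beneath it. The shape then cannot reach the target staircase. This is the main obstacle, and it needs care at the boundary: a box of colour $i_s$ could become removable again at step $s+2$. I would handle this by a counting argument, noting that each step can remove at most the current number of removable boxes of the right colour, and the sum of these maxima is exactly $|\gamma_k\setminus w_m^{-1}\gamma_k|$. Hence every step must be maximal.

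\textbf{Step 3: read off the monomial.} At step $s$ the rim removed has $k-(s-1)$ boxes, with the indexing matching $\gamma_k$ having $k$ rows. Substituting gives $\prod_{s=1}^m p_s^{k-(s-1)}/(k-(s-1))!$. For $\Delta^{\mathrm{new}}_{\overline{k}}$ with $w=w_m$, the target is $\ow_{m-1}^{-1}\ogamma_k$. The first letter $i_1=1$ has the wrong colour for the rim of $\ogamma_k$, so step $1$ removes nothing, and the remaining $m-1$ steps strip rims of sizes $k,k-1,\dots$. This gives exponents $k-(s-2)$ for $s=2,\dots,m$, as claimed. The case $w=\ow_m$ follows by symmetry.
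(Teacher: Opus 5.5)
Your approach is the paper's: expand the minor as a sum over $\ui$-trails, show that only one trail reaches the target staircase, and read off the monomial. For $\ogamma_k$ with $w=w_m$, step $1$ is empty because the rim of $\ogamma_k$ has colour $0\neq i_1$. The paper simply asserts that the trail is unique. Your Step 2 tries to justify this, but the counting argument you settle on does not work as stated.

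The per-step bound fails on deviating trails. Let $n$ be the number of layers of the starting staircase, and take the trail with $\mu_1=\mu_2=\mu_0$. This is legal, since the full staircase has no removable $i_2$-coloured boxes. Then step $3$ can remove all $n$ top-layer boxes, whereas the stripping trail removes only $n-2$ at that step. So ``the sum of these maxima'' is not a bound valid over all trails. Your first version, that leftover $i_s$-boxes are never removed later, is also false, as you noticed.

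A delay argument along vertical chains repairs Step 2:
\begin{enumerate}[label=(\arabic*)]
\item In a staircase, every box of layer $j\geq 2$ sits directly on a box of layer $j-1$. Every box of layer $j<n$ lies directly under a box of layer $j+1$.
\item A box can be removed only after every box resting on it. Adjacent layers have different colours, so the removal steps strictly increase going down any such chain.
\item Chaining up from a box in layer $n-t$ to the top layer gives removal step $\geq t+1$.
\item Chaining down to layer $n-m+1$ gives removal step $\leq t+1$. This uses that every box of that layer must be gone by step $m$, while nothing lower may be removed because the endpoint is the $(n-m)$-layer staircase.
\end{enumerate}
Hence each layer is removed in full at exactly its own step, which is the uniqueness you need. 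For $\ogamma_k$ with $w=w_m$, the same argument runs with every step shifted by one.

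One smaller point: the exponent $k-(s-1)$ in Step 3 requires $\gamma_k$ to have $k$ layers. The paper's explicit labelling, where $s_{i_1}\cdots s_{i_k}\omega_{i_k}$ is the staircase $(k,\dots,1)$, would give $k-1$ layers and exponents $k-s$. The statement silently uses the $k$-layer convention, so fix that convention at the start rather than hedging in Step 1.
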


\begin{proof}
Suppose $w = w_m$ and let $x_{i_m}(p_m) \cdots x_{i_1}(p_1) \in L^{w^{-1}}$. For $\ogamma_k \not\in \Gamma^w$, then $k \geq m$ and the only possible $\ui$-trail is $\ow_{m-1}^{-1} \ogamma_k \subset \ow_{m-2}^{-1} \ogamma_k \subset \cdots \subset \ow_{1}^{-1}\ogamma_k \subset \ogamma_k \subseteq \ogamma_k$, since $i_1=0$. Using this $\ui$-trail, 
\begin{align*}
\Delta^{\text{new}}_{\overline{k}}(x_{i_m}(p_m) \cdots x_{i_1}(p_1))&= \prod_{s=2}^m \frac{p_s^{k-(s-2)}}{(k-(s-2))!}.
\end{align*}

For $\gamma_{k} \not\in \Gamma^w$, then $k \geq m+1$ and the only possible $\ui$-trail in this case is $w_{m}^{-1} \gamma_k \subset w_{m-1}^{-1} \gamma_k \subset \cdots \subset w_1^{-1} \gamma_k \subset \gamma_k$. Using this $\ui$-trail, 
\begin{align*}
\Delta^{\text{new}}_{k}(x_{i_m}(p_m) \cdots x_{i_1}(p_1)) &= \prod_{s=1}^m \frac{p_s^{k-(s-1)}}{(k-(s-1))!}.
\end{align*} 
By switching the roles of $0$ and $1$, the case of $w = \ow_m$ immediately follows. 
\end{proof}

Denote $M_\gamma^\text{new} := (\Delta_\gamma^\text{new})^\trop$. By tropicalizing these formulae, we have the following explicit equations for the new tropical minors. 
\begin{corollary}
Fix $w \in W$. For an arbitrary  $(P_1, \dots, P_m) \in L^{w^{-1}}(\Z^\trop)$, if $\ogamma_k \not\in \Gamma^w$, then
\begin{align*}
M^{\textup{new}}_{\overline{k}}(P_1, \dots, P_m) &= \begin{dcases} \sum_{s=2}^{m}(k-(s-2)) P_{s}, & \text{if }w=w_m \\ \sum_{s=1}^{m}(k-(s-1)) P_{s}, &\text{if }w=\ow_m 
\end{dcases}
\end{align*}
If $\gamma_k \not\in\Gamma^w$, then
\begin{align*}
M^{\textup{new}}_{k}(P_1, \dots, P_m) &= \begin{dcases} \sum_{s=1}^{m}(k-(s-1)) P_{s}, &\text{if }w=w_m \\ \sum_{s=2}^{m}(k-(s-2)) P_{s}, &\text{if }w=\ow_m
\end{dcases}
\end{align*}

\end{corollary}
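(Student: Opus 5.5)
The corollary follows by tropicalizing the explicit monomial formulae of Lemma \ref{lemma:detropicalLWpositivities}.

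By that lemma, for $\ogamma_k \notin \Gamma^w$ or $\gamma_k \notin \Gamma^w$, the function $\Delta^{\text{new}}_\gamma \circ x_{\ui}$ is a single monomial in $p_1,\dots,p_m$ with a positive rational coefficient (a product of reciprocals of factorials). Its tropicalization is therefore a linear form in the tropical coordinates, read off directly. Two general facts are used:
\begin{itemize}
\item tropicalization turns products into sums and $p_s^{e}$ into $eP_s$;
\item positive constants tropicalize to $0$.
\end{itemize}
The factorials $1/(k-(s-2))!$ and $1/(k-(s-1))!$ thus drop out, and the exponents $k-(s-2)$ and $k-(s-1)$ become the coefficients of $P_s$. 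This gives exactly the four displayed sums, with $s$ ranging over $2,\dots,m$ or $1,\dots,m$ according to which product appears in Lemma \ref{lemma:detropicalLWpositivities}.

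The only point needing care is the indexing convention. Lemma \ref{lemma:detropicalLWpositivities} is stated on $x_{i_m}(p_m)\cdots x_{i_1}(p_1)$, which is precisely $x_{\ui}(p_1,\dots,p_m)$ by the definition of $x_{\ui}$. After Corollary \ref{corollary:tropicalpoints}, the tropical point $\ell=(P_1,\dots,P_m)$ is indexed by the reduced word $\ui$ of $w$. So I must confirm that $P_s$ is the tropical variable attached to $p_s$, and that the remark on reversed indexing (relative to the coordinates of Lemma \ref{lemma:positiveminors}) is accounted for.

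I would check this against the formula for $\Delta_\gamma(x_{\ui}(p_1,\dots,p_m))$ displayed just before Lemma \ref{lemma:Mequations}. There the exponent of $p_s$ is $|\mu_{s-1}\setminus\mu_s|$, and the tropicalization (\ref{eqn:directtrop}) pairs this exponent with $P_s$. Lemma \ref{lemma:detropicalLWpositivities} computes the same kind of $\ui$-trail weights in the same chart. Hence the same identification $p_s \leftrightarrow P_s$ applies. Since $x_{\ui}$ is the chart defining $L^{w^{-1}}(\Z^\trop)$, the tropicalization is well defined with no transition functions to check.

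No real obstacle is expected. The mildest step is the bookkeeping above, together with the observation that, unlike the $M_\gamma$ of Lemma \ref{lemma:Mequations}, these functions are finite for $\gamma\notin\Gamma^w$: they are monomials rather than minima over an empty set of trails. This is exactly why the new minors were introduced. The case $w=\ow_m$ follows from $w=w_m$ by swapping the roles of $0$ and $1$, as in the lemma.
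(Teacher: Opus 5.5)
Your proposal is correct and is exactly the paper's argument: the corollary is just the tropicalization of the monomials in Lemma~\ref{lemma:detropicalLWpositivities}. The factorial constants tropicalize to $0$, and each exponent of $p_s$ becomes the coefficient of $P_s$ in the chart $x_{\ui}$. One caveat, which comes from that lemma (and is equally present in the paper) rather than from your derivation: the lemma tacitly gives $\gamma_k$ and $\ogamma_k$ a top row of $k$ boxes, whereas under the indexing of Remark~\ref{remark:gammaw} and Lemma~\ref{lemma:Mequations} the top row has $k-1$ boxes, so under those conventions each displayed coefficient should be one smaller (e.g.\ for $w=w_2$ a direct computation gives $M^{\textup{new}}_{\overline{3}}=2P_2$, not $3P_2$), and this is also what makes Corollary~\ref{corollary:LWedgeequalities} hold at the first index outside $\Gamma^w$.
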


These new generalized minors now satisfy the edge equalities.

\begin{lemma}
On $L^{w^{-1}}$, the detropical edge equalities are satisfied:
\begin{enumerate}[label=\arabic*)]
 \item If $\ogamma_{k}\not\in \Gamma^w$, then
 \begin{enumerate}[label=\alph*)]
  \item if $w = w_m$, $(\Delta^\textup{new}_{\overline{k}})^2 = \dfrac{k+1}{k-m+2}\Delta^\textup{new}_{\overline{k-1}}\Delta^\textup{new}_{\overline{k+1}}$
  \item if $w = \ow_m$, $ (\Delta^\textup{new}_{\overline{k}})^2 =  \dfrac{k+1}{k-m+1}\Delta^\textup{new}_{\overline{k-1}}\Delta^\textup{new}_{\overline{k+1}}$
 \end{enumerate}
 \item If $\gamma_k \not\in\Gamma^w$, then
 \begin{enumerate}
  \item if $w = w_m$, $(\Delta^\textup{new}_k)^2 = \dfrac{k+1}{k-m+1}\Delta^\textup{new}_{k-1} \Delta^\textup{new}_{k+1}$
  \item if $w = \ow_m$, $ (\Delta^\textup{new}_k)^2 = \dfrac{k+1}{k-m+1}\Delta^\textup{new}_{k-1} \Delta^\textup{new}_{k+1}$
 \end{enumerate}
\end{enumerate}
\end{lemma}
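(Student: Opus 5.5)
The plan is to substitute the explicit monomial formulae of Lemma \ref{lemma:detropicalLWpositivities} into both sides of each identity and compare. Since every $\Delta^{\text{new}}$ with index outside $\Gamma^w$ is a single monomial $\prod_s p_s^{e_s}/e_s!$ in the Lusztig coordinates, both sides of each claimed equality are monomials times rational constants. So two things must be checked. First, the exponent of each $p_s$ agrees on both sides. Second, the factorial constants produce exactly the stated prefactor. It suffices to verify this on the dense image of the birational map $x_{\ui}$, and then extend to all of $L^{w^{-1}}$ by continuity, or simply as an identity of rational functions.

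The exponent check is immediate from linearity. For $w=w_m$ and the $\ogamma$ family, the exponent of $p_s$ in $\Delta^{\text{new}}_{\overline{k}}$ is $e_s(k)=k-s+2$, which is affine in $k$. Hence $2e_s(k)=e_s(k-1)+e_s(k+1)$; this is just the tropical edge equality, read on the exponents. The constant is the ratio
\[
\prod_s \frac{(k-1-s+2)!\,(k+1-s+2)!}{((k-s+2)!)^2}=\prod_{s}\frac{k-s+3}{k-s+2}.
\]
This product telescopes. Running over $s=2,\dots,m$, the numerators are $k+1,k,\dots,k-m+3$ and the denominators are $k,\dots,k-m+2$. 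The ratio is therefore $\frac{k+1}{k-m+2}$, as claimed in 1a).

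The other cases follow the same pattern with the product over $s=1,\dots,m$ and exponents $k-s+1$. There the factor is $\prod_{s=1}^m \frac{k-s+2}{k-s+1}=\frac{k+1}{k-m+1}$, which gives 1b), 2a) and 2b). By the symmetry swapping $0$ and $1$, it is enough to do $w=w_m$ and quote the other.

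The main obstacle is the boundary case where $\gamma_{k-1}$ or $\ogamma_{k-1}$ lies in $\Gamma^w$, e.g. $k=m$ for $\ogamma$ when $w=w_m$. In that case $\Delta^{\text{new}}_{\overline{k-1}}=\Delta_{\overline{k-1}}$ is a genuine sum over trails (Lemma \ref{lemma:positiveminors}), not one of the monomials above. Here I would first show that there is only one $\ui$-trail from $\ogamma_{m-1}$ down to $\omega_i$ using all letters, and that it yields the same monomial formula with $k$ replaced by $k-1$. Failing that, I would note that the identity is intended only for indices where all three minors are new. I would check this carefully, together with the possibility that a factorial in the formula has a negative argument, which would signal that an exponent vanishes.
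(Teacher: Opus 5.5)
Your route is the paper's own. You substitute the monomials of Lemma~\ref{lemma:detropicalLWpositivities}, note that the exponents are affine in $k$, telescope the factorial ratio, and handle the first index outside $\Gamma^w$ by showing that the neighbouring old minor has a single $\ui$-trail. Your computations for 1a), 1b) and 2a) are correct.

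The step that fails is 2b). For $w=\ow_m$ and $\gamma_k\notin\Gamma^w$, Lemma~\ref{lemma:detropicalLWpositivities} gives $\prod_{s=2}^m p_s^{k-s+2}/(k-s+2)!$, because the first letter $s_0$ removes no box of $\gamma_k$. That is the shape of case 1a), not the $s=1,\dots,m$ shape you assign to it. The telescoping product is then $\prod_{s=2}^m\frac{k-s+3}{k-s+2}=\frac{k+1}{k-m+2}$, not $\frac{k+1}{k-m+1}$. The $0\leftrightarrow 1$ symmetry you appeal to gives the same answer: it exchanges $w_m\leftrightarrow\ow_m$ and $\gamma_k\leftrightarrow\ogamma_k$, so it pairs 2b) with 1a) and 1b) with 2a). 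Your two justifications of 2b) therefore contradict each other, and neither produces the printed constant.

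The printed $\frac{k+1}{k-m+1}$ in 2b) appears to be a misprint. The paper's proof only treats $w=w_m$ and invokes this symmetry, which yields $\frac{k+1}{k-m+2}$. You should have caught the mismatch instead of asserting the printed constant. It does no harm downstream, since the constants disappear on tropicalizing to Corollary~\ref{corollary:LWedgeequalities}.

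For the boundary case, your primary plan is what the paper does. Your fallback, restricting to indices where all three minors are new, is not available. The instance where $\Delta^{\text{new}}_{\overline{k-1}}$ (resp.\ $\Delta^{\text{new}}_{k-1}$) is still an ordinary minor is exactly the one that glues the new functions to $(M_\gamma)_{\gamma\in\Gamma^w}$. For example, when $w=w_m$, condition~\ref{condition:edgeequalities} of Lemma~\ref{lemma:BZdata} at $k=m+1$ involves $M_{\overline m}$, and $\ogamma_m\in\Gamma^w$.

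Uniqueness of the trail is easy to check directly:
\begin{itemize}
\item Every box of the boundary staircase lies on a covering chain running from a top-row box down to the bottom box, and colours alternate along that chain.
\item Each box must be removed at a strictly later step than the box covering it.
\item The letters of $\ui$, counted from the first one of the top-row colour to the end, are exactly as many as the rows of the staircase.
\end{itemize}
This forces each row to be removed in full at one prescribed step. So there is a single trail, and the resulting monomial has the same form as the new minors.
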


\begin{proof}
Without loss of generality, assume $w = w_m$. If $ \ogamma_k \not\in \Gamma^w$, then $\ogamma_{k+1} \not\in \Gamma^w$ and so Lemma \ref{lemma:detropicalLWpositivities} gives the values of $\Delta_{\overline{k}}$ and $\Delta_{\overline{k+1}}$. If $\ogamma_{k-1} \not\in \Gamma^w$, Lemma \ref{lemma:detropicalLWpositivities} determines $\Delta_{\overline{k-1}}$. Otherwise, $\ogamma_k = \ogamma_m$. As $\ui$-trails are of length at most $m$, there is exactly one $\ui$-trail from $\ogamma_m$ to $\omega_{i_m}$, and so by Lemma \ref{lemma:positiveminors}, we can show that $\Delta_{\overline{k-1}}$ will be a monomial in the $p_i$'s of the same form as in Lemma \ref{lemma:detropicalLWpositivities}. Hence 
\begin{align*}
\Delta_{\overline{k-1}}^\text{new} (x_{\ui}(p_1, \dots, p_m)) & = \prod_{s=2}^{m} \frac{p_s^{k+1-s}}{(k+1-s)!}, &
\Delta_{\overline{k}}^\text{new} (x_{\ui}(p_1, \dots, p_m)) & = \prod_{s=2}^{m} \frac{p_s^{k+2-s}}{(k+2-s)!}, \\
\Delta_{\overline{k+1}}^\text{new} (x_{\ui}(p_1, \dots, p_m)) & = \prod_{s=2}^{m} \frac{p_s^{k+3-s}}{(k+3-s)!}.
\end{align*}
Thus
\begin{align*}
\Delta_{\overline{k-1}}^\text{new} \Delta_{\overline{k+1}}^\text{new} & = \prod_{s=2}^{m} \frac{p_s^{k+1-s}}{(k+1-s)!} \cdot \prod_{t=2}^{m} \frac{p_t^{k+3-t}}{(k+3-t)!}\\
& = \prod_{s=3}^{m+1} \frac{1}{(k+2-s)!} \cdot \prod_{t=1}^{m-1} \frac{1}{(k+2-t)!} \left( \prod_{u=2}^m p_s^{2k+4-2u} \right)\\
& = \frac{k!}{(k+2 - (m+1))!} \left(\prod_{s=2}^{m} \frac{1}{(k+2-s)!}\right) \frac{(k+2 -m)!}{(k+1)!} \left( \prod_{s=2}^{m} \frac{1}{(k+2-s)!}\right) \prod_{u=2}^m p_s^{2k+4-2u} \\
& = \frac{k+2 -m}{k+1}\left( \prod_{s=2}^{m} \frac{1}{(k+2-s)!}p_s^{k+2-s}\right)^2  = \frac{k+2 -m}{k+1}\left( \Delta_{\overline{k}}^\text{new}\right)^2
\end{align*}
If $\gamma_k \not\in \Gamma^w$, then by a similar argument, the generalized minors are given by
\begin{align*}
\Delta_{k-1}^\text{new} (x_{\ui}(p_1, \dots, p_m)) & = \prod_{s=1}^{m} \frac{p_s^{k-s}}{(k-s)!}, &
\Delta_k^\text{new} (x_{\ui}(p_1, \dots, p_m)) & = \prod_{s=1}^{m} \frac{p_s^{k+1-s}}{(k+1-s)!},\\
\Delta_{k+1}^\text{new} (x_{\ui}(p_1, \dots, p_m)) & = \prod_{s=1}^{m} \frac{p_s^{k+2-s}}{(k+2-s)!}.
\end{align*}
Thus
\begin{align*}
\Delta_{k-1}^\text{new} \Delta_{k+1}^\text{new} & = \prod_{s=1}^{m} \frac{p_s^{k-s}}{(k-s)!} \prod_{t=1}^{m} \frac{p_t^{k+2-t}}{(k+2-t)!}\\
& = \prod_{s=2}^{m+1} \frac{1}{(k+1-s)!} \prod_{t=0}^{m-1} \frac{1}{(k+1-t)!}\left(\prod_{u=1}^m p_u^{2k+2-2u}\right)\\
& = \frac{k!}{(k+1 - (m+1))!} \left(\prod_{s=1}^{m} \frac{1}{(k+1-s)!}\right) \frac{(k+1 - m)!}{(k+1)!}\left(\prod_{t=1}^{m} \frac{1}{(k+1-t)!}\right)\prod_{u=1}^m p_u^{2k+2-2u}\\
& =\frac{k+1 - m}{k+1} \left(\prod_{s=1}^{m} \frac{1}{(k+1-s)!}p_u^{k+1-s} \right)^2 =\frac{k+1-m}{k+1} \left(\Delta_k^\text{new} \right)^2
\end{align*}
\end{proof}

Define the set $(M_\gamma)_{\gamma \in \Gamma}$ by $M_\gamma = \Delta_\gamma^\trop$ if $\gamma \in \Gamma^w$ and $M_\gamma = \left(\Delta_\gamma^\text{new}\right)^\trop$ if $\gamma \in \Gamma \setminus \Gamma^w$. By tropicalizing these equations, we immediately show the edge equalities hold.

\begin{corollary}\label{corollary:LWedgeequalities}
The collection $(M_\gamma)_{\gamma \in \Gamma}$ satisfies the edge equalities \ref{condition:edgeequalities} in Lemma \ref{lemma:BZdata} on $L^{w^{-1}}$. More explicitly, for $\ell \in L^{w^{-1}}(\Z^\trop)$, 
\begin{enumerate}[label=(\roman*)]
 \item if $\ogamma_{k} \not\in \Gamma^w$, then  $2M_{\overline{k}}(\ell) = M_{\overline{k-1}}(\ell) + M_{\overline{k+1}}(\ell)$
 \item if $\gamma_{k} \not\in \Gamma_w$, then $ 2M_{k}(\ell) = M_{k-1}(\ell) + M_{k+1}(\ell)$
\end{enumerate}
\end{corollary}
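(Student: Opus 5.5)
The plan is to tropicalize the detropical edge equalities of the preceding lemma. For $\ogamma_k \not\in \Gamma^w$ (and symmetrically for $\gamma_k$), that lemma gives an identity of the form
\[
(\Delta^\textup{new}_{\overline{k}})^2 = c_k\, \Delta^\textup{new}_{\overline{k-1}}\Delta^\textup{new}_{\overline{k+1}},
\]
where $c_k$ is a positive rational constant depending only on $k$ and $m$. Every function appearing here is a subtraction-free expression in the Lusztig coordinates $x_{\ui}(p_1,\dots,p_m)$. The minors $\Delta^\textup{new}$ with $\gamma\not\in\Gamma^w$ are single monomials by Lemma \ref{lemma:detropicalLWpositivities}, and the boundary minors are positive by Lemma \ref{lemma:positiveminors}. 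Tropicalization is a homomorphism from positive functions under multiplication to $\Z$-valued piecewise-linear functions under addition, and positive constants tropicalize to $0$. Applying $(\cdot)^\trop$ to both sides therefore gives
\[
2M_{\overline{k}}(\ell) = M_{\overline{k-1}}(\ell)+M_{\overline{k+1}}(\ell)
\]
for every $\ell\in L^{w^{-1}}(\Z^\trop)$, and likewise for the unbarred data.

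The one point needing care is that $M_\gamma$ is defined piecewise: it is $\Delta_\gamma^\trop$ on $\Gamma^w$ and $(\Delta_\gamma^\textup{new})^\trop$ off it. The neighbours $\gamma_{k\pm1}$ of a $\gamma_k\not\in\Gamma^w$ could in principle lie in $\Gamma^w$. For $\gamma_{k+1}$ this cannot happen, since Remark \ref{remark:gammaw} shows that $\Gamma\setminus\Gamma^w$ is upward closed in $k$ on each side. For $\gamma_{k-1}$, if $\gamma_{k-1}\in\Gamma^w$ then $\Delta^\textup{new}_{\gamma_{k-1}}=\Delta_{\gamma_{k-1}}$ by the note after the definition of $u_w$. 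The proof of the preceding lemma already treats this boundary case, where $\gamma_{k-1}$ is the top element of $\Gamma^w$ on that side. There the minor is a single monomial because there is exactly one $\ui$-trail of maximal length, which matches Lemma \ref{lemma:Mequations} with a unique valid sequence. So the identity being tropicalized uses exactly the functions that define $M_\gamma$.

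I expect no real obstacle, since the result is an immediate corollary. The main thing to verify is that the index ranges in Remark \ref{remark:gammaw} match the hypotheses of the preceding lemma for both $w=w_m$ and $w=\ow_m$, including the edge index where $\gamma_{k-1}$ sits on the boundary of $\Gamma^w$.
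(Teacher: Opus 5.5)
Your proposal is correct and is essentially the paper's proof: tropicalize the detropical edge equalities of the preceding lemma, with the positive constants tropicalizing to $0$. Your extra check on a boundary neighbour $\gamma_{k-1}\in\Gamma^w$ (where $\Delta^{\textup{new}}=\Delta$ is the single monomial coming from the unique $\ui$-trail) is exactly the case the paper handles inside the proof of that lemma, and the index check you flag does go through: the paper's formulas there are shifted by one relative to Remark~\ref{remark:gammaw}, but after reindexing the boundary monomial agrees with the $\Delta^{\textup{new}}$ formula.
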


Finally, we can construct a bijection between the set of lower affine MV polytopes and the non-negative tropical points with respect to $\tau$ of the reduced double Bruhat cell. 

\begin{theorem}\label{theorem:tropicalpointsL^w}
For $w \in W$, the map $L^{w^{-1}}(\Z^\trop)_{\geq} \longrightarrow \Pw$ defined by 
\[
\ell \mapsto (M_\gamma(\ell))_{\gamma \in \Gamma}
\]
is a bijection. 
\end{theorem}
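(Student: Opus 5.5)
The proof splits into well-definedness of the map and bijectivity. For well-definedness, the map $\ell \mapsto (M_\gamma(\ell))_{\gamma\in\Gamma}$ must land in $\mathcal{M}^w_\Gamma$, and then Theorem \ref{theorem:BZ} identifies $\mathcal{M}^w_\Gamma$ with $\Pw$. So for $\ell \in L^{w^{-1}}(\Z^\trop)^\tau_\geq$ I will check the four conditions of Lemma \ref{lemma:BZdata} in turn:
\begin{itemize}
\item integrality holds since each $M_\gamma$ is a tropicalized positive function of integer coordinates;
\item condition \ref{condition:positivity} is Lemma \ref{lemma:LWpositivity};
\item conditions \ref{condition:edgeequalities} and \ref{condition:edgeequalities2} follow from Corollary \ref{corollary:LWedgeequalities};
\item condition \ref{condition:diagonal} follows from Lemma \ref{lemma:LWdiagonals}.
\end{itemize}

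The diagonal condition needs care. Lemma \ref{lemma:LWdiagonals} was proved for the original $M_\gamma$, which are $\infty$ outside $\Gamma^w$, whereas the datum now uses $M^{\text{new}}_\gamma$ there. I will handle this with the induction in the proof of Lemma \ref{lemma:BZdata}. The diagonals for $k \le m$ involve only $\gamma\in\Gamma^w$, except for the single $\infty$ term at $k=m$. There, the proof of Lemma \ref{lemma:LWdiagonals} already showed that the finite term equals $mM_m+mM_{\overline m}$. Replacing $\infty$ by the new finite value $M^{\text{new}}_{\overline{m+1}}$ therefore only requires that this term is $\ge mM_m+mM_{\overline m}$, which I will verify directly from the explicit corollary formulas. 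Once the diagonals hold up to $k=m$ and the edge equalities hold for $k\ge m+1$, the inductive argument in Lemma \ref{lemma:BZdata} propagates the diagonals to all $k$.

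For injectivity, I will recover $\ell=(P_1,\dots,P_m)$ from the data. Via the coordinates $\phi$ of Lemma \ref{lemma:nonnegativetropical}, the tropical point $\ell$ is determined by the values $M_{w_k^{\ui}\omega_{i_k}}(\ell)$, $1\le k\le m$. These are among the $M_\gamma$ with $\gamma \in \Gamma^w$, namely the chamber weights along the reduced word ($\gamma_1,\gamma_2,\dots$ when $w=w_m$). So distinct $\ell$ give distinct BZ data.

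For surjectivity, take $P\in\Pw$ with BZ datum $(M_\gamma)$. The $\phi$-coordinates $a_k=2M_{k}-M_{k-1}-M_{k+1}$ for $1\le k\le m$ are non-negative integers by condition \ref{condition:positivity}, so Corollary \ref{lemma:tropicallusztig} gives a unique $\ell\in L^{w^{-1}}(\Z^\trop)^\tau_\geq$ with $M_{\gamma_k}(\ell)=M_{\gamma_k}$ for $k\le m+1$; the normalization $M_{\omega_i}=0$ comes from $\Delta_{\omega_i}=1$. By the first part, $(M_\gamma(\ell))$ is also in $\mathcal{M}^w_\Gamma$. It remains to show that a datum in $\mathcal{M}^w_\Gamma$ is determined by the one-sided values $M_{\gamma_1},\dots,M_{\gamma_{m+1}}$.

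This uniqueness step is the main obstacle. The approach I will try first is the Lusztig-data version of the lemma on $\bar a_1$ (equation (\ref{equation:bara1})), which gives $\bar a_1$ from the right data when $\lambda=0$ and $a^k=0$; both hold for lower polytopes by Lemma \ref{lemma:mu_m-1=mu^1}. The remaining $\bar a_k$ then follow inductively: the active diagonal at each level $k$ is linear in $\bar a_k$ given the previous values, so each $\bar a_k$ is forced. Equivalently, by Theorem [Rank2affine 3.11], the right Lusztig data $(a_n,0,0)$ determine the polytope uniquely. Hence both data agree and the map is a bijection.
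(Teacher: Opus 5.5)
Your proposal is correct and is essentially the paper's proof: well-definedness comes from checking the conditions of Lemma~\ref{lemma:BZdata} and applying Theorem~\ref{theorem:BZ}, and bijectivity from composing $P\mapsto(a_1,\dots,a_m)$ with $(\phi^{\trop})^{-1}$, where you supply two things the paper leaves implicit, namely the injectivity of $P\mapsto(a_1,\dots,a_m)$ (from \cite[Theorem 3.11]{Rank2affine}, since for $w=w_m$ the right Lusztig data of $P\in\Pw$ are $(a_n,0,0)$) and the $k=m$ diagonal with the now finite $M_{\overline{m+1}}$, which Lemma~\ref{lemma:LWdiagonals} does not cover. Two refinements: that $k=m$ check is not a formal consequence of the explicit formulas alone, because it fails off the non-negative region and because the corollary's formulas for $M^{\text{new}}_\gamma$ are shifted by one in $k$ relative to Lemma~\ref{lemma:Mequations}, so you should recompute them; it does follow from $\bar a_m=0$, $a_{m-1}\ge 0$ and the $(m-1)$st diagonal via $\langle\omu_m-\mu_{m-1},\omega_1\rangle=\langle\omu_{m-1}-\mu_{m-2},\omega_1\rangle+(m-1)(\bar a_m-a_{m-1})$; and you should base uniqueness on the cited theorem rather than on level-by-level forcing of $\bar a_k$, since when $\langle\mu_k-\omu_{k-1},\omega_0\rangle=0$ is the active term the $k$th diagonal only bounds $\bar a_k$ from above.
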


\begin{proof}
Consider the functions $(M_\gamma)_{\gamma \in \Gamma}$. For $\gamma \in \Gamma^w$, these functions satisfy the diagonal relations by Lemma \ref{lemma:LWdiagonals} and the edge inequalities by Corollary \ref{corollary:LWedgeequalities}. For $\gamma \in \Gamma \setminus \Gamma^w$, these functions satisfy the edge equalities by Corollary \ref{corollary:LWedgeequalities}. Hence, for any $\ell \in L^{w^{-1}}(\Z^\trop)$, $(M_\gamma(\ell))_{\gamma \in \Gamma}$ will satisfy the conditions \ref{condition:positivity}  - \ref{condition:diagonal}  in Lemma \ref{lemma:BZdata}. Thus by Theorem \ref{theorem:BZ}, $(M_\gamma(\ell))_{\gamma \in \Gamma}$ is the BZ datum of a lower affine MV polytope of highest vertex $w$ and this map is well-defined. 

To show this is in fact a bijection, consider the injective map which sends the BZ data $(M_\gamma(\ell))_{\gamma \in \Gamma}$ to $\left(2M_{s_{i_1}\cdots s_{i_{k}}\omega_{i_k}}(\ell) - M_{s_{i_1} \cdots s_{i_{k+1}} \omega_{i_{k+1}}}(\ell) - M_{s_{i_1} \cdots s_{i_{k -1}} \omega_{i_{k-1}}}(\ell)\right)_{k=1}^m$. By composing this map with the bijection in Lemma \ref{lemma:nonnegativetropical}, there is a map which sends $(M_\gamma(\ell))_{\gamma \in \Gamma} \mapsto \ell$. Thus the map $\Pw \rightarrow L^{w^{-1}}(\Z^\trop)_\geq^\tau$ is a bijection.
\end{proof}

\begin{remark}
When $G$ is a general affine Kac-Moody group, we expect a similar theorem to hold. 

One crucial component to this theory would be the existence of a positive structure on $L^{w^{-1}}$. In the rank 2 case, the positive structure is trivial as there are no transition maps. For the general case, we would need to know that the transition functions $x_{\ui} \circ x_{\uj}^{-1}$ are subtraction free for two different reduced words $\ui$ and $\uj$ of $w$. 
\end{remark}

\begingroup
\renewcommand{\addcontentsline}[3]{}
\subsection*{Acknowledgments}

Most of this work was completed as part of my PhD thesis. As such, I would like to thank my advisor, Joel Kamnitzer, for his guidance and suggestions. I would like to thank Jiuzu Hong, Florian Herzig, Marco Gualtieri, Lisa Jeffrey and Peter Tingley for their suggestions and corrections. I would also like to thank Dinakar Muthiah for helpful conversations. 

During this work, I was partially supported by an NSERC graduate scholarship. 

\endgroup

\bibliographystyle{alpha}
\bibliography{bibliography.bib}  
 
\end{document}